\DeclareFontFamily{U}{mathb}{\hyphenchar\font45}
\DeclareFontShape{U}{mathb}{m}{n}{
      <5> <6> <7> <8> <9> <10> gen * mathb
      <10.95> mathb10 <12> <14.4> <17.28> <20.74> <24.88> mathb12
      }{}
\DeclareSymbolFont{mathb}{U}{mathb}{m}{n}
\DeclareMathSymbol{\boxvoid}      {2}{mathb}{"6C}
\renewcommand{\PrintDOI}[1]{%
  \href{http://dx.doi.org/#1}{{\tt DOI:#1}}%
}
\renewcommand{\eprint}[1]{#1}
\numberwithin{equation}{section}
\theoremstyle{plain}
\newtheorem{theorem}{Theorem}[section]
\newtheorem{proposition}[theorem]{Proposition}
\newtheorem{lemma}[theorem]{Lemma}
\newtheorem{corollary}[theorem]{Corollary}
\theoremstyle{definition}
\newtheorem{definition}[theorem]{Definition}
\newtheorem{example}[theorem]{Example}
\theoremstyle{remark}
\newtheorem{remark}[theorem]{Remark}
\mathchardef\mhyph="2D
\newcommand{\bp}{\begin{proof}}
\newcommand{\ep}{\end{proof}}
\newcommand{\eps}{\varepsilon}
\newcommand{\C}{\mathbb{C}}
\newcommand{\Z}{\mathbb{Z}}
\newcommand{\R}{\mathbb{R}}
\newcommand{\co}{\mathrm{co}}
\newcommand{\cat}[1]{\mathcal{#1}}
\newcommand{\A}{\cat{A}}
\newcommand{\CC}{\cat{C}}
\newcommand{\PP}{\cat{P}}
\newcommand{\DD}{\cat{D}}
\newcommand{\tCC}{\tilde{\CC}}
\newcommand{\tDD}{\tilde{\DD}}
\newcommand{\un}{\mathds{1}}
\newcommand{\IdSet}{\mathfrak{I}}
\newcommand{\Pol}{\mathcal{O}}
\newcommand{\GL}{\mathrm{GL}}
\newcommand{\SL}{\mathrm{SL}}
\DeclareMathOperator{\Aut}{Aut}
\DeclareMathOperator{\Ch}{Ch}
\DeclareMathOperator{\Comod}{Comod}
\DeclareMathOperator{\Corep}{Corep}
\DeclareMathOperator{\End}{End}
\DeclareMathOperator{\id}{id}
\DeclareMathOperator{\Id}{Id}
\DeclareMathOperator{\Irr}{Irr}
\DeclareMathOperator{\Rep}{Rep}
\DeclareMathOperator{\tr}{tr}
\begin{document}

\title[Graded twisting]{Graded twisting of comodule algebras and module categories}

\author[J. Bichon]{Julien Bichon}
\email{Julien.Bichon@math.univ-bpclermont.fr}
\address{Laboratoire de Math\'{e}matiques, Universit\'{e} Blaise Pascal, Campus universitaire des C\'{e}zeaux, 3 place Vasarely, 63178 Aubi\`{e}re Cedex, France}

\author[S. Neshveyev]{Sergey Neshveyev}
\email{sergeyn@math.uio.no}
\address{Department of Mathematics, University of Oslo, P.O. Box 1053
Blindern, NO-0316 Oslo, Norway}
\thanks{The research leading to these results has received funding
from the European Research Council under the European Union's Seventh
Framework Programme (FP/2007-2013)
/ ERC Grant Agreement no. 307663
}

\author[M. Yamashita]{Makoto Yamashita}
\email{yamashita.makoto@ocha.ac.jp}
\address{Department of Mathematics, Ochanomizu University, Otsuka
2-1-1, Bunkyo, 112-8610 Tokyo, Japan}
\thanks{Supported by JSPS KAKENHI Grant Number 25800058}

\date{April 11, 2016; new version January 22, 2017; minor revision June 15, 2017}                                           

\begin{abstract}
Continuing our previous work on graded twisting of Hopf algebras and monoidal categories, we introduce a graded twisting construction for equivariant comodule algebras and module categories. As an example we study actions of quantum subgroups of $G\subset\SL_{-1}(2)$ on $K_{-1}[x,y]$ and show that in most cases the corresponding invariant rings $K_{-1}[x,y]^G$ are invariant rings  $K[x,y]^{G'}$ for the action of a classical subgroup $G'\subset \SL(2)$. As another example we study Poisson boundaries of graded twisted categories and show that under the assumption of weak amenability they are graded twistings of the Poisson boundaries.
\end{abstract}

\maketitle

\section*{Introduction}

Let $A$, $B$ be Hopf algebras, and assume that $B$ is a $2$-cocycle twisting of $A$. Then we have a monoidal equivalence between the categories of left comodules
$$\mathcal F\colon \Comod(A) \simeq^\otimes \Comod(B)$$
It is a simple observation  that $\mathcal F$ induces an equivalence between the respective categories of comodule algebras, and that if $R$ is an $A$-comodule algebra,  we have an isomorphism between the fixed point algebras
$$^{\co B}\!\mathcal F(R) \simeq {^{\co A}\!R}$$
In other words, the invariant theory of $A$ determines that of $B$.

One of the main goals of this paper is to give a version of this   principle when $B$ is a \textsl{graded twisting} of $A$~\cite{MR3580173}. In this case there is also an  equivalence  $\Comod(A) \simeq \Comod(B)$, but it is not monoidal in general, so we cannot expect  a correspondence between the categories of comodule algebras. We will show that nevertheless for $A$-comodule algebras with some additional structure we do get $B$-comodule algebras, and this construction preserves the fixed point algebras. We also develop a categorical version of this construction and consider several examples. Specifically, the contents and main results of the paper are as follows.

\smallskip

Section~\ref{sec:preliminaries} consists of recollections and preliminaries on graded twistings, both in Hopf algebraic and categorical settings. Here we also point out that the underlying product of the graded twisting of a Hopf algebra is a particular case of the twisted product from \cite{MR1367080}. In fact, some of our basic observations easily extend to the setting of \cite{MR1367080}, and we indicate this in several remarks.

\smallskip

In Section~\ref{sec:twist-groups-acti} we study quotients of graded twistings of commutative Hopf algebras of functions on affine algebraic groups, giving a complete description of such noncommutative quotients when the group $\Gamma$ defining the grading is finite of prime order. This complements our results on graded twistings of compact groups in~\cite{MR3580173}, but while in~\cite{MR3580173} we used C$^*$-algebraic tools, here the proofs are purely Hopf algebraic.

\smallskip

In Section~\ref{sec:twist-comod-algebr} we introduce graded twisting of $\Gamma$-equivariant comodule algebras. When $\Gamma$ is abelian we show that this construction defines an equivalence between the categories of such algebras. As an example we study actions of quantum subgroups of $\SL_{-1}(2)$ on $K_{-1}[x,y]$, a situation that cannot be covered by the $2$-cocycle twisting framework. Our results are refinements of some of those in \cite{arXiv:1303.7203}, where one of the conclusions is that the invariant rings $K_{-1}[x,y]^G$ for actions of finite quantum subgroups $G \subset \SL_{-1}(2)$ share similar homological properties with invariant rings $K[x,y]^G$ for classical finite group actions: indeed, using the results of Section~\ref{sec:twist-groups-acti}, we show that in most cases these invariant rings $K_{-1}[x,y]^G$ are in fact invariant rings  $K[x,y]^{G'}$ for the action of a classical subgroup $G'\subset \SL(2)$.

\smallskip

In Section~\ref{sec:categorical-picture} we define a categorical counterpart of the construction of Section~\ref{sec:twist-comod-algebr}, the graded twisting of equivariant module categories. In the categorical setting the construction is actually almost tautological, but we show that it is indeed the right analogue of that in Section~\ref{sec:twist-comod-algebr}, the two being related to each other by a Tannaka--Krein type duality result.

\smallskip

In Section~\ref{sec:poisson-boundary} we study the graded twistings of module categories provided by the Poisson boundary theory. In fact, these are more than just module categories, as the module category structure is defined by a tensor functor, so in principle such a study could have been carried out already in~\cite{MR3580173}. Our main result here is that under the assumption of weak amenability the constructions of Poisson boundaries and graded twistings commute (up to equivalence).

\smallskip
\paragraph{\bf Acknowledgement} We would like to thank C{\'e}sar Galindo for comments on the first version of the paper, and the referee for having drawn our attention to the references \cites{MR1367080,MR1386496} and for his careful reading which helped us improve the presentation.

\section{Preliminaries}
\label{sec:preliminaries}

We mainly follow the conventions of~\cite{MR3580173}. Thus, $K$ stands for a field of characteristic zero, $\Gamma$ is a discrete group, and $A$ is a Hopf algebra. All algebras, linear categories, vector spaces and so on are considered over $K$. The group algebra of $\Gamma$ (over $K$) is denoted by $K \Gamma$. When we talk about $*$-algebras, it is always assumed that $K = \C$.  We also assume that `the' category of $K$-vector spaces has a strict tensor product. We denote the category of left $A$-comodules by $\Comod(A)$. Identity maps and identity morphisms are tacitly denoted by $\iota$.


\subsection{Graded Hopf algebras}\label{subsec:graded}

Our starting point is the following structure on $A$, which can be defined in two equivalent ways~\cite{MR3580173}:
\begin{enumerate}
\item \label{it:gr-cond-1}
a  \emph{cocentral} Hopf algebra homomorphism $p\colon A \rightarrow K\Gamma$, i.e., $p(a_{(1)})\otimes a_{(2)}=p(a_{(2)})\otimes a_{(1)}$ for all $a \in A$;
\item \label{it:gr-cond-2}
a direct sum decomposition $A=\bigoplus_{g \in \Gamma} A_g$ such that $A_gA_h\subset A_{gh}$ and $\Delta(A_g)\subset A_{g}\otimes A_g$  for all $g,h \in \Gamma$.
\end{enumerate}
Namely, given~\eqref{it:gr-cond-1}, the grading is defined by
$$
A_g=\{a \in A \ | p(a_{(1)})\otimes a_{(2)}=g \otimes a \}=\{a \in A \ | a_{(1)}\otimes p(a_{(2)})=a \otimes g \},
$$
while if~\eqref{it:gr-cond-2} is given, the map $p$ is defined by $p(a)=\varepsilon(a)g$ for every $a \in A_g$. Note that we always have  $1 \in A_e$ and $S(A_g) \subset A_{g^{-1}}$ for the antipode $S$. In particular, $A_e$ is a Hopf subalgebra of $A$. The map $p$ is surjective if and only if $A_g \not = \{0\}$ for every $g \in \Gamma$.

This structure implies a number of properties of $A$. To formulate a precise result let us first recall the following definition.

\begin{definition}[\citelist{\cite{MR1228767}\cite{MR1334152}}]
A sequence  of Hopf algebra homomorphisms
\begin{equation*}
K \to B \overset{i}\to A \overset{p}\to L \to
K\end{equation*}
is said to be \emph{exact} if the following
conditions hold:
\begin{enumerate}
\item $i$ is injective, $p$ is surjective, and $p i(b) = \varepsilon(b) 1$, for every $b \in B$;
\item $\ker p = A i(B)^+ =i(B)^+ A$, where $i(B)^+=i(B)\cap\ker(\varepsilon)$;
\item the image of $i$ is equal to the coinvariants of $L$, that is,
$$
i(B) = A^{{\rm co} L} = \{ a \in A \mid (\id \otimes p)\Delta(a) = a \otimes 1 \} = {^{{\rm co} L}A} = \{ a \in A \mid (p \otimes \id)\Delta(a) = 1 \otimes a \}.
$$
\end{enumerate}
\end{definition}
Note that the condition $p i = \varepsilon(\cdot) 1$ follows from the other ones.

\begin{proposition}\label{pbasic}
Assume we are given a surjective cocentral Hopf algebra homomorphism $p\colon A \rightarrow K\Gamma$. Then
\begin{enumerate}
\item the grading on $A$ is strong, i.e., $A_gA_h=A_{gh}$ for all $g,h\in\Gamma$; we also have $A^+_gA_h=A_gA_h^+=A^+_{gh}$;
\item $A_g$ is a finitely generated projective left and right $A_e$-module for every $g\in\Gamma$;
\item $A$ is a faithfully flat left and right $A_e$-module, as well as a faithfully coflat left and right $K\Gamma$-comodule;
\item we have an exact sequence of Hopf algebras $K \to A_e \to A \to K \Gamma \to K$.
\end{enumerate}
\end{proposition}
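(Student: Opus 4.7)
The plan is to prove the four parts in order, with everything resting on the antipode axiom applied to graded components.

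For (i), surjectivity of $p$ ensures $A_g\neq 0$ and $\varepsilon\colon A_g\to K$ is onto. Picking $a\in A_g$ with $\varepsilon(a)=1$, the antipode identity $a_{(1)}S(a_{(2)})=1$ together with $a_{(1)}\in A_g$, $S(a_{(2)})\in A_{g^{-1}}$ exhibits $1\in A_g A_{g^{-1}}$, and symmetrically $1\in A_{g^{-1}}A_g$; sandwiching $A_{gh}=1\cdot A_{gh}\subset A_g A_{g^{-1}}A_{gh}\subset A_g A_h$ yields the strong grading. For the refinement, the key point is $A_g^+\subset A_g A_e^+$: pick $a\in A_{g^{-1}}$ with $\varepsilon(a)=1$ and use $1=S(a_{(1)})a_{(2)}$, where $S(a_{(1)})\in A_g$ and $a_{(2)}\in A_{g^{-1}}$. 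Then for $x\in A_g^+$ the tensor $\sum S(a_{(1)})\otimes a_{(2)}x$ lies in $A_g\otimes A_e$ (because $a_{(2)}x\in A_{g^{-1}}A_g=A_e$), multiplies to $x$, and maps under $\id\otimes\varepsilon$ to $S(a)\varepsilon(x)=0$; hence it lies in $A_g\otimes A_e^+$, so $x\in A_g A_e^+$. A symmetric Sweedler computation gives $A_g^+\subset A_e^+A_g$. Combined with the strong grading, one then reads off $A_g^+A_h=(A_e^+A_g)A_h=A_e^+A_{gh}=A_{gh}^+=A_{gh}A_e^+=A_g A_h^+$.

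For (ii), any factorization $1=\sum_i u_iv_i\in A_g A_{g^{-1}}$ furnishes a dual basis: each $x\in A_g$ equals $\sum_i(xv_i)u_i$ with $xv_i\in A_e$, so the left $A_e$-linear surjection $A_e^n\to A_g$, $(c_i)\mapsto\sum c_iu_i$, is split by $x\mapsto(xv_i)_i$, making $A_g$ a direct summand of $A_e^n$; the right module statement is symmetric. Item (iii) then follows formally: $A=\bigoplus_g A_g$ is projective, hence flat, over $A_e$ on both sides, and $A_e$ is a direct summand of $A$, giving faithfulness. Because $K\Gamma$ is cosemisimple, every $K\Gamma$-comodule is coflat, and faithfulness of the coaction on $A$ reduces to each simple $Kg$ embedding into $A$, which holds since the coaction on any nonzero $a\in A_g$ is $a\mapsto g\otimes a$, so every one-dimensional subspace of $A_g$ is a copy of $Kg$.

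For (iv), the equality $A_e=A^{\co K\Gamma}={}^{\co K\Gamma}A$ comes from the direct computation $(\id\otimes p)\Delta(a)=a\otimes g$ for $a\in A_g$, which equals $a\otimes 1$ iff $g=e$. The kernel identity $\ker p=\bigoplus_g A_g^+=\bigoplus_g A_g A_e^+=AA_e^+$ (and symmetrically $\ker p=A_e^+A$) is immediate from the augmented strong grading in (i). These, together with $p\circ i=\varepsilon(\cdot)1$ which is automatic from the definition of $p$, verify exactness. The main obstacle in the whole proof is really the augmented identity $A_g^+=A_g A_e^+$: everything else is either standard strongly graded ring theory or a direct bookkeeping check, but this step genuinely requires the Sweedler/antipode trick above.
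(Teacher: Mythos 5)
Your overall strategy coincides with the paper's: the antipode/Sweedler trick for the strong grading and its augmented refinement, the dual basis lemma for (ii), cosemisimplicity of $K\Gamma$ for coflatness, and the reduction of (iv) to $\ker p=AA_e^+=A_e^+A$. Parts (i), (iii) and (iv) are correct. Your route to $A_g^+A_h=A_gA_h^+=A_{gh}^+$ goes through the intermediate identity $A_g^+=A_gA_e^+=A_e^+A_g$, whereas the paper observes that the very same computation $a=\sum (ab_{(1)})S(b_{(2)})$ proving $A_{gh}=A_gA_h$ already lands in $A_g^+A_h$ when $\varepsilon(a)=0$; both are fine, and yours has the small advantage of producing exactly the identity needed again in (iv).

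One step in (ii) fails as written. From a factorization $1=\sum_iu_iv_i$ with $u_i\in A_g$ and $v_i\in A_{g^{-1}}$ you cannot conclude $x=\sum_i(xv_i)u_i$ for $x\in A_g$: that sum equals $x\bigl(\sum_iv_iu_i\bigr)$, and in a noncommutative ring $\sum_iv_iu_i$ need not equal $1$ just because $\sum_iu_iv_i$ does. The repair is immediate: part (i) equally gives $1\in A_e=A_{g^{-1}}A_g$, so choose instead a factorization $1=\sum_iv_iu_i$ with $v_i\in A_{g^{-1}}$, $u_i\in A_g$ (this is what the paper does); then $x=\sum_i(xv_i)u_i$ holds, and the left $A_e$-linear maps $x\mapsto(xv_i)_i$ and $(c_i)_i\mapsto\sum_ic_iu_i$ exhibit $A_g$ as a direct summand of $A_e^n$. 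With that one-line correction the argument is complete and matches the paper's.
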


\bp

(i) Let us fix $g,h \in \Gamma$. Since $A_{h^{-1}}$ is a nonzero subcoalgebra of $A$, we can take $b\in A_{h^{-1}}$ such that $\eps(b)=1$. Then for each $a\in A_{gh}$ we have
$$
a=ab_{(1)}S(b_{(2)})\in A_g A_h.
$$
The same argument shows that if $a\in A^+_{gh}$, then $a\in A^+_gA_h$. Similarly one checks that $A^+_{gh}=A_gA^+_h$.

\smallskip

(ii) This is a general property of strong gradings: if we choose $x_i\in A_g$ and $y_i\in A_{g^{-1}}$ such that $\sum^n_{i=1}y_ix_i=1$, then we can define a left $A_e$-module map $A_g\to A^n_e$ by $a\mapsto (ay_i)^n_{i=1}$ and its left inverse $A_e^n\to A_g$ by $(a_i)^n_{i=1}\mapsto\sum_ia_ix_i$.

\smallskip

(iii) The faithful flatness follows immediately from (ii). The faithful coflatness follows from the cosemisimplicity of $K\Gamma$, but is  also easy to check directly, since if $V$ is a left $K\Gamma$-comodule, then the cotensor product $A \boxvoid_{K\Gamma} V$ is equal to $\bigoplus_g A_g \otimes V_g$ with respect to the induced $\Gamma$-gradings.

\smallskip

(iv) We only have to check that $\ker p=AA_e^+=A_e^+A$. As $\ker p=\bigoplus_g A_g^+$, this follows from~(i).
\ep

\subsection{Graded twisting of Hopf algebras}

Let us now recall the graded twisting construction of Hopf algebras introduced in~\cite{MR3580173}.

\begin{definition}[\cite{MR3580173}]
An \emph{invariant cocentral action} of $\Gamma$ on $A$ is a pair $(p, \alpha)$ where
\begin{itemize}
\item $p\colon A \rightarrow K\Gamma$ is a cocentral Hopf algebra homomorphism,
\item $\alpha\colon \Gamma \rightarrow \Aut(A)$ is an action of $\Gamma$ by Hopf algebra automorphisms on $A$, with $p\alpha_g=p$ for all $g \in \Gamma$.
\end{itemize}
In terms of the $\Gamma$-gradings, the last condition is equivalent to $\alpha_g(A_h)= A_h$ for all $g,h \in \Gamma$. When $(p,\alpha)$ is such an action, the \emph{graded twisting} $A^{t,\alpha}$ of $A$ is the Hopf algebra
$$
A^{t, \alpha}=\sum_{g \in \Gamma} A_g \otimes g\subset A \rtimes \Gamma,
$$
equipped with the algebra structure of the crossed product $A \rtimes\Gamma$, which we identify with  $A \otimes K\Gamma$ as a linear space, and with the coalgebra structure induced by the linear isomorphism
\begin{equation}\label{eq:def-of-j}
 j\colon A \to A ^{t,\alpha}, \quad \sum_g a_g \mapsto \sum_g a_g \otimes g.
\end{equation}
\end{definition}

Note that $\tilde{p}= p \otimes \varepsilon\colon A^{t,\alpha}\to K\Gamma$ is a cocentral Hopf algebra homomorphism, which is surjective when $p$ is. However, the maps $\tilde\alpha_g=(\alpha_g \otimes \iota)|_{A^{t,\alpha}}$ are not necessarily algebra automorphisms unless~$\Gamma$ is abelian, see the discussion in~\cite{MR3580173}*{Section~2.3}.

\begin{remark}\label{rem:previoustwist}
The algebra structure on a graded twisting as above is a very special case of the one constructed in \cites{MR1367080,MR1386496}. Indeed, following  the setting in \cite{MR1367080}, consider an algebra~$A$ graded by a group $\Gamma$ as in Subsection \ref{subsec:graded}, and a map $\tau$ from $\Gamma$ into the group of graded linear automorphisms of $A$ such that, for every $g \in \Gamma$, $\tau_g$ is unit preserving, $\tau_1= \iota_A$, and such that, for all $g,h \in \Gamma$, $a \in A_h$, $b \in A$, we have
$$\tau_g(a\tau_h(b))=\tau_{g}(a)\tau_{gh}(b).$$
Following \cite{MR1367080}, one defines a new product on $A$ by
$$a \cdot_{\tau} b= a\tau_g(b), \ a\in A_g,\ b \in A,$$
and this produces a new twisted associative algebra, denoted $A^\tau$. Now, if we assume in addition that $A$ is a Hopf algebra as in Subsection~\ref{subsec:graded} and the maps $\tau_g$ are coalgebra automorphisms, one can easily check that $A^\tau$, with the original coalgebra structure on $A$, becomes a Hopf algebra.

It is straightforward that, starting from an invariant cocentral action $(p,\alpha)$, the map $\alpha$
satisfies the $\tau$-conditions above, and that the graded twisting Hopf algebra $A^{t, \alpha}$ is isomorphic with the $A^\alpha$ above. Therefore the $\tau$-setting generalizes the setting of invariant cocentral actions. Some of the basic results on graded twisting can then be extended to $A^\tau$, as will be indicated in several places. At the same time most of our considerations in the subsequent sections do not seem to have simple generalizations to this setting. Nevertheless, one clear advantage of the $\tau$-setting, pointed to us by the referee, is that it puts the original Hopf algebra and the twisted one on an equal footing, since one can come back to the original $A$ by twisting again using the map $g \mapsto \tau_g^{-1}$. This observation overcomes the difficulty that the maps $\tilde\alpha_g=(\alpha_g \otimes \iota)|_{A^{t,\alpha}}$ are not necessarily algebra automorphisms unless~$\Gamma$ is abelian, and would have simplified  and clarified the discussion in~\cite{MR3580173}*{Section~2.3}.

An even more general construction arises from \cite{MR1386496}. Assume we are given a Hopf algebra~$H$, a (right) $H$-comodule algebra $A$ and a convolution invertible linear map $\tau$ from $H$ into the space of linear endomorphisms of $A$. A new product on $A$ is defined by
$$a \cdot_\tau b = a_{(0)}\tau_{a_{(1)}}(b), \ a,b \in A.$$
Conditions that ensure that the new product is associative (with $1$ as unit) are given in \cite{MR1386496}*{Theorem 1.1}. Assuming in addition that $A$ is a Hopf algebra, it is not difficult to write down axioms that ensure that the new algebra $A^\tau$ is a Hopf algebra (the coalgebra structure being the original one of $A$). While it is hard to imagine that there are meaningful analogues of most of the results of the present paper to this setting, this construction is probably worth studying for other purposes. For example, in view of the results of \cite{MR1386496}*{Section 3}, this general twisting procedure should enable one to treat the crossed product Hopf algebras as studied in \cites{MR3169407,MR2802546}.
\end{remark}

Coming back to a Hopf algebra $A$ endowed with an invariant cocentral action $(p, \alpha)$ of the group $\Gamma$, since the coalgebras $A$ and $A^{t, \alpha}$ are isomorphic, there is an  equivalence of comodule categories
\begin{equation}\label{eq:comod-eqv-for-twist}
F\colon \Comod(A) \to \Comod(A^{t, \alpha}), \quad V \mapsto V^{t,\alpha} = \bigoplus_{g \in \Gamma} V_g \otimes g,
\end{equation}
as $K$-linear categories, see~\cite{MR3580173}*{Section~2} for the case of finite dimensional comodules. Here, $V_g$ denotes the $g$-homogeneous component of $V$ for the $\Gamma$-grading induced by the coaction of $K \Gamma$ given by the composition of $A$-coaction and $p$. The $A^{t,\alpha}$-comodule structure of $V^{t,\alpha}$ is defined by $v \otimes g \mapsto v_{(-1)} \otimes g \otimes v_{(0)} \otimes g$ for $v \in V_g$.

In general this equivalence is not an equivalence of monoidal categories, but it is a quasi-monoidal equivalence when the cocentral action is \emph{almost adjoint} in the sense of \cite{MR3580173}. In Section~\ref{sec:twist-comod-algebr} we will see that for appropriate categories of comodules $F$ does become a monoidal equivalence.

\subsection{Graded twisting of monoidal categories}\label{ssec:cat-twist}

Let us now recall the categorical counterpart of the above construction~\cite{MR3580173}.

Let $\CC$ be a $\Gamma$-graded $K$-linear monoidal category. Thus, we are given full subcategories~$\CC_g$ for $g \in \Gamma$ such that any object $X$ in $\CC$ admits a unique (up to isomorphism) decomposition $X \simeq \bigoplus_{g \in \Gamma} X_g$ with $X_g \in \CC_g$ (and $X_g=0$ for all but a finite number of $g$'s), there are no nonzero morphisms between the objects in $\CC_g$ and $\CC_h$ for $g\ne h$, $\un \in \CC_e$, and the monoidal structure satisfies $X \otimes Y \in \CC_{g h}$ for all homogeneous objects $X \in \CC_g$ and $Y \in \CC_h$.

Consider now a weak action of $\Gamma$ on $\CC$, that is, a monoidal functor $\alpha\colon\underline{\Gamma}\to\underline{\Aut}^\otimes(\CC)$, where~$\underline{\Gamma}$ is the monoidal category with objects the elements of $\Gamma$, no nontrivial morphisms, and with the tensor structure given by the product in $\Gamma$, and where $\underline{\Aut}^\otimes(\CC)$ is the monoidal category of monoidal autoequivalences of $\CC$, with the tensor structure given by the composition of monoidal functors. More concretely, this means that we are given monoidal autoequivalences $(\alpha^g,\alpha^g_2,\alpha^g_0)$ of $\CC$ for each $g \in \Gamma$, which we usually denote simply by $\alpha^g$, where
\begin{itemize}
\item  $\alpha^g$ is an autoequivalence of $\CC$ as a $K$-linear category,
\item $\alpha^g_2\colon \alpha^g(X) \otimes \alpha^g(Y) \to \alpha^g(X \otimes Y)$ is a natural family of isomorphisms for $X, Y$ in $\CC$,
\item $\alpha^g_0 \colon \un \to \alpha^g(\un)$ is an isomorphism,
\end{itemize}
which satisfy the standard set of compatibility conditions for associator and unit, and natural monoidal isomorphisms  $\eta^{g,h}=(\eta^{g,h}_X\colon\alpha^g\alpha^h(X)\to\alpha^{gh}(X))_{X\in\CC}$ from $\alpha^g \alpha^h$ to $\alpha^{g h}$ such that $\alpha^e\simeq\Id_\CC$ and
\begin{equation*}
\label{eq:assoc-eta-cond}
\eta^{g, h k}_X \alpha^g(\eta^{h,k}_X) = \eta^{g h, k}_X \eta^{g,h}_{\alpha^k(X)}
\end{equation*}
as morphisms from $\alpha^g \alpha^h \alpha^k(X)$ to $\alpha^{g h k}(X)$. Replacing $\underline{\Gamma}\to\underline{\Aut}^\otimes(\CC)$ by a naturally monoidally isomorphic functor we may and, when convenient, will  assume that
\begin{equation}\label{eq:weakact1}
\alpha^e=\Id_\CC,\ \ \eta^{e,g}_X=\eta^{g,e}_X=\iota.
\end{equation}
If $\CC$ is strict, then similarly we may assume that
\begin{equation}\label{eq:weakact2}
\alpha^g(\un)=\un,\ \ \alpha^g_2(\un,X)=\alpha^g_2(X,\un)=\iota.
\end{equation}
Note that we then also have
\begin{equation}\label{eq:weakact3}
\alpha^g_0=\iota,\ \ \eta^{g,h}_\un=\iota.
\end{equation}

A weak action $(\alpha,\eta)$ is called \emph{invariant} if every $\alpha^g$ preserves the homogeneous subcategories~$\CC_h$ for all $h\in \Gamma$. Given such an action, we denote by $\CC^{t,(\alpha, \eta)}$, or simply by $\CC^{t,\alpha}$, the full monoidal $K$-linear subcategory of $\CC \rtimes_{\alpha,\eta} \Gamma$ (called the \emph{semidirect product} in \cite{MR1815142}) obtained by taking direct sums of the objects $X \boxtimes g$ for $g \in \Gamma$ and $X \in \CC_g$, and call $\CC^{t,\alpha}$ the \emph{graded twisting of $\CC$}.

By construction, $\CC^{t,\alpha}$ is equivalent to $\CC$ as a $\Gamma$-graded $K$-linear category. Identifying $\CC$ and $\CC^{t,\alpha}$ as $K$-linear categories, we may express the twisted monoidal structure as a bifunctor on~$\CC$ given by $X \otimes_{\alpha} Y = X \otimes \alpha^g(Y)$ for $X \in \CC_g$.

\begin{remark}
It is  possible to define a categorical analogue of $\tau$-twisting discussed in Remark~\ref{rem:previoustwist}. Let $\CC$ be a $\Gamma$-graded $K$-linear monoidal category. Assume we are given autoequivalences $\tau_g$, $g\in\Gamma$, of $\CC$ as a graded linear category such that $\tau_g(\un)=\un$ and $\tau_e=\Id_\CC$, and natural isomorphisms
$$
\psi_{g,h}\colon\tau_g(X\otimes\tau_h(Y))\to\tau_g(X)\otimes\tau_{gh}(Y),\ X\in\CC_h,\ Y\in\CC.
$$
We then define a new tensor structure on $\CC$ such that
$$
X\otimes_\tau Y=X\otimes\tau_g(Y),\ X\in\CC_g,\ Y\in\CC.
$$
We then need to impose an additional requirement on $\tau$ and $\psi$, a detailed formulation of which we leave to the reader, saying that this product together with associativity morphisms given by the compositions
$$
(X\otimes\tau_g(Y_h))\otimes\tau_{gh}(Z)\xrightarrow{\Phi} X\otimes (\tau_g(Y_h)\otimes\tau_{gh}(Z))\xrightarrow{\iota\otimes\psi_{g,h}^{-1}} X\otimes\tau_g(Y\otimes\tau_h(Z))
$$
for $X\in\CC_g$ and $Y\in\CC_h$ ($\Phi$ is the associativity morphism in $\CC$) defines a monoidal category, which we denote by $\CC^\tau$.
\end{remark}

\section{Quantum subgroups of twisted algebraic groups}
\label{sec:twist-groups-acti}

In this section we extend some of our results on graded twistings of compact groups~\cite{MR3580173} to affine algebraic groups.

\subsection{Quotients of twisted Hopf algebras}\label{ssec:quotients}

We start with an arbitrary Hopf algebra $A$ endowed with an invariant cocentral action $(p, \alpha)$ of $\Gamma$.
Assume $I\subset A$ is a proper $\Gamma$-graded and $\Gamma$-stable Hopf ideal, so that $I = \bigoplus_g (I \cap A_g)$ and $\alpha_g(I)\subset I$ for all $g\in\Gamma$. Then $I\subset\ker\eps$, hence $I\subset\ker p=\bigoplus_g A_g^+$, so the invariant cocentral action $(p, \alpha)$ induces an invariant cocentral action~$(\overline{p}, \overline{\alpha})$ on $A/I$. We can then form the graded twisting $(A/I)^{t, \overline{\alpha}}$, which is nothing else than the quotient $A^{t, \alpha}/j(I)$, with $j$ as in~\eqref{eq:def-of-j}.

Recall that even when $\alpha_g\otimes\iota$ are not algebra automorphisms of $A^{t,\alpha}$, a subspace $X \subset A^{t, \alpha}$ is said to be $\Gamma$-stable if $(\alpha_g\otimes \iota)(X)\subset X$ for all $g \in \Gamma$. The following is a reformulation of \cite{MR3580173}*{Proposition~4.2}.

\begin{proposition}
\label{prop:correspideal}
Let $A$ be a Hopf algebra endowed with an invariant cocentral action $(p, \alpha)$ of a group~$\Gamma$. Let $\IdSet^{p,\alpha}(A)$ (resp. $\IdSet^{\tilde{p},\tilde{\alpha}}(A^{t,\alpha})$) denote the set of $\Gamma$-stable and $\Gamma$-graded ideals of $A$ (resp. of~$A^{t,\alpha}$). Then we have a bijection between these sets given by
$$
\IdSet^{p,\alpha}(A) \ni I = \bigoplus_{g \in \Gamma} I_g \mapsto j(I)=\bigoplus_{g \in \Gamma} I_g\otimes g \in \IdSet^{\tilde{p},\tilde{\alpha}}(A^{t,\alpha}).
$$
This bijection respects the Hopf ideals on both sides, and if $I\subset A$ is a proper $\Gamma$-graded and $\Gamma$-stable Hopf ideal, then $A^{t,\alpha}/j(I) \simeq (A/I)^{t, \overline{\alpha}}$.
\end{proposition}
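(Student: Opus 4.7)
The plan is to observe that $j$ induces a linear bijection between $\Gamma$-graded subspaces on the two sides and then check that under this bijection all of the desired structures (ideal, $\Gamma$-stability, coideal, antipode-stability) correspond. Finally the Hopf isomorphism in the last assertion will just be the map induced by $j$ on quotients.

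First I would set up the bijection at the level of graded subspaces. Any $\Gamma$-graded subspace of $A^{t,\alpha}=\bigoplus_g A_g\otimes g$ is of the form $\bigoplus_g X_g\otimes g$ for a unique family $X_g\subset A_g$, i.e.\ of the form $j(X)$ for a unique $\Gamma$-graded subspace $X=\bigoplus_g X_g$ of $A$. Moreover, since $(\alpha_g\otimes\iota)(a\otimes h)=\alpha_g(a)\otimes h$ and $\alpha_g$ preserves the grading of $A$, one sees immediately that $X$ is $\Gamma$-stable in $A$ iff $j(X)$ is $\Gamma$-stable in $A^{t,\alpha}$.

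Next, I would check that ideals correspond. The crossed product formula gives $(a\otimes g)(b\otimes h)=a\alpha_g(b)\otimes gh$ for $a\in A_g$, $b\in A_h$. Suppose $I=\bigoplus_g I_g\in\IdSet^{p,\alpha}(A)$. For $a\in A_g$ and $x\in I_h$, $\Gamma$-stability of $I$ gives $\alpha_g(x)\in I_h$, so $a\alpha_g(x)\in A_g I_h\subset I_{gh}$; this shows $A^{t,\alpha}\cdot j(I)\subset j(I)$. The right ideal property is analogous. Conversely, if $j(I)$ is an ideal and $I$ is $\Gamma$-stable, then for $a\in A_g$ and $y\in I_h$ we write $y=\alpha_g(x)$ for some $x\in I_h$ (using that $\alpha_g$ is a bijection of $I_h$) and $(a\otimes g)(x\otimes h)\in j(I)$ forces $ay=a\alpha_g(x)\in I_{gh}$; similarly on the right. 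Hence $I\mapsto j(I)$ is a bijection $\IdSet^{p,\alpha}(A)\to\IdSet^{\tilde p,\tilde\alpha}(A^{t,\alpha})$.

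For the Hopf ideal claim, recall that $j$ is by definition a coalgebra isomorphism, so $I$ is a coideal of $A$ iff $j(I)$ is a coideal of $A^{t,\alpha}$, and the counit condition $\eps(I)=0$ transfers trivially. For the antipode, a direct verification using $\Delta(A_g)\subset A_g\otimes A_g$ and $S(A_g)\subset A_{g^{-1}}$ shows that the antipode on $A^{t,\alpha}$ is given by $S^{t,\alpha}(a\otimes g)=\alpha_{g^{-1}}(S(a))\otimes g^{-1}$ for $a\in A_g$. Combined with $\Gamma$-stability of $I$, this gives the equivalence $S(I)\subset I\iff S^{t,\alpha}(j(I))\subset j(I)$.

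The last assertion is then formal: for a proper $\Gamma$-graded $\Gamma$-stable Hopf ideal $I$, the map $j$ descends to a linear isomorphism $A^{t,\alpha}/j(I)\to(A/I)^{t,\bar\alpha}$ since both sides are $\bigoplus_g (A_g/I_g)\otimes g$, and the computations above show this is compatible with the algebra, coalgebra and antipode structures. The main obstacle I anticipate is the careful bookkeeping in the ideal step, where one has to exploit $\Gamma$-stability of $I$ exactly at the right moment to convert between the twisted product $a\alpha_g(x)$ appearing in $A^{t,\alpha}$ and the untwisted product $ax$ in $A$.
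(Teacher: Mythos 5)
Your proof is correct. Note, however, that the paper does not actually write out a from-scratch argument for this proposition: it presents the statement as a reformulation of Proposition~4.2 of the authors' earlier paper on graded twisting, and then sketches an alternative route via the theory of strongly graded rings. That alternative uses the N\u{a}st\u{a}sescu--van Oystaeyen correspondence $I\mapsto I\cap A_e$ between graded ideals of a strongly graded ring and the ideals $J$ of the degree-$e$ part satisfying $A_gJA_{g^{-1}}\subset J$; since $A$ and $A^{t,\alpha}$ share the same degree-$e$ component $A_e\simeq (A^{t,\alpha})_e$, the whole bijection reduces to comparing this absorption condition on the two sides, which is where $\Gamma$-stability of $J$ enters. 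Your direct verification on homogeneous components --- using the crossed-product formula $(a\otimes g)(b\otimes h)=a\alpha_g(b)\otimes gh$ and inserting $\Gamma$-stability exactly to pass between $a\alpha_g(x)$ and $ax$ --- is self-contained and more elementary, and it has the advantage of also making the Hopf-ideal part (coideal via the coalgebra isomorphism $j$, antipode via the explicit formula $S^{t,\alpha}(a\otimes g)=\alpha_{g^{-1}}(S(a))\otimes g^{-1}$, which you correctly identify) and the final quotient isomorphism completely transparent; what the paper's route buys is a conceptual reduction to a classical fact about strongly graded rings, made available by Proposition~\ref{pbasic}(i). One small presentational point: in the converse direction you should make explicit that you start from an arbitrary element of $\IdSet^{\tilde p,\tilde\alpha}(A^{t,\alpha})$, write it as $j(I)$ using the graded-subspace bijection, and only then deduce that $I$ is $\Gamma$-stable and an ideal --- your wording ("if $j(I)$ is an ideal and $I$ is $\Gamma$-stable") presupposes the stability of $I$, but this is harmless since you established earlier that stability of $I$ and of $j(I)$ are equivalent.
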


It is worth mentioning that this result can also be easily established using well-known properties of strongly graded rings. Namely, by~\cite{MR676974}*{Corollary~I.3.2.9} the map $I\mapsto I\cap A_e$ defines a bijection between the graded ideals of $A$ and the ideals $J$ of $A_e$ such that $A_gJA_{g^{-1}}\subset J$ for all $g\in\Gamma$. The same is true for $A^{t,\alpha}$. Then the first statement of the above proposition follows from the observation that a $\Gamma$-stable ideal $J\subset A_e$ has the property $A_gJA_{g^{-1}}\subset J$ if and only if the $\Gamma$-stable ideal $j(J)=J\otimes 1$ of $(A^{t,\alpha})_e\simeq A_e$ has the property $j(A_g)j(J)j(A_{g^{-1}})\subset j(J)$.

\smallskip

In view of Proposition~\ref{prop:correspideal}, a natural question is under which conditions a Hopf ideal of $A$ or~$A^{t,\alpha}$ is $\Gamma$-stable and $\Gamma$-graded. The next result shows that under quite general assumptions the only reason why a Hopf ideal may be nongraded is the existence of nontrivial quotients of~$\Gamma$.

\begin{proposition}\label{prop:cocentralexact}
Let $p\colon A \rightarrow K\Gamma$ be a surjective cocentral Hopf algebra homomorphism and $f\colon A\rightarrow L$ be a surjective Hopf algebra homomorphism. If $L$ is faithfully flat over $f(A_e)$ as a left or right module, then there exists a surjective group morphism $u\colon\Gamma \rightarrow \bar{\Gamma}$ and a commutative diagram of Hopf algebra homomorphisms
$$
\xymatrix{
K \ar[r] & A_e \ar[r]^i \ar[d]^{f|_{A_e}} & A \ar[r]^p \ar[d]^f & K \Gamma \ar[r] \ar[d]^u & K\\
K \ar[r] & f(A_e) \ar[r] &  L \ar[r]^q & K \bar{\Gamma} \ar[r] & K
}
$$
with exact (as sequences of Hopf algebras) rows, where $q$ is cocentral.

The faithful flatness assumption is satisfied in each of the following cases:
\begin{enumerate}
\item\label{it:A-e-comm}  $A_e$ is commutative;
\item\label{it:L-coss}  $L$ is cosemisimple;
\item\label{it:L-ptd}  $L$ is pointed, that is, simple comodules are one-dimensional.
\end{enumerate}
\end{proposition}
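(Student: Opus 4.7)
The plan is to build the bottom row directly from the Hopf ideal $I := f(\ker p) \subset L$, and then invoke standard structure theorems to verify faithful flatness in each of the three cases.

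Set $B = f(A_e)$. By Proposition~\ref{pbasic}(i) and~(iv) we have $\ker p = A A_e^+ = A_e^+ A$, so applying the Hopf algebra surjection $f$ gives $I = LB^+ = B^+L$; in particular $I$ is a Hopf ideal, so $\bar L := L/I$ is a Hopf algebra and the canonical projection $q\colon L \to \bar L$ is a surjective Hopf algebra map. Since $q\circ f$ vanishes on $\ker p$, it factors through $p$ as $q\circ f = v\circ p$ for a unique Hopf algebra map $v\colon K\Gamma \to \bar L$, which is necessarily surjective since $q$ and $f$ are. Now $K\Gamma$ is linearly spanned by the group-likes $\Gamma$, and any Hopf algebra map sends group-likes to group-likes, so $\bar L$ is linearly spanned by $G(\bar L)$; by linear independence of distinct group-likes this forces $\bar L = K\bar\Gamma$ with $\bar\Gamma := G(\bar L)$, and $v|_\Gamma$ descends to the required surjective group homomorphism $u\colon \Gamma \to \bar\Gamma$. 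Cocentrality of $q$ follows from $q\circ f = v\circ p$, cocentrality of $p$, and surjectivity of $f$, by a short diagram chase.

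To complete exactness of the bottom row, it remains to identify $B$ with $L^{\co \bar L} = {}^{\co \bar L}L$. The normality relation $B^+L = LB^+$ is already in hand, so this identification is an instance of Schneider's classical theorem on exact sequences of Hopf algebras under faithful flatness over a normal Hopf subalgebra. This is the unique step that uses the flatness hypothesis.

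Finally, for the three sufficient conditions: case~\eqref{it:L-ptd} is Radford's theorem that a pointed Hopf algebra is free, hence faithfully flat, over any Hopf subalgebra; case~\eqref{it:L-coss} follows from the known fact that a cosemisimple Hopf algebra is faithfully flat over any of its Hopf subalgebras; and case~\eqref{it:A-e-comm} relies on a faithful flatness theorem for a Hopf algebra over a normal commutative Hopf subalgebra. The main obstacle I anticipate is case~\eqref{it:A-e-comm}: while normality has already been checked, pinning down the correct reference for commutative (as opposed to central) Hopf subalgebras requires some care. The rest of the argument is essentially formal and follows the pattern of classical Hopf-algebraic exact sequence constructions.
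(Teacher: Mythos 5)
Your proposal is correct and follows essentially the same route as the paper's proof: form the quotient $L/f(A_e)^+L$, use the Takeuchi--Schneider faithful-flatness criterion to get exactness of the bottom row (the paper cites \cite{MR1243637}*{Proposition~3.4.3}), identify the quotient with $K\bar\Gamma$ as a quotient Hopf algebra of $K\Gamma$, and deduce cocentrality of $q$ from that of $p$. For the three sufficient conditions the paper invokes exactly the references you anticipate: \cite{MR1954457}*{Proposition~3.12} for commutative $A_e$, \cite{MR3263140} for cosemisimple $L$, and \citelist{\cite{MR0321963}\cite{MR0437582}} for pointed $L$.
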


\begin{proof}
Since $A_e^+A=AA_e^+$ by Proposition~\ref{pbasic}, we have $f(A_e)^+L=Lf(A_e)^+$. Hence we can form the quotient Hopf algebra $Q = L/f(A_e)^+L$, together with the canonical surjection $q\colon L\to Q$. Since $L$ is faithfully flat over $f(A_e)$, by~\cite{MR1243637}*{Proposition~3.4.3} we get an exact sequence
$$
K \to f(A_e) \to L \xrightarrow{q}Q\to K
$$
of Hopf algebra homomorphisms. Since we also have $\ker p= A_e^+A$, we see that $qf$ vanishes on $\ker p$ and there exists a surjective Hopf algebra homomorphism $u\colon K\Gamma \rightarrow Q$ such that $u p = qf$. It follows that we can identify $Q$ with $K\bar\Gamma$ for a quotient $\bar\Gamma$ of $\Gamma$ in such a way that $u\colon K\Gamma\to K\bar\Gamma$ is induced by the factor map $\Gamma\to\bar\Gamma$. The cocentrality of $q$ follows then from that of $p$. This finishes the proof of the first part of the proposition.

As for the concrete conditions that ensure faithful flatness of $L$ over $f(A_e)$, for the case~\eqref{it:A-e-comm} this is~\cite{MR1954457}*{Proposition 3.12}, for~\eqref{it:L-coss} this is~\cite{MR3263140}, while for~\eqref{it:L-ptd} see, e.g.,~\citelist{\cite{MR0321963}\cite{MR0437582}}.
\end{proof}

\begin{corollary}\label{cor:cocentralexact}
Under the assumptions of the proposition, if $\Gamma$ is simple, then either $L=f(A_e)$ or $\ker f\subset A$ is a proper $\Gamma$-graded Hopf ideal.
\end{corollary}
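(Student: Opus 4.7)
The plan is to apply Proposition~\ref{prop:cocentralexact} directly and then exploit the simplicity of $\Gamma$ by analyzing the two possibilities for the quotient group $\bar\Gamma$.

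First, I would invoke Proposition~\ref{prop:cocentralexact} to obtain the commutative diagram with exact rows and, in particular, the surjective group homomorphism $u\colon\Gamma\to\bar\Gamma$. Since $\Gamma$ is simple, $\ker u$ is either all of $\Gamma$ or trivial, so $\bar\Gamma$ is either the trivial group or $u$ is an isomorphism.

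In the first subcase ($\bar\Gamma=\{e\}$), the bottom row of the diagram reads $K\to f(A_e)\to L\xrightarrow{\eps} K\to K$, and exactness forces $f(A_e)=L^{\co K}=L$, which is exactly the first alternative of the corollary.

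In the second subcase ($u$ an isomorphism), I would check that $f$ is a graded homomorphism with respect to the $\Gamma$-gradings on $A$ (coming from $p$) and on $L$ (coming from $q$ and the identification $\bar\Gamma\cong\Gamma$). Concretely, for $a\in A_g$ the cocentrality relation $p(a_{(1)})\otimes a_{(2)}=g\otimes a$, together with $up=qf$, yields $q(f(a)_{(1)})\otimes f(a)_{(2)}=u(g)\otimes f(a)$, showing $f(A_g)\subset L_{u(g)}$. Since the $L_{u(g)}$ are linearly independent pieces of $L$, the kernel of any graded linear map decomposes componentwise, so $\ker f=\bigoplus_g(\ker f\cap A_g)$ is $\Gamma$-graded. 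It is a Hopf ideal because $f$ is a Hopf algebra morphism, and it is proper simply because $L\neq 0$. This gives the second alternative.

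The only step requiring care is the compatibility of $f$ with the two gradings, and this is a routine consequence of the cocentrality condition combined with the commutativity $up=qf$ from the diagram; no substantive obstacle is expected.
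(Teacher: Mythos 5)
Your proposal is correct and follows essentially the same route as the paper: simplicity of $\Gamma$ forces either $\bar\Gamma$ to be trivial, in which case $f(A_e)=L^{\co K\bar\Gamma}=L$, or $u$ to be an isomorphism. In the latter case the paper just notes $\ker f\subset\ker p$ and concludes gradedness in one line; your explicit check that $f$ maps $A_g$ into $L_{u(g)}$ (so the kernel of a graded map with respect to an injective $u$ decomposes componentwise) is a perfectly valid way of filling in that ``hence'', equivalent to the observation that a coideal contained in $\ker p$ is automatically $\Gamma$-graded.
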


\bp By assumption, either $\bar\Gamma$ is trivial or $u$ is an isomorphism. In the first case we have $f(A_e)=L^{\co K\bar\Gamma}=L$. In the second case $\ker f\subset\ker p$ and hence $\ker f$ is $\Gamma$-graded.
\ep

\begin{remark}
It is not true that a Hopf algebra is always faithfully flat over its Hopf subalgebras~\cite{MR1761130}. It is, however, still open whether this is true for Hopf algebras with bijective antipode.
\end{remark}

\subsection{Quotients of twisted commutative Hopf algebras}

We now turn to graded twistings of commutative Hopf algebras. In that case there is no loss of generality in assuming that $\Gamma$ is abelian.

\begin{lemma}\label{lem:correspidealcommutative}
Let $A$ be a commutative Hopf algebra endowed with an invariant cocentral action~$(p, \alpha)$ of a group~$\Gamma$. If $J$ is a $\Gamma$-graded ideal of $A^{t, \alpha}$, then $J$ is automatically $\Gamma$-stable.
\end{lemma}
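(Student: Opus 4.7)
The plan is to transfer the problem from $A^{t,\alpha}$ back to $A$ via the linear isomorphism $j$ of~\eqref{eq:def-of-j}, and then exploit commutativity of $A$ together with the strong-grading property to derive $\Gamma$-stability from the ordinary ideal structure.

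Concretely, I would set $\bar{J} = j^{-1}(J) = \bigoplus_{h \in \Gamma} \bar{J}_h$, where $\bar{J}_h = \{a \in A_h : a \otimes h \in J\}$. Since $\tilde{\alpha}_g \circ j = j \circ \alpha_g$ (both sides send $\sum_h a_h$ to $\sum_h \alpha_g(a_h) \otimes h$), the $\Gamma$-stability of $J$ is equivalent to showing $\alpha_g(\bar{J}_h) \subset \bar{J}_h$ for all $g, h \in \Gamma$.

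The first step is to observe that $\bar{J}$ is an ordinary graded ideal of $A$. Applying the right-ideal condition of $J$ in $A^{t,\alpha}$ to $(a \otimes h)(b \otimes k) = a \alpha_h(b) \otimes hk$ for $a \in \bar{J}_h$ and $b \in A_k$ gives $a \alpha_h(b) \in \bar{J}_{hk}$. Since $\alpha_h$ is a bijection of $A_k$, as $b$ varies in $A_k$ so does $\alpha_h(b)$, and therefore $a A_k \subset \bar{J}_{hk}$ for all $k$, i.e.\ $\bar{J} A \subset \bar{J}$. Commutativity of $A$ then promotes $\bar{J}$ to a two-sided graded ideal of $A$ in the usual sense.

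The second step uses the left-ideal condition: $(x \otimes g)(a \otimes h) = x \alpha_g(a) \otimes gh \in J$ yields $x \alpha_g(a) \in \bar{J}_{gh}$ for every $x \in A_g$ and $a \in \bar{J}_h$. Subtracting the ordinary relation $xa \in \bar{J}_{gh}$ (which comes from step one) gives $A_g \bigl(\alpha_g(a) - a\bigr) \subset \bar{J}_{gh}$. Multiplying on the left by $A_{g^{-1}}$ and using the ordinary ideal property once more yields $A_{g^{-1}} A_g \bigl(\alpha_g(a) - a\bigr) \subset \bar{J}_h$. By the strong-grading identity $A_{g^{-1}} A_g = A_e$ of Proposition~\ref{pbasic}(i), the left-hand side contains $\alpha_g(a) - a$ (since $1 \in A_e$), so $\alpha_g(a) - a \in \bar{J}_h$ and hence $\alpha_g(a) \in \bar{J}_h$, as required.

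The main obstacle is the extraction of $\alpha_g(a)$ from the cluttered left-ideal expression $x \alpha_g(a)$: neither commutativity of $A$ alone nor strong grading alone is enough. It is precisely their interplay — commutativity upgrading $\bar{J}$ to an honest ideal so that the untwisted product $xa$ also sits in $\bar{J}$, and strong grading supplying a partition of unity inside $A_{g^{-1}} A_g = A_e$ — that allows the leading factor to be cancelled and the bare $\alpha_g(a)$ to be recovered inside $\bar{J}_h$.
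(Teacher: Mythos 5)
Your proposal is correct and follows essentially the same route as the paper: both proofs extract $J_gA_h\subset J_{gh}$ and $A_g\alpha_g(J_h)\subset J_{gh}$ from the two ideal conditions in the crossed product, then recover $\alpha_g(a)$ by inserting $1\in A_e=A_{g^{-1}}A_g$ and using commutativity to move the $A_{g^{-1}}$ factor past $J_{gh}$. The only cosmetic difference is your intermediate subtraction of $xa$ from $x\alpha_g(a)$; the paper writes the same cancellation directly as $\alpha_g(J_h)\subset A_{g^{-1}}A_g\alpha_g(J_h)\subset A_{g^{-1}}J_{gh}=J_{gh}A_{g^{-1}}\subset J_h$.
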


\begin{proof}
Since $J$ is $\Gamma$-graded, we can write $J =\bigoplus_{g \in \Gamma} J_g \otimes g$ with $J_g \subset A_g$. Then
$$
J (A_h \otimes h) = \bigoplus_{g \in \Gamma} J_g\alpha_g(A_h) \otimes gh\subset J,
$$
hence $J_g A_h = J_g \alpha_g(A_h) \subset J_{gh}$ for all $g,h$. Similarly, from
$$
(A_h \otimes h)J = \bigoplus_{g \in \Gamma} A_h\alpha_h(J_g) \otimes hg\subset J
$$
we obtain $A_h\alpha_h(J_g) \subset J_{hg}$ for all $g,h$. We thus have
$$
\alpha_g(J_h)\subset\alpha_g(A_eJ_h)=A_e\alpha_g(J_h)= A_{g^{-1}}A_g\alpha_g(J_h) \subset A_{g^{-1}}J_{gh}.
$$
But by the commutativity of $A$ the last term is equal to $J_{gh}A_{g^{-1}}\subset J_h$. This shows the $\Gamma$-stability of $J$.
\end{proof}

Combining this with Corollary~\ref{cor:cocentralexact} we get the following result.

\begin{theorem}\label{thm:quotcomprime}
Suppose that $\Gamma$ is a cyclic group of prime order and $A$ is a commutative Hopf algebra endowed with an invariant cocentral action $(p, \alpha)$ of $\Gamma$ such that $p\colon A\to K\Gamma$ is surjective. Then for any Hopf algebra quotient $L$ of $A^{t,\alpha}$ one of the following holds:
\begin{enumerate}
\item\label{it:concl-1} $L$ is commutative and isomorphic to a Hopf algebra quotient of $A_e$;
\item\label{it:concl-2} $L$ is isomorphic to $(A/I)^{t,\overline{\alpha}}$, for a $\Gamma$-graded and $\Gamma$-stable Hopf ideal $I \subset A$.
\end{enumerate}
\end{theorem}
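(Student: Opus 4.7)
The plan is to reduce the statement to Corollary~\ref{cor:cocentralexact} applied to the twisted Hopf algebra $A^{t,\alpha}$ itself (equipped with its own canonical cocentral map $\tilde p = p\otimes\varepsilon$), and then use Lemma~\ref{lem:correspidealcommutative} together with Proposition~\ref{prop:correspideal} to identify the resulting quotient.

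First I would verify that the hypotheses of Corollary~\ref{cor:cocentralexact} are met for the Hopf algebra homomorphism $f\colon A^{t,\alpha}\to L$ together with the surjective cocentral homomorphism $\tilde p\colon A^{t,\alpha}\to K\Gamma$ (surjectivity of $\tilde p$ is immediate from surjectivity of $p$). The required faithful flatness of $L$ over $f((A^{t,\alpha})_e)$ follows from case~\eqref{it:A-e-comm} of Proposition~\ref{prop:cocentralexact}, since $(A^{t,\alpha})_e \simeq A_e$ is commutative by assumption on $A$.

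Since $\Gamma$ has prime order it is simple, so Corollary~\ref{cor:cocentralexact} yields one of two alternatives. If $L=f((A^{t,\alpha})_e)$, then $L$ is a Hopf algebra quotient of $A_e$; in particular it is commutative, giving conclusion~\eqref{it:concl-1}. Otherwise $\ker f$ is a proper $\Gamma$-graded Hopf ideal of $A^{t,\alpha}$. By Lemma~\ref{lem:correspidealcommutative} (whose proof is where commutativity of $A$ enters in an essential way), $\ker f$ is then automatically $\Gamma$-stable. Applying Proposition~\ref{prop:correspideal} backwards, we find a proper $\Gamma$-graded and $\Gamma$-stable Hopf ideal $I\subset A$ with $j(I)=\ker f$, and the same proposition gives $L\simeq A^{t,\alpha}/j(I)\simeq (A/I)^{t,\overline{\alpha}}$, which is conclusion~\eqref{it:concl-2}.

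There is no real obstacle here; the argument is essentially an assembly of the preceding results. The only point worth being careful about is that Corollary~\ref{cor:cocentralexact} has to be applied to $A^{t,\alpha}$ rather than to $A$, so one must check that the commutativity hypothesis on $A$ is used in two distinct places: once to guarantee faithful flatness through condition~\eqref{it:A-e-comm} of Proposition~\ref{prop:cocentralexact} (via $(A^{t,\alpha})_e\simeq A_e$), and once to upgrade ``$\Gamma$-graded'' to ``$\Gamma$-graded and $\Gamma$-stable'' via Lemma~\ref{lem:correspidealcommutative}.
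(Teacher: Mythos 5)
Your argument is correct and is essentially identical to the paper's proof: apply Corollary~\ref{cor:cocentralexact} to $A^{t,\alpha}$ (using commutativity of $(A^{t,\alpha})_e\simeq A_e$ for the faithful flatness hypothesis and simplicity of $\Gamma$ for the dichotomy), then upgrade $\Gamma$-gradedness of the kernel to $\Gamma$-stability via Lemma~\ref{lem:correspidealcommutative}, and conclude with Proposition~\ref{prop:correspideal}. Your remark that commutativity of $A$ enters in exactly two places is a fair and accurate gloss on the paper's argument.
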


\begin{proof}
Write $L$ as $A^{t, \alpha}/J$. Since $(A^{t,\alpha})_e\simeq A_e$ is commutative, we can apply Corollary~\ref{cor:cocentralexact} to~$A^{t, \alpha}$ and conclude that either $L$ is a quotient of $(A^{t,\alpha})_e$ or $J$ is $\Gamma$-graded. In the second case, by the previous lemma, $J$ is also $\Gamma$-stable and then, by Proposition~\ref{prop:correspideal}, we have $J=j(I)$ for a $\Gamma$-graded and $\Gamma$-stable Hopf ideal $I \subset A$, so that $L\simeq(A/I)^{t,\overline{\alpha}}$.
\end{proof}

Recall that by Cartier's theorem $A$ has no nilpotent elements~\citelist{\cite{MR0148665}\cite{MR547117}*{Section 11.4}}. Let us further assume that $K$ is algebraically closed and $A$ is finitely generated over $K$, so that~$A$ is isomorphic to the algebra $\Pol(G)$ of regular functions on some affine algebraic group~$G$ defined over $K$ (which can be identified with a Zariski closed subgroup of $\GL_n(K)$ for some~$n$). In this case an invariant cocentral action of $\Gamma$, with surjective homomorphism $p\colon A\to K\Gamma$, amounts to a pair $(i, \alpha)$, where
\begin{itemize}
 \item $i\colon\hat{\Gamma}={\rm Hom}(\Gamma, K^\times) \rightarrow Z(G)$ is an injective morphism of algebraic groups,
\item $\alpha\colon \Gamma \rightarrow \Aut(G)$ is a group morphism,
\end{itemize}
such that for all $g \in \Gamma$ and $\psi \in \hat{\Gamma}$, $\alpha_g(i(\psi))=i(\psi)$. By an invariant cocentral action of $\Gamma$ on~$G$ we will mean such a pair $(i,\alpha)$ and denote by $\Pol(G^{t, \alpha})$ the Hopf algebra $\Pol(G)^{t, \alpha}$. See Subsection \ref{ssec:q-plane} for concrete examples of this kind, or more generally \cite{MR3580173}.

Note that the $\Gamma$-grading on $\Pol(G)$ is described as
$$
\Pol(G)_g=\{a \in \Pol(G) \ | \ a(i(\psi)x)=\psi(g)a(x) \ \forall x \in G, \ \forall \psi\in \hat{\Gamma} \} \quad (g \in \Gamma).
$$
In particular, $\Pol(G)_e=\Pol(G/i(\hat\Gamma))$.
In this setting, a proper $\Gamma$-graded Hopf ideal of $\Pol(G)$ is defined by an algebraic subgroup $H\subset G$ such that $i(\hat{\Gamma})\subset H$. Such an ideal is $\Gamma$-stable if and only if $H$ is globally invariant under the automorphisms $\alpha_g$, $g\in\Gamma$, in which case we say that~$H$ itself is $\Gamma$-stable.

In this language, Theorem~\ref{thm:quotcomprime} says that if $\Gamma$ is a cyclic group of prime order, then any Hopf algebra quotient of $\Pol(G^{t, \alpha})$ is isomorphic either to $\Pol(H)$ for an algebraic subgroup $H\subset G/i(\hat\Gamma)$ or to $\Pol(H^{t,\alpha})$ for a $\Gamma$-stable algebraic subgroup $H\subset G$  such that $i(\hat{\Gamma})\subset H$.

We next want to analyze when the algebra $\Pol(G^{t,\alpha})$ is noncommutative. For this, it is convenient, as in~\cite{MR3580173}, to consider the following subgroup of $G$:
$$
G_1=\{ x \in G \ | \ \Gamma. x \subset i(\hat{\Gamma})x\},
$$
where $\Gamma.x=\{\alpha_g(x)\}_{g\in\Gamma}$.

\begin{lemma}\label{lem:commtwi}
 Let $(i,\alpha)$ be an invariant cocentral action of a finite abelian group $\Gamma$ on an affine algebraic group $G$. Assume that $G_1\ne G$. Then the algebra $\Pol(G^{t, \alpha})$ is noncommutative. Conversely, if $\Pol(G^{t, \alpha})$ is noncommutative and $\Gamma$ is cyclic, then $G_1\ne G$.
\end{lemma}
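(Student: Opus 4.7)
The plan is to translate the commutativity of $\Pol(G^{t,\alpha})$ into a pointwise identity in $A=\Pol(G)$. Because $\Gamma$ is abelian, the crossed-product multiplication $(a\otimes g)(b\otimes h)=a\,\alpha_g(b)\otimes gh$ shows that $\Pol(G^{t,\alpha})$ is commutative if and only if $a\,\alpha_g(b)=b\,\alpha_h(a)$ for all $g,h\in\Gamma$ and all homogeneous $a\in A_g,\,b\in A_h$; equivalently, using commutativity of $A$, this rewrites as $a\,\alpha_g(b)=\alpha_h(a)\,b$.

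For the first implication (commutative $\Rightarrow G_1=G$), I would specialize the identity above to $g=e$ and $a\in A_e$: one obtains $(a-\alpha_h(a))A_h=0$ for every $h\in\Gamma$. The strong grading (Proposition~\ref{pbasic}) gives $1\in A_{h^{-1}}A_h$, so the annihilator of $A_h$ in $A$ is trivial, forcing $\alpha_h|_{A_e}=\id$. Since $A_e=\Pol(G/i(\hat\Gamma))$, this means that the automorphism of $G$ induced by $\alpha_h$ acts trivially on $G/i(\hat\Gamma)$, i.e., $\alpha_h(x)\in i(\hat\Gamma)\,x$ for every $x\in G$, which is precisely $G_1=G$. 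Contrapositively, $G_1\ne G$ forces noncommutativity; note this half uses neither cyclicity nor finiteness of~$\Gamma$.

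For the converse under cyclicity, assume $G_1=G$ and $\Gamma$ is cyclic. For each $g\in\Gamma$ and $x\in G$, injectivity of $i$ yields a unique character $\chi_g(x)\in\hat\Gamma$ with $\alpha_g(x)=i(\chi_g(x))\,x$. The facts that $\alpha_g$ fixes $i(\hat\Gamma)$ pointwise (coming from $p\alpha_g=p$) and that $i(\hat\Gamma)$ is central in $G$ imply $\chi_{gh}(x)=\chi_g(x)\chi_h(x)$, so $g\mapsto\chi_g(x)$ is a homomorphism $\Gamma\to\hat\Gamma$. Using the homogeneity $b(i(\psi)y)=\psi(h)b(y)$ for $b\in A_h$, the evaluations of $a\,\alpha_g(b)$ and $\alpha_h(a)\,b$ at $x$ produce $\chi_g(x)(h)^{-1}\,a(x)b(x)$ and $\chi_h(x)(g)^{-1}\,a(x)b(x)$, respectively; so it suffices to verify $\chi_g(x)(h)=\chi_h(x)(g)$ for all $x,g,h$. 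With $\Gamma=\langle g_0\rangle$ cyclic, writing $g=g_0^k,\,h=g_0^l$ gives $\chi_g(x)(h)=\bigl(\chi_{g_0}(x)(g_0)\bigr)^{kl}$, visibly symmetric in $k,l$, so the identity holds and commutativity follows.

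The main technical input is the triviality of the annihilator of $A_h$ in $A$, a direct consequence of the strong grading. Beyond that, the converse direction is a straightforward computation once the characters $\chi_g(x)$ are set up, and cyclicity enters only at the very last step to ensure symmetry of the pairing $(g,h)\mapsto\chi_g(x)(h)$—a pairing that can indeed fail to be symmetric for general abelian $\Gamma$ (for example $\ZZ/2\times\ZZ/2$), which shows that some hypothesis beyond abelianness is genuinely needed for the converse.
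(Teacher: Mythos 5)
Your proof is correct. The converse direction (cyclic $\Gamma$ and $G_1=G$ imply commutativity) is essentially the paper's own argument: both reduce the identity $a\alpha_g(b)=b\alpha_h(a)$ to the symmetry of the bicharacter $(g,h)\mapsto\psi_{x,g}(h)$, which you simply make explicit via the exponent computation $\chi_{g_0^k}(x)(g_0^l)=\chi_{g_0}(x)(g_0)^{kl}$. The forward direction, however, takes a genuinely different route. The paper argues directly: given $x\notin G_1$ and $g$ with $\alpha_g(x)\notin i(\hat\Gamma)x$, it uses the disjointness of the cosets $i(\hat\Gamma)\alpha_{g^{-1}}(x)$ and $i(\hat\Gamma)x$ to construct, by averaging over $\hat\Gamma$, explicit homogeneous functions $a'_e\in A_e$ and $a'_g\in A_g$ taking value $1$ at $x$ and $0$ at $\alpha_{g^{-1}}(x)$, and then exhibits $(a'_g\otimes g)(a'_e\otimes 1)\neq(a'_e\otimes 1)(a'_g\otimes g)$ by evaluation at $x$. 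You instead prove the contrapositive: commutativity with $g=e$ forces $(a-\alpha_h(a))A_h=0$ for $a\in A_e$, the strong grading of Proposition~\ref{pbasic}(i) gives $1\in A_{h^{-1}}A_h$ and hence kills the annihilator of $A_h$, so every $\alpha_h$ is trivial on $A_e=\Pol(G/i(\hat\Gamma))$ and therefore on the points of $G/i(\hat\Gamma)$, which is exactly $G_1=G$. Both arguments are valid; the paper's is constructive and needs only a regular function separating two disjoint closed cosets, while yours is algebraically slicker but leans on identifying the points of $G/i(\hat\Gamma)$ with the $i(\hat\Gamma)$-orbits --- unproblematic here since $\hat\Gamma$ is finite and $K$ is algebraically closed. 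Your closing remark that cyclicity enters only through the symmetry of the bicharacter matches the paper exactly.
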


\begin{proof}
First assume $G \setminus G_1 \neq \emptyset$. Take an element $x$ from this set and choose $g \in \Gamma$ such that $\alpha_g(x) \not \in i(\hat{\Gamma})x$. We then have
$$
i(\hat{\Gamma})\alpha_{g^{-1}}(x) \cap i(\hat{\Gamma})x = \emptyset.
$$
Hence, for every $h \in \Gamma$, we can find a function $a_h \in \Pol(G)$ such that it vanishes on $i(\hat{\Gamma})\alpha_{g^{-1}}(x)$ and
$$
a_h(i(\psi)x)=\psi(h)\ \ \text{for all}\ \psi\in\hat\Gamma.
$$
Define $a'_h\in\Pol(G)$ by
$$
a'_h(y)= \frac{1}{|\Gamma|}\sum_{\psi \in \hat{\Gamma}}\psi(h)^{-1}a_h(i(\psi)y).
$$
Then we have $a'_h \in \Pol(G)_h$, $a'_h(x)=1$, and $a'_h(\alpha_{g^{-1}}(x))=0$. In particular we have $a'_g \alpha_g(a'_e) \neq  a'_e a'_g$ by comparing their values at $x$, and
$$
(a'_g\otimes g) \cdot (a_e'\otimes 1) \neq  (a'_e\otimes 1) \cdot (a_g'\otimes g)
$$
in the crossed product, which shows that $\Pol(G^{t, \alpha})$ is indeed noncommutative.

Conversely, assume that $G_1=G$. Then for all $x \in G$ and $g \in \Gamma$, there exists $\psi_{x,g} \in \hat{\Gamma}$ such that $\alpha_g(x)=i(\psi_{x,g})x$. If we fix $x \in G$, we obtain a bicharacter
$$
\Gamma \times \Gamma \rightarrow K^\times, \quad (g,h) \mapsto \psi_{x,g}(h).
$$
If we further assume that $\Gamma$ is cyclic, this must be a symmetric bicharacter. It follows that for all $a \in \Pol(G)_g$ and $b \in \Pol(G)_h$, we have $a\alpha_g(b) = b\alpha_h(a)$, or in other words
$$
(a \otimes g) \cdot (b \otimes h) = (b \otimes h) \cdot (a \otimes g),
$$
which shows that $\Pol(G^{t, \alpha})$ is commutative.
\end{proof}

This lemma together with Theorem~\ref{thm:quotcomprime} give the following.

\begin{theorem}\label{thm:quotcomprime2}
Let $\Gamma$ be a cyclic group of prime order, $(i, \alpha)$ be an invariant cocentral action of~$\Gamma$ on an affine algebraic group $G$ defined over an algebraically closed field $K$ of characteristic zero, and $L$ be a Hopf algebra quotient of $\Pol(G^{t,\alpha})$.
Then one of the following assertions holds:
\begin{enumerate}
\item $L$ is commutative and is isomorphic to $\Pol(H)$ for an algebraic subgroup $H \subset G/i(\hat{\Gamma})$;
\item $L$ is commutative and is isomorphic to  $\Pol(H^{t,\alpha})$ for a  $\Gamma$-stable algebraic subgroup $H\subset G$ such that $i(\hat{\Gamma})\subset H$ and  $\{x \in H \ | \ \Gamma. x \subset i(\hat{\Gamma})x\}= H$;
\item $L$ is noncommutative and is isomorphic to  $\Pol(H^{t,\alpha})$, for a  $\Gamma$-stable algebraic subgroup $H\subset G$ such that $i(\hat{\Gamma})\subset H$ and $\{x \in H \ | \ \Gamma. x \subset i(\hat{\Gamma})x\}\ne H$.
\end{enumerate}
\end{theorem}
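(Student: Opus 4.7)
The plan is to translate Theorem~\ref{thm:quotcomprime} into the language of algebraic groups and then invoke Lemma~\ref{lem:commtwi} to split the noncommutative/commutative case distinction inside the twisted branch. Setting $A = \Pol(G)$, Theorem~\ref{thm:quotcomprime} tells us that any Hopf algebra quotient $L$ of $A^{t,\alpha}$ is either a commutative quotient of $A_e = \Pol(G/i(\hat\Gamma))$, or of the form $(A/I)^{t,\overline{\alpha}}$ for some $\Gamma$-graded, $\Gamma$-stable Hopf ideal $I \subset A$. The first possibility gives conclusion~(i) directly, since a Hopf algebra quotient of $\Pol(G/i(\hat\Gamma))$ is of the form $\Pol(H)$ for an algebraic subgroup $H \subset G/i(\hat\Gamma)$.

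For the second possibility, I would translate the condition that $I$ is a $\Gamma$-graded and $\Gamma$-stable Hopf ideal of $\Pol(G)$ as follows. The Hopf ideal $I$ corresponds to an algebraic subgroup $H \subset G$. Graded-ness of $I$ means $I \subset \ker p$, i.e., the vanishing of $I$ on $i(\hat\Gamma)$, so $i(\hat\Gamma) \subset H$; and $\Gamma$-stability of $I$ amounts to $\alpha_g(H) = H$ for all $g \in \Gamma$, i.e., $H$ is $\Gamma$-stable. Consequently $A/I \simeq \Pol(H)$, and the induced invariant cocentral action $(\overline{p},\overline{\alpha})$ on $\Pol(H)$ is exactly the one attached to the pair $(i,\alpha|_H)$ viewed on $H$ (with the same embedding $i\colon\hat\Gamma \hookrightarrow Z(H)$, which is well-defined since $i(\hat\Gamma) \subset H$). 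Hence $L \simeq \Pol(H^{t,\alpha})$.

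To distinguish conclusions~(ii) and~(iii), I would apply Lemma~\ref{lem:commtwi} to the algebraic group $H$ together with its invariant cocentral action. This lemma, which uses that $\Gamma$ is cyclic in the converse direction, states that $\Pol(H^{t,\alpha})$ is commutative if and only if $H_1 = H$, where
\[
H_1 = \{x \in H \mid \Gamma.x \subset i(\hat\Gamma)x\}.
\]
Thus the commutative case yields~(ii) and the noncommutative case yields~(iii).

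The only step that requires any care is checking that the translation between $\Gamma$-graded and $\Gamma$-stable Hopf ideals of $\Pol(G)$ on the one hand, and $\Gamma$-stable algebraic subgroups $H \subset G$ containing $i(\hat\Gamma)$ on the other, is correct and that the induced invariant cocentral action on $\Pol(H)$ is the expected one. This was already indicated in the paragraph of the paper just after the definition of $\Pol(G^{t,\alpha})$, so the proof essentially amounts to assembling Theorem~\ref{thm:quotcomprime} and Lemma~\ref{lem:commtwi} via this dictionary; I do not foresee a genuine obstacle.
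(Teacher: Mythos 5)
Your proposal is correct and follows exactly the paper's route: the paper derives this theorem by combining Theorem~\ref{thm:quotcomprime} (translated into the language of algebraic subgroups, via the dictionary between $\Gamma$-graded, $\Gamma$-stable Hopf ideals of $\Pol(G)$ and $\Gamma$-stable algebraic subgroups $H\supset i(\hat\Gamma)$ given just before the statement) with Lemma~\ref{lem:commtwi} applied to $H$ to separate cases (ii) and (iii). No discrepancies to report.
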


\begin{remark}
As opposed to cases (ii) and (iii), we do not claim that (i) is actually realized, that is, given an algebraic subgroup $H \subset G/i(\hat{\Gamma})$ it is not always true that $\Pol(H)$ can be obtained as a quotient of $\Pol(G^{t,\alpha})$.
\end{remark}

For complexifications of compact Lie groups a similar result describing Hopf $*$-algebra quotients of $\Pol(G^{t,\alpha})$ was obtained in~\cite{MR3580173}*{Theorem 4.8} using C$^*$-algebraic tools.

\section{Twisting of comodule algebras}
\label{sec:twist-comod-algebr}

\subsection{Comodule algebras and their coinvariants}\label{ssec:twist-comod}

As before, let $A$ be a Hopf algebra and~$(p,\alpha)$ be an invariant cocentral action of $\Gamma$ on $A$. Recall that by \eqref{eq:comod-eqv-for-twist} we have an equivalence between the categories $\Comod(A)$ and $\Comod(A^{t,\alpha})$. We now want to consider more refined classes of comodules and comodule algebras respecting the action of $\Gamma$.

\begin{definition}
Let $\rho_V\colon V \to A \otimes V$ be a left $A$-comodule, and $\beta_V=\beta$ be a linear representation of $\Gamma$ on $V$. We say that $(V, \beta)$ is \emph{$\Gamma$-equivariant}, or that it is a \emph{$\Gamma$-$A$-comodule}, if $\rho$ is $\Gamma$-equivariant with respect to $\beta$ and $\alpha \otimes \beta$. We denote the category of $\Gamma$-$A$-comodules with the $\Gamma$-linear and $A$-colinear maps as morphisms by $\Comod(A,\Gamma,  \alpha)$.
\end{definition}

As a basic example of a $\Gamma$-$A$-comodule, we have of course $A=(A,\alpha)$ itself.

\begin{remark}
The action $\alpha$ endows $A$ with a left $K\Gamma$-module coalgebra structure, and the category just defined is the category of Doi--Hopf modules $\tensor*[^A_{K\Gamma}]{{\textbf M}(K\Gamma)}{}$~\cite{MR1198206}. The aim of our present terminology is to emphasize the $A$-comodule structure. Note that in \cite{MR1386496}, the authors focus on the category of relative Hopf modules $\tensor*[^{K\Gamma}_A]{\mathcal M}{}$ instead.
\end{remark}

It is straightforward to check that the monoidal structures on ${\rm Comod}(A)$ and ${\rm Mod}(K\Gamma)$ induce a monoidal structure on $\Comod(A,\Gamma,  \alpha)$, with the  obvious forgetful functors being strict monoidal.

\begin{definition}
A \emph{$\Gamma$-$A$-comodule algebra} is given by the following data:
\begin{itemize}
\item $(R, \rho_R \colon R \to A \otimes R)$ is a left $A$-comodule algebra, and
\item $\beta\colon \Gamma \rightarrow \Aut(R)$ is an action of $\Gamma$ on $R$ by algebra automorphisms,
\end{itemize}
such that $\rho_R \beta_g= (\alpha_g \otimes \beta_g)\rho_R$ for every $g \in \Gamma$.
\end{definition}

In other words, a $\Gamma$-$A$-comodule algebra is just an algebra object in the monoidal category $\Comod(A,\Gamma,\alpha)$.

By analogy with the construction of $A^{t,\alpha}$ we can define twistings of $\Gamma$-$A$-comodule algebras. Let $R=(R, \beta)$ be such an algebra. Recall that the $A$-comodule structure map composed with $p \otimes \iota$ defines a $K\Gamma$-comodule algebra structure on $R$, and we have a direct sum decomposition $R = \bigoplus_{g\in\Gamma} R_g$. Note that by the equivariance condition we have $\beta_g(R_h)=R_h$ for all $g,h \in \Gamma$. Now, identifying $R\otimes K\Gamma$ with $R\rtimes_\beta\Gamma$ as a linear space, we obtain an algebra structure on
$$
R^{t,\alpha}=\bigoplus_{g\in\Gamma}R_g\otimes g\subset R\rtimes_\beta\Gamma
$$
defined by that on the crossed product. We denote the $A^{t,\alpha}$-comodule $R^{t,\alpha}$ with this algebra structure by $R^{t,\alpha,\beta}$, or simply by $R^{t,\beta}$.  Similarly to Remark \ref{rem:previoustwist}, the algebra structure of $R^{t,\alpha,\beta}$ is a special instance of the construction from \cite{MR1367080}.

The following property is immediate by definition.

\begin{proposition}\label{prop:twist_coinv}
For any $\Gamma$-$A$-comodule algebra $R$, we have a canonical isomorphism $R_e \simeq (R^{t, \beta})_e$ of algebras. Furthermore, if we identify the Hopf algebras $A_e$ and $(A^{t,\alpha})_e$, then this is an isomorphism of $A_e$-comodule algebras. In particular, we have $^{\co A^{t,\alpha}}\!(R^{t, \beta}) \simeq {^{\co A}\!R}$.
\end{proposition}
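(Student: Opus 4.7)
The proof is essentially a direct unpacking of definitions, so the plan is to carry out that unpacking carefully and in the right order. The statement has three pieces: the algebra isomorphism $R_e \simeq (R^{t,\beta})_e$, its compatibility with the $A_e$-comodule structure (after the canonical identification $A_e \simeq (A^{t,\alpha})_e$), and the consequence about coinvariants. I would separate these and handle them in sequence.

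First I would pin down the $\Gamma$-grading on $R^{t,\beta}$ coming from the $A^{t,\alpha}$-coaction composed with $\tilde p = p \otimes \varepsilon$. Using the formula for the comodule equivalence $F$ recorded in~\eqref{eq:comod-eqv-for-twist}, an element $r \otimes g$ with $r \in R_g$ coacts as $r_{(-1)} \otimes g \otimes r_{(0)} \otimes g$. Applying $\tilde p \otimes \iota$ and using $p(r_{(-1)}) \otimes r_{(0)} = g \otimes r$ (since $r \in R_g$) shows that $(R^{t,\beta})_h = R_h \otimes h$. In particular $(R^{t,\beta})_e = R_e \otimes e$ as a subspace of the crossed product.

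Next I would verify that $\varphi\colon R_e \to (R^{t,\beta})_e$, $r \mapsto r \otimes e$, is an algebra isomorphism. This is immediate from the crossed-product formula, since for $r, s \in R_e$ one has $(r \otimes e)(s \otimes e) = r\beta_e(s) \otimes e = rs \otimes e$. The same direct computation shows that under the analogous map $A_e \to (A^{t,\alpha})_e$, $a \mapsto a \otimes e$ (which is an algebra isomorphism, and in fact a Hopf algebra isomorphism), the coaction $\rho_{R^{t,\beta}}$ restricted to $(R^{t,\beta})_e$ corresponds to $\rho_R$ restricted to $R_e$: if $\rho_R(r) = r_{(-1)} \otimes r_{(0)} \in A_e \otimes R_e$ for $r \in R_e$, then the coaction lands in $A_e \otimes e \otimes R_e \otimes e$, matching $\rho_R(r)$ under the identifications. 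Thus $\varphi$ is an isomorphism of $A_e$-comodule algebras.

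Finally, for the coinvariant statement I would use the observation that $^{\co A}\!R \subset R_e$: any coinvariant satisfies $\rho_R(r) = 1 \otimes r$, forcing $r$ to be homogeneous of degree $e$ (since $1 \in A_e$), and then $^{\co A}\!R = {^{\co A_e}\!R_e}$. The same reasoning applied to $R^{t,\beta}$ gives $^{\co A^{t,\alpha}}\!(R^{t,\beta}) = {^{\co (A^{t,\alpha})_e}\!(R^{t,\beta})_e}$, and the preceding paragraph lets us transport one to the other via $\varphi$. There is no real obstacle here; the only thing one must be careful about is using the correct description of the grading on $R^{t,\beta}$, which comes through the cocentral map $\tilde p$ rather than being imposed by hand.
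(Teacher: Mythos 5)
Your proof is correct and follows exactly the route the paper has in mind: the paper states this proposition with no written argument ("immediate by definition"), and your careful unpacking --- identifying $(R^{t,\beta})_h = R_h \otimes h$ via $\tilde p = p \otimes \varepsilon$, checking that $r \mapsto r \otimes e$ is an algebra and $A_e$-comodule isomorphism, and noting that coinvariants live in the degree-$e$ components --- is precisely the intended definitional verification. The only step you leave implicit is that $\rho_R(R_e) \subset A_e \otimes R_e$ (a consequence of cocentrality of $p$, used throughout the paper, e.g.\ in the formula for the comodule equivalence $F$), but this is a standard fact and not a gap.
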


\begin{remark}\label{rem:twist_coinv}
It will be useful to slightly strengthen this simple observation. Assume we are given a proper $\Gamma$-graded and $\Gamma$-stable Hopf ideal $I\subset A$ as in the previous section. Then a $\Gamma$-$A$-comodule algebra $R$ can also be viewed as a $\Gamma$-$(A/I)$-comodule algebra. The construction of~$R^{t,\beta}$ is independent of the point of view. Hence by applying the above proposition to $A/I$ instead of $A$, we get $^{\co (A/I)^{t,\bar\alpha}}\!(R^{t, \beta}) \simeq {^{\co (A/I)}\!R}$.
\end{remark}

Let assume now that $\Gamma$ is abelian. Then $A^{t,\alpha}$ is equipped with the action of $\Gamma$ by the algebra automorphisms $\tilde\alpha_g=\alpha_g\otimes\iota$ and the monoidal category $\Comod(A^{t,\alpha},\Gamma,\tilde\alpha)$ is well-defined. Similarly, if $(R,\beta)$ is a  $\Gamma$-$A$-comodule algebra, then we have an action of $\Gamma$ on $R^{t,\beta}$ by the automorphisms $\tilde\beta_g=\beta_g\otimes\iota$, so that $(R^{t,\beta},\tilde\beta)$ becomes a $\Gamma$-$A^{t,\alpha}$-comodule algebra.

The functor $F\colon\Comod(A)\to\Comod(A^{t,\alpha})$ extends in the obvious way to a functor $$\Comod(A,\Gamma,  \alpha) \to \Comod(A^{t, \alpha}, \Gamma,  \tilde{\alpha}),$$ which we continue to denote by $F$. It is easy to see that this is an equivalence of categories, but in fact the following stronger result is true.

\begin{theorem}\label{thm:equivrelat}
If $\Gamma$ is abelian, then the functor $F\colon \Comod(A,\Gamma,  \alpha) \to \Comod(A^{t, \alpha}, \Gamma,  \tilde{\alpha})$
can be enriched to an  equivalence of monoidal categories.
\end{theorem}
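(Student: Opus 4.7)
The plan is to upgrade the already-known $K$-linear equivalence $F$ to a monoidal one by writing down an explicit tensor structure $(F_2,F_0)$ on it and checking the requisite axioms. Both categories can be taken strict monoidal, so the coherence data to be verified is minimal.

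First, I would record the basic compatibility between the grading and the coaction: a short coassociativity argument shows that for $V\in\Comod(A,\Gamma,\alpha)$ the grading satisfies $\rho_V(V_g)\subset A_g\otimes V_g$, and equivariance gives $\beta_g(V_h)=V_h$; consequently $V\otimes W$ inherits the $\Gamma$-grading $(V\otimes W)_k=\bigoplus_{gh=k}V_g\otimes W_h$. My candidate for the tensorator is then
$$
F_2^{V,W}\colon F(V)\otimes F(W)\to F(V\otimes W),\quad (v\otimes g)\otimes(w\otimes h)\longmapsto v\otimes\beta^W_g(w)\otimes gh
$$
for $v\in V_g$, $w\in W_h$, extended by linearity, together with $F_0=\iota_\un$. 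Each $F_2^{V,W}$ is a linear isomorphism since $\beta^W_g$ is.

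I would then verify in order: \emph{(a) $A^{t,\alpha}$-colinearity} of $F_2$, which using the crossed-product rule $(a\otimes g)(b\otimes h)=a\alpha_g(b)\otimes gh$ and the equivariance identity $\rho_W\beta_g=(\alpha_g\otimes\beta_g)\rho_W$ reduces to a two-line calculation producing $v_{(-1)}\alpha_g(w_{(-1)})\otimes gh\otimes v_{(0)}\otimes\beta_g(w_{(0)})\otimes gh$ on both sides; \emph{(b) naturality} in $V,W$, which follows from morphisms in $\Comod(A,\Gamma,\alpha)$ being themselves $\Gamma$-equivariant; and \emph{(c) the hexagon coherence}, which after unwinding becomes the group homomorphism property $\beta^W_g\beta^W_h=\beta^W_{gh}$ and is therefore automatic. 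The triangle/unit axioms are immediate from $F_0=\iota$ and $\beta^W_e=\iota$.

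The step where the hypothesis that $\Gamma$ is abelian is genuinely used—and the only real obstacle—is \emph{(d) $\Gamma$-equivariance} of $F_2$. Comparing $\tilde\beta_k\circ F_2^{V,W}$ with $F_2^{V,W}\circ(\tilde\beta_k\otimes\tilde\beta_k)$ on $(v\otimes g)\otimes(w\otimes h)$ collapses to the identity $\beta^W_g\beta^W_k=\beta^W_k\beta^W_g$, which holds precisely because $\Gamma$ is abelian; this matches the known failure of monoidality of $F$ for general $\Gamma$. Once (a)--(d) are in place, $(F,F_2,F_0)$ is a monoidal functor, and since $F$ is already an equivalence of $K$-linear categories (via the coalgebra isomorphism $j\colon A\to A^{t,\alpha}$ extended to the $\Gamma$-equivariant setting), it is automatically a monoidal equivalence.
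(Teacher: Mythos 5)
Your proposal is correct and follows essentially the same route as the paper: the tensorator $v\otimes g\otimes w\otimes h\mapsto v\otimes\beta^W_g(w)\otimes gh$ is exactly the paper's $F_2$, the colinearity and associativity checks proceed identically, and you correctly isolate the $\Gamma$-equivariance of $F_2$ (the identity $\beta^W_g\beta^W_r=\beta^W_r\beta^W_g$, i.e.\ $\gamma_{gr}=\gamma_{rg}$) as the one place where commutativity of $\Gamma$ enters. No substantive differences to report.
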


\begin{proof}
We have to specify a natural family of isomorphisms
$$
F_2(V,W)\colon V^{t,\alpha} \otimes W^{t, \alpha} \rightarrow (V \otimes W)^{t, \alpha}
$$
in the category $\Comod(A^{t, \alpha},\Gamma,  \widetilde{\alpha})$ for $V=(V,\beta)$ and $W =(W, \gamma)$ in $\Comod(A,\Gamma, \alpha)$, satisfying the compatibility conditions with unit and associator. We claim that it can be given as
\begin{equation}\label{eq:tilde-F-def}
F_2(V,W)((v \otimes g) \otimes (w \otimes h)) = v \otimes \gamma_g(w) \otimes gh \quad (v \in V_g, w \in W_h).
\end{equation}
Indeed, for $v \in V_g$ and $w \in W_h$, we have $v\otimes \gamma_g(w) \in (V \otimes W)_{gh}$, hence our map is well-defined. It is a linear bijection, since
$$
(V \otimes W)^{t, \alpha}= \bigoplus_{g \in \Gamma} (V \otimes W)_g \otimes g =
\bigoplus_{\substack{g \in \Gamma\\ rs=g}} V_r \otimes W_s \otimes g = \bigoplus_{r,s \in \Gamma} V_r \otimes W_s \otimes rs
$$
and our map identifies $V_g \otimes g \otimes V_h \otimes h$ and  $V_g \otimes V_h \otimes gh$. Next let us verify the $\Gamma$-linearity. For $r \in \Gamma$, we have
\begin{multline*}
F_2(V,W)(r\cdot(v \otimes g \otimes w \otimes h))=F_2(V,W)(\beta_r(v) \otimes g \otimes \gamma_r(w)\otimes h)=\beta_r(v) \otimes  \gamma_{gr}(w)\otimes gh \\
= \beta_r(v) \otimes \gamma_{rg}(w)\otimes gh = r\cdot (v \otimes  \gamma_{g}(w)\otimes gh) =  r\cdot F_2(V,W)(v \otimes g \otimes w \otimes h),
\end{multline*}
hence $F_2(V,W)$ is $\Gamma$-linear. The $A^{t,\alpha}$-colinearity comes from the identities
\begin{align*}
 v_{(-1)}\gamma_g(w)_{(-1)} \otimes gh \otimes  & v_{(0)} \otimes \gamma_g(w)_{(0)} \otimes gh  =
 v_{(-1)}\alpha_g(w_{(-1)})\otimes  gh \otimes v_{(0)} \otimes \gamma_g(w_{(0)}) \otimes gh  \\
&= \left((v_{(-1)}\otimes g)(w_{(-1)}\otimes h)\right) \otimes F_2(V,W)(v_{(0)} \otimes g  \otimes w_{(0)} \otimes h).
\end{align*}

We now know that $F_2(V,W)$ is a natural transformation $F_2\colon F(V) \otimes F(W) \to F(V \otimes W)$. Since the monoidal unit is represented by $K$ concentrated at degree $e$, $F_2$ is compatible with the unit. As for the compatibility with associator, since both $\Comod(A)$ and $\Comod(A^{t, \alpha})$ are strict it amounts to verifying the identity
$$
F_2({F(V\otimes W),Z}) (F_2(V,W) \otimes \iota) = F_2({V, F(W\otimes Z)}) (\iota \otimes F_2({W,Z}))
$$
for $(V, \beta)$, $(W, \gamma)$, and $(Z, \theta)$ in $\Comod(A,\Gamma,\alpha)$. It can be directly checked that both sides are characterized by
$$
(v \otimes g) \otimes (w \otimes h) \otimes (z \otimes k) \mapsto v \otimes \gamma_g(w) \otimes \theta_{g h}(z) \otimes g h k
$$
for $v \in V_g$, $w \in W_h$, and $z \in Z_k$. This completes the proof.
\end{proof}

Consequently, $F$ induces an equivalence between the respective categories of algebra objects, that is, the categories of $\Gamma$-$A$-comodule algebras and of $\Gamma$-$A^{t,\alpha}$-comodule algebras. It is easy to check that this recovers the construction of $R^{t,\beta}$ from $(R,\beta)$ introduced above for arbitrary $\Gamma$.

\begin{remark}
A similar result can be proved in the $\tau$-setting discussed in Remark~\ref{rem:previoustwist}; in particular, a form of the above theorem is true for non-abelian groups. Namely, let $A$ be a $\Gamma$-graded Hopf algebra equipped with coalgebra automorphisms $\tau_g$ as in that remark. Denote by $\Comod_\tau(A,\Gamma)$ the category of triples $(V,\rho,\beta)$, where $(V,\rho)$ is a  left $A$-comodule (inheriting in this way a $\Gamma$-grading) and $\beta$ is a (not necessarily multiplicative) map from~$\Gamma$ into the group of linear automorphisms of $V$ such that $\rho\beta_g=(\tau_g\otimes\beta_g)\rho$, so that the automorphisms $\beta_g$ are graded automorphisms of $V$. Define a tensor product on this category by taking the usual tensor product of  comodules~$V$ and~$W$ and equipping it with the maps
$$
(\beta\otimes \gamma)_g(v\otimes w)=\beta_g(v)\otimes\gamma_{gh}\gamma_h^{-1}(w), \ v\in V_h,\ w\in W.
$$
This makes $\Comod_\tau(A,\Gamma)$ into a strict monoidal category.

Consider now the $\tau$-twisting $A^\tau$ and equip it with the maps $\tilde\tau_g=\tau_g^{-1}$, so that $(A^\tau)^{\tilde\tau}=A$. Any $A$-comodule $V$ can be considered as a $A^\tau$-comodule $V^\tau$, and if $V$ is equipped with linear automorphisms $\beta_g$ as above, then we equip $V^\tau$ with the linear automorphisms $\tilde\beta_g=\beta_g^{-1}$. We thus get a functor $F\colon\Comod_\tau(A,\Gamma)\to\Comod_{\tilde\tau}(A^\tau,\Gamma)$, $F(V)=V^\tau$. Similarly to the above theorem, it can then be easily checked that $F$, being equipped with the tensor structure
$$
F_2(V,W)\colon V^\tau\otimes W^\tau\to (V\otimes W)^\tau,\ \ v\otimes w\mapsto v\otimes\gamma_h(w),\ \ v\in V_h,\ w\in W,
$$
becomes an equivalence of monoidal categories.
\end{remark}

\subsection{Quantum planes}\label{ssec:q-plane}

As an example, let us explain how the framework developed above can be used to obtain information on the fixed point algebras for quantum groups actions on quantum planes. Throughout this section we assume that $K$ is algebraically closed and, as usual, has characteristic zero.

Let us recall the two-parameter quantum group $\GL_{p,q}(2)$ for $p,q \in K^\times$ which can be found in, e.g., \cite{MR1432363}. Its coordinate algebra $\Pol(\GL_{p,q}(2))$ is the algebra presented by generators
$a$, $b$, $c$, $d$, $\delta^{-1}$ subject to the relations
\begin{gather*}
ba=qab, \ dc=qcd, \ ca =pac, \ db=pbd, \ qcb=pbc,\\
da-ad= (p-q^{-1})bc, \ (ad-q^{-1}bc)\delta^{-1}=1=\delta^{-1}(ad-q^{-1}bc).
\end{gather*}
The  Hopf algebra structure on $\Pol(\GL_{p,q}(2))$ is given by the usual formulas
\begin{gather*}
\Delta(a)=a\otimes a +b \otimes c, \ \Delta(b)=a \otimes b +b\otimes d, \ \Delta(c)=c\otimes a+d\otimes c, \
\Delta(d)=c \otimes b + d\otimes d,\\
\Delta(\delta^{-1})=\delta^{-1}\otimes \delta^{-1}, \ \varepsilon(a)=\varepsilon(d)=\varepsilon(\delta^{-1}) = 1, \ \varepsilon(b)=\varepsilon(c)=0,\\
S(a)=d\delta^{-1}, \ S(b)=-qb\delta^{-1}, \ S(c)=-q^{-1}c\delta^{-1}, \ S(d)=a\delta^{-1}, \ S(\delta^{-1})=ad-q^{-1}bc.
\end{gather*}
Let us denote the quantum plane algebra $K\langle x, y \mid y x = p x y \rangle$ by $K_p [x,y]$. There is an algebra homomorphism $\rho\colon K_p [x,y] \rightarrow \Pol(\GL_{p,q}(2)) \otimes K_p[x,y]$ characterized by
\begin{equation*}
\quad \rho(x) = a \otimes x + b \otimes y, \quad \rho(y) = c \otimes x + d \otimes y
\end{equation*}
This defines a left $\Pol(\GL_{p,q}(2))$-comodule algebra structure on $K_p[x,y]$.

Denote by $\Gamma_q$ the cyclic group of the same order as $q$ as an element in the multiplicative group $K^\times$, and fix its generator $g$. There is a cocentral Hopf algebra homomorphism
\begin{equation*}
 p\colon \Pol(\GL_{p,q}(2)) \rightarrow K\Gamma_q, \quad
\begin{pmatrix}
 a & b \\
c & d
\end{pmatrix}
\mapsto
\begin{pmatrix}
 g & 0 \\
0 & g
\end{pmatrix}.
\end{equation*}
The dual group $\hat{\Gamma}_q$ can be identified with a subgroup of $K^\times$ by taking the image of $g$: it becomes $\mu_N(K)$ if $q$ has finite order $N\geq 1$, otherwise it is $K^\times$. The above Hopf algebra homomorphism gives an identification of $\hat{\Gamma}_q$ as a quantum subgroup of $\GL_{p,q}(2)$. For a quantum subgroup $G \subset \GL_{p,q}(2)$, we write $\hat{\Gamma}_q \subset G$ if the corresponding surjective Hopf algebra homomorphism $\Pol(\GL_{p,q}(2)) \rightarrow \Pol(G)$ factorizes $p$.

As shown in~\cite{MR3580173}*{Example 4.11}, $\GL_{q^{-1},q}(2)$ is a graded twisting of $\GL(2)$, and more generally $\GL_{q^{-1}\xi,q\xi^{-1}}(2)$ is a graded twisting of $\GL_{q^{-1},q}(2)$, for every $\xi \in K^\times$. Denote by $\alpha_g$  the Hopf algebra automorphism of $\Pol(\GL(2))$ defined by
$$\alpha_g\left(\begin{pmatrix}
 a & b \\
c & d
\end{pmatrix}\right)= \begin{pmatrix}
 a & q^{-1}b \\
qc & d
\end{pmatrix}$$
and consider the group morphism $\alpha\colon \Gamma_q \rightarrow \Aut(\Pol(\GL(2))$, $g \mapsto \alpha_g$. We then have that $(p,\alpha)$, or $(i, \alpha)$ with the inclusion map $i \colon \hat{\Gamma}_q \to Z(\GL(2))$ as scalar matrices, is an invariant cocentral action of $\Gamma_q$ on $\Pol(\GL(2))$. This leads to the identification
\begin{equation*}
\Pol(\GL_{q^{-1},q}(2)) \simeq \Pol(\GL(2))^{t, \alpha}, \quad
\begin{pmatrix}
 a & b \\
 c & d
\end{pmatrix} \mapsto \begin{pmatrix}
 a\otimes g & b \otimes g\\
 c\otimes g & d\otimes g
\end{pmatrix}.
\end{equation*}

By Proposition~\ref{prop:correspideal}, the Hopf ideals in $\IdSet^{p,\alpha}(\Pol(\GL(2))$ correspond to certain subgroups of~$\GL(2)$. Let us give them a name.

\begin{definition}
We say that an algebraic subgroup $H \subset \GL(2)$ is \emph{$q$-admissible} if $\hat{\Gamma}_q \subset H$ and $\alpha_g(H)=H$.
\end{definition}

\begin{theorem}\label{thm:gl2}
 Let $G \subset \GL_{q^{-1},q}(2)$ be a quantum subgroup such that $\hat{\Gamma}_q \subset G$.
Then there exists a $q$-admissible algebraic subgroup
$G'\subset \GL(2)$  such that
$$K_{q^{-1}}[x,y]^G \simeq K[x,y]^{G'}$$
as algebras.
\end{theorem}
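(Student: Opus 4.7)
The plan is to reduce the theorem to the correspondence results of Section~\ref{sec:twist-groups-acti} and the invariance of coinvariants under graded twisting from Section~\ref{sec:twist-comod-algebr}, by recognizing both $\Pol(\GL_{q^{-1},q}(2))$ and $K_{q^{-1}}[x,y]$ as graded twistings of their classical counterparts.

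First, using the identification $\Pol(\GL_{q^{-1},q}(2)) \simeq \Pol(\GL(2))^{t,\alpha}$, the hypothesis $\hat\Gamma_q \subset G$ says that the quotient map $\Pol(\GL(2))^{t,\alpha} \to \Pol(G)$ factorizes the cocentral homomorphism $p$, so its kernel is $\Gamma_q$-graded. Since $\Pol(\GL(2))$ is commutative, Lemma~\ref{lem:correspidealcommutative} gives $\Gamma_q$-stability for free, and Proposition~\ref{prop:correspideal} then identifies this kernel with $j(I)$ for a proper $\Gamma_q$-graded, $\Gamma_q$-stable Hopf ideal $I \subset \Pol(\GL(2))$. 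Such an $I$ corresponds precisely to a $q$-admissible algebraic subgroup $G' \subset \GL(2)$, and the same proposition further yields $\Pol(G) \simeq \Pol(G')^{t,\bar\alpha}$.

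Second, I would endow $K[x,y]$ with the $\Gamma_q$-action $\beta$ defined on generators by $\beta_g(x)=x$ and $\beta_g(y)=qy$. A short verification using $\alpha_g(a)=a$, $\alpha_g(b)=q^{-1}b$, $\alpha_g(c)=qc$, $\alpha_g(d)=d$ shows that the natural coaction $\rho(x) = a \otimes x + b \otimes y$, $\rho(y) = c \otimes x + d \otimes y$ satisfies the equivariance condition $\rho\beta_g = (\alpha_g \otimes \beta_g)\rho$, so $(K[x,y],\beta)$ is a $\Gamma_q$-$\Pol(\GL(2))$-comodule algebra. Since $p(a)=p(d)=g$ and $p(b)=p(c)=0$, both $x$ and $y$ lie in $K[x,y]_g$, and computing the twisted multiplication inside $K[x,y]\rtimes_\beta\Gamma_q$ yields
\[
(y \otimes g)(x \otimes g) = yx \otimes g^2 = xy \otimes g^2 = q^{-1}(x \otimes g)(y \otimes g).
\]
Thus $K[x,y]^{t,\beta} \simeq K_{q^{-1}}[x,y]$ as algebras, and inspection of the induced $\Pol(\GL(2))^{t,\alpha}$-coaction (sending $x\otimes g$ to $(a\otimes g)\otimes(x\otimes g)+(b\otimes g)\otimes(y\otimes g)$, and similarly for $y\otimes g$) confirms this is an isomorphism of $\Pol(\GL_{q^{-1},q}(2))$-comodule algebras.

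Finally, treating $K[x,y]$ as a $\Gamma_q$-$\Pol(\GL(2))$-comodule algebra and applying Remark~\ref{rem:twist_coinv} to the ideal $I$ constructed in the first step yields
\[
K_{q^{-1}}[x,y]^G \;\simeq\; {}^{\co \Pol(G')^{t,\bar\alpha}}\!\bigl(K[x,y]^{t,\beta}\bigr) \;\simeq\; {}^{\co \Pol(G')}\!K[x,y] \;=\; K[x,y]^{G'},
\]
as required. I do not foresee a serious obstacle: the main point is to pin down $\beta_g(x)=x$, $\beta_g(y)=qy$ as the unique $\Gamma_q$-action making $\rho$ equivariant while reproducing the relation $yx = q^{-1}xy$ upon twisting. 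Once this is in place, the correspondence of subgroups from Section~\ref{sec:twist-groups-acti} and the preservation of coinvariants from Section~\ref{sec:twist-comod-algebr} deliver the conclusion.
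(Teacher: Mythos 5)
Your proposal is correct and follows essentially the same route as the paper's own proof: identify $\ker f$ as a $\Gamma_q$-graded, hence (by Lemma~\ref{lem:correspidealcommutative}) $\Gamma_q$-stable Hopf ideal corresponding via Proposition~\ref{prop:correspideal} to a $q$-admissible $G'$, then realize $K_{q^{-1}}[x,y]$ as $K[x,y]^{t,\beta}$ with $\beta_g(x)=x$, $\beta_g(y)=qy$ and invoke Proposition~\ref{prop:twist_coinv} and Remark~\ref{rem:twist_coinv}. The only difference is that you spell out the equivariance and twisted-relation checks that the paper leaves as ``a simple matter to verify,'' and those computations are right.
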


\begin{proof}
Let $G \subset \GL_{q^{-1},q}(2)$ be a quantum subgroup such that $\hat{\Gamma}_q \subset G$, and consider the corresponding surjective Hopf algebra homomorphism $f\colon \Pol(\GL_{q^{-1},q}(2)) \rightarrow \Pol(G)$. The assumption $\hat{\Gamma}_q \subset G$ precisely means that $\ker f$ is $\Gamma_q$-graded, hence, by Lemma~\ref{lem:correspidealcommutative}, $\ker f$ is also $\Gamma_q$-stable. By Proposition~\ref{prop:correspideal} there exists a $q$-admissible algebraic subgroup $H\subset \GL(2)$ such that $\Pol(G) \simeq \Pol(H^{t,\alpha})$.

Now, if we let $G'=H$, in order to prove the theorem, by Proposition~\ref{prop:twist_coinv} and Remark~\ref{rem:twist_coinv} it suffices to show that there exists an action $\beta$ of $\Gamma_q$ on $K[x,y]$ turning $K[x,y]$ into a $\Gamma_q$-$\Pol(\GL(2)$-comodule algebra such that
$$
K[x,y]^{t,\beta}\simeq K_{q^{-1}}[x,y]
$$
as $\Pol(\GL_{q^{-1},q}(2))$-comodule algebras. We define the action $\beta$ by $$\beta_g(x)=x \ {\rm and} \ \beta_g(y)=q y.$$ It is then a simple matter to verify that the map
\begin{equation*}
 K_{q^{-1}}[x,y] \rightarrow K[x,y]^{t, \beta}, \quad x \mapsto x \otimes g, \quad y \mapsto y\otimes g,
\end{equation*}
defines the required isomorphism.
\end{proof}

\begin{remark}
The theorem is of little interest (and trivial) if $q$ is not a root of unity. Indeed in that case we have $K^\times \subset G$, and $K_{q^{-1}}[x,y]^G\subset K_{q^{-1}}[x,y]^{K^\times}=K$.
\end{remark}

It is well-known that $\Pol(\GL_{q^{-1},q}(2))$ is a $2$-cocycle twisting of $\Pol(\GL(2))$ for a $2$-cocycle induced from the subgroup $T_q$ of diagonal matrices in $\GL(2)$ with entries in $\hat{\Gamma}_q$, see for example~\cite{MR1432363}. Hence a statement similar to Theorem \ref{thm:gl2} can be obtained from the usual transport result in the cocycle twisting case, for quantum subgroups $G$ containing $T_q$. Our formulation is more general, and as seen in the next corollary it includes $\SL_{-1}(2)$ and its quantum subgroups, which are not $2$-cocycle twistings of ordinary groups.

\begin{corollary}\label{cor:invSL-1(2)}
 Let $G \subset \SL_{-1}(2)$ be a  quantum subgroup. Assume that $G$ is nonclassical or, more generally, that  $\{\pm 1\}\subset G$.
 Then there exists a $(-1)$-admissible algebraic subgroup
$G'\subset \SL(2)$  such that
$$K_{-1}[x,y]^G \simeq K[x,y]^{G'}$$
as algebras. 
\end{corollary}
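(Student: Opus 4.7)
The plan is to deduce the corollary from Theorem~\ref{thm:gl2} applied with $q=-1$. Two points need to be addressed: first, that the nonclassical hypothesis on $G$ already implies $\{\pm 1\}\subset G$, so that the ``more generally'' phrasing is justified and the only hypothesis actually used below is $\{\pm 1\}\subset G$; and second, that the $(-1)$-admissible subgroup produced by Theorem~\ref{thm:gl2} automatically sits in $\SL(2)$ when $G\subset\SL_{-1}(2)$. A preliminary observation I would record is that the invariant cocentral action $(i,\alpha)$ of $\Gamma_{-1}$ on $\GL(2)$ defined before Theorem~\ref{thm:gl2} restricts to such an action on $\SL(2)$: the scalar matrices $\pm I$ lie in $\SL(2)$, and $\alpha_g$ preserves $\delta=ad-bc$ because $\alpha_g(bc)=(-b)(-c)=bc$. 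Hence $(\delta-1)\subset\Pol(\GL(2))$ is a $\Gamma$-graded and $\Gamma$-stable Hopf ideal, and Proposition~\ref{prop:correspideal} gives an identification $\Pol(\SL_{-1}(2))\simeq\Pol(\SL(2))^{t,\alpha}$ compatible with $\Pol(\GL_{-1,-1}(2))\simeq\Pol(\GL(2))^{t,\alpha}$.

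For the first point, assume that $G$ is nonclassical, i.e.\ that $\Pol(G)$ is noncommutative. I would then apply Theorem~\ref{thm:quotcomprime2} to $\Pol(G)$ viewed as a Hopf algebra quotient of $\Pol(\SL(2))^{t,\alpha}$ via the above identification. Cases~(i) and~(ii) of that theorem both produce commutative $\Pol(G)$, so only case~(iii) can occur, giving $\Pol(G)\simeq\Pol(H^{t,\alpha})$ for some $\Gamma$-stable algebraic subgroup $H\subset\SL(2)$ with $\{\pm 1\}\subset H$. Chasing the resulting factorization of the cocentral homomorphism $\tilde p\colon\Pol(\SL_{-1}(2))\to K\Gamma_{-1}$ through $\Pol(G)$ then yields $\{\pm 1\}\subset G$ as a quantum subgroup.

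For the second point, assume $\{\pm 1\}\subset G$ and apply Theorem~\ref{thm:gl2} to $G$ regarded as a quantum subgroup of $\GL_{-1,-1}(2)$, obtaining a $(-1)$-admissible $G'\subset\GL(2)$ and an algebra isomorphism $K_{-1}[x,y]^G\simeq K[x,y]^{G'}$. To upgrade the containment to $G'\subset\SL(2)$, I would observe that, by construction in the proof of Theorem~\ref{thm:gl2}, the ideal $I=\ker(\Pol(\GL(2))\to\Pol(G'))$ satisfies $j(I)=\ker(\Pol(\GL_{-1,-1}(2))\to\Pol(G))$. Since $G\subset\SL_{-1}(2)$, this kernel contains the kernel $j((\delta-1))$ of the projection $\Pol(\GL_{-1,-1}(2))\to\Pol(\SL_{-1}(2))$, so $I\supset(\delta-1)$ and hence $G'\subset\SL(2)$.

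The only genuinely substantive step is the reduction in the second paragraph from ``$G$ nonclassical'' to the containment $\{\pm 1\}\subset G$, which relies on the classification in Theorem~\ref{thm:quotcomprime2} and the fact that the relevant noncommutative quotients are precisely the twisted ones with $\{\pm 1\}\subset H$. The remainder is careful bookkeeping of the identifications afforded by Proposition~\ref{prop:correspideal} and a direct application of the already established Theorem~\ref{thm:gl2}.
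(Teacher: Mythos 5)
Your proposal is correct and follows essentially the same route as the paper's proof: Theorem~\ref{thm:quotcomprime2} is used to show that a nonclassical $G$ necessarily satisfies $\{\pm 1\}\subset G$, and the result is then deduced from Theorem~\ref{thm:gl2}. The extra bookkeeping you supply (restricting the cocentral action to $\SL(2)$ via the ideal $(\delta-1)$ and checking that the subgroup $G'$ produced by Theorem~\ref{thm:gl2} actually lies in $\SL(2)$) is accurate and merely makes explicit what the paper leaves implicit.
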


\begin{proof}
The fact that if $G$ is nonclassical then $\{\pm 1\}\subset G$ (and, moreover, $G=H^{t,\alpha}$ for a $(-1)$-admissible algebraic subgroup
$H\subset \SL(2)$) follows from Theorem~\ref{thm:quotcomprime2}. Therefore the result is a consequence of Theorem~\ref{thm:gl2}.
\end{proof}

Note that by Lemma \ref{lem:commtwi} a quantum group $G\subset \SL_{-1}(2)$ with $\{\pm 1\}\subset G$ is nonclassical if and only if the corresponding $G'\subset \SL(2)$ contains a matrix that is neither diagonal nor anti-diagonal.

\begin{remark}
A less precise version of Corollary \ref{cor:invSL-1(2)} can be obtained more directly as follows.  Let $G \subset \SL_{-1}(2)$ be a  quantum subgroup with $\{\pm 1\}\subset G$. We get a Hopf algebra  exact sequence
\begin{equation*}
K \to \mathcal O(H) \to \mathcal O(G) \to K\mathbb Z_2 \to
K\end{equation*}
where $H\subset {\rm PSL}(2)$ is an algebraic subgroup. Hence we have $K_{-1}[x,y]^G \simeq (K_{-1}[x,y]^{\mathbb Z_2})^H$. It is straightforward that $K_{-1}[x,y]^{\mathbb Z_2}=K[x^2,xy,y^2] = K[x,y]^{\mathbb Z_2}$, so we have
 $K_{-1}[x,y]^G \simeq (K[x,y]^{\mathbb Z_2})^H \simeq K[x,y]^{G'}$, where $G'=\pi^{-1}(H)$, with $\pi\colon \SL(2) \rightarrow {\rm PSL}(2)$ the canonical surjection. This direct reasoning, however, does not detect the $(-1)$-admissibility of $G'$.
\end{remark}

Our technique applies as well to actions on quantum Weyl algebras. Recall \cite{MR1176901} that the quantum
Weyl algebra $A_1(q^{-1})$ is the algebra presented by generators $x$, $y$ subject to the relation $yx-q^{-1}xy=1$.
The quantum group $\SL_{q^{-1}}(2)$ acts on $A_1(q^{-1})$, with the action given by  the same formula as for the action on $K_{q^{-1}}[x,y]$. The algebra $A_1(-1)$ is denoted $W_2$ in \cite{arXiv:1501.07881}, where the invariants under finite group actions are studied, see \cite{MR3119216} as well. We denote the ordinary Weyl algebra $A_1(1)$ simply by $A_1$. Similarly to the previous corollary, we have the following result, which in many cases reduces the invariant theory for a (quantum) group action on $W_2$ to the invariant theory for a group action of $A_1$ (see, e.g., \cite{MR1045737}).

\begin{theorem}
Let $G \subset \SL_{-1}(2)$ be a  quantum subgroup. Assume that $G$ is nonclassical or, more generally, that  $\{\pm 1\}\subset G$. Then there exists a $(-1)$-admissible algebraic subgroup
$G'\subset \SL(2)$  such that
$W_2^G \simeq A_1^{G'}$
as algebras.
\end{theorem}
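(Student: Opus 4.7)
The proof is expected to follow the template of Theorem~\ref{thm:gl2} and Corollary~\ref{cor:invSL-1(2)} essentially verbatim. First, by Theorem~\ref{thm:quotcomprime2} (or the argument at the start of the proof of Corollary~\ref{cor:invSL-1(2)}), the hypothesis $\{\pm 1\} \subset G$ yields a $(-1)$-admissible algebraic subgroup $G' \subset \SL(2)$ with $\Pol(G) \simeq \Pol(G'^{t,\alpha})$ as Hopf algebras, via the correspondence of Proposition~\ref{prop:correspideal} and Lemma~\ref{lem:correspidealcommutative}.

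Next, mirroring the construction for $K[x,y]$ in the proof of Theorem~\ref{thm:gl2}, one equips $A_1$ with the $\Gamma_{-1}$-action $\beta_g(x) = x$, $\beta_g(y) = -y$, making it a $\Gamma_{-1}$-$\Pol(\SL(2))$-comodule algebra, and identifies the graded twisting $A_1^{t,\beta}$ with $W_2$ as a $\Pol(\SL_{-1}(2))$-comodule algebra via $x \mapsto x \otimes g$, $y \mapsto y \otimes g$. Under this identification the Weyl relation $yx - xy = 1$ in $A_1$ translates into
\[
(y \otimes g)(x \otimes g) + (x \otimes g)(y \otimes g) = (yx - xy) \otimes e = 1 \otimes e,
\]
which is precisely the defining relation $yx + xy = 1$ of $W_2$. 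The isomorphism $W_2^G \simeq A_1^{G'}$ then follows from Proposition~\ref{prop:twist_coinv} together with Remark~\ref{rem:twist_coinv}, applied to $(A_1,\beta)$ with respect to the quotient $\Pol(\SL(2)) \to \Pol(G')$ induced by $G' \subset \SL(2)$.

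The principal obstacle is the construction of $\beta$ in the second step: the prescription $\beta_g(y) = -y$ does not extend to a genuine algebra automorphism of $A_1$, since $\beta_g(yx - xy - 1)$ computes to $-yx + xy - 1 = -2$ rather than to $0$. Overcoming this likely requires either invoking the more general $\tau$-twisting framework of Remark~\ref{rem:previoustwist}, where $\beta$ need only be a graded linear automorphism satisfying a suitable compatibility condition, or else a direct verification at the level of the PBW basis that the product on $A_1^{t,\beta}$ is associative and realizes the Weyl-like relation of $W_2$. Once this is handled, the coinvariants identity of Proposition~\ref{prop:twist_coinv} propagates the quantum-plane isomorphism $K_{-1}[x,y]^G \simeq K[x,y]^{G'}$ of Theorem~\ref{thm:gl2} to the Weyl-algebra setting in exactly the same manner.
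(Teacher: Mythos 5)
You have put your finger on exactly the right spot, but the obstacle you flag is fatal rather than technical, and neither of your proposed repairs can succeed. The decisive point is that the degree-$e$ component is an invariant of \emph{any} twisting of the kind considered here: by Proposition~\ref{prop:twist_coinv} one has $(R^{t,\beta})_e\simeq R_e$ as algebras, and the same holds for a Zhang-type $\tau$-twist as in Remark~\ref{rem:previoustwist}, since $\tau_e=\iota$ forces $a\cdot_\tau b=ab$ for $a\in A_e$. Now compare the even parts. In $W_2$ one has $yx=1-xy$, hence $yx^2=(1-xy)x=x-x(1-xy)=x^2y$, so $x^2$ (and likewise $y^2$) is \emph{central} and $(W_2)_e=K[x^2,y^2]\oplus K[x^2,y^2]xy$ is commutative. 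In $A_1$ one has $yx^2=x^2y+2x$, hence $(xy)x^2-x^2(xy)=2x^2\ne 0$, so $(A_1)_e$ is noncommutative. Therefore $W_2$ is not isomorphic to $A_1^{t,\beta}$ for any $\beta$, nor to any $\tau$-twist of $A_1$, and no amount of care with the framework will produce one. Concretely: the equivariance condition forces $\beta_g$ to act on $\mathrm{span}(x,y)$ as $\lambda\,\mathrm{diag}(1,-1)$, and such a map preserves $yx-xy=1$ only when its determinant $-\lambda^2$ equals $1$, i.e.\ $\lambda=\pm i$, which gives an automorphism of $A_1$ of order $4$, not an action of $\Gamma_{-1}=\Z/2$; while the PBW-level definition $\beta_g(x^iy^j)=(-1)^jx^iy^j$ yields a non-associative product, e.g.\ $(x\cdot y)\cdot x=-x^2y-x$ but $x\cdot(y\cdot x)=-x^2y+x$.

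Consequently no proof following the template of Theorem~\ref{thm:gl2} and Corollary~\ref{cor:invSL-1(2)} exists, and in fact the even-part computation puts the statement itself in doubt. Taking $G=\{\pm 1\}=\hat\Gamma_{-1}$ (which satisfies the hypothesis $\{\pm1\}\subset G$), one gets $W_2^G=(W_2)_e$, a commutative domain of Gelfand--Kirillov dimension $2$; on the other hand $A_1^{G'}$ is noncommutative (indeed simple, by the results of Alev--Hodges--Velez cited in the paper) for every finite $G'$, and has Gelfand--Kirillov dimension at most $1$ for every infinite algebraic $G'\subset\SL(2)$, so no $G'$ can work. The same argument applies to every finite $G$ with $\{\pm1\}\subset G$, since then $W_2^G\subset(W_2)_e$ is commutative of dimension $2$. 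The paper offers no proof of this theorem, only the assertion that it is ``similar to the previous corollary''; the similarity breaks down precisely where you identified, because the scaling $y\mapsto qy$ preserves the homogeneous quantum-plane relation but not the inhomogeneous Weyl relation. You should therefore not attempt to complete the argument as proposed, but rather flag the statement itself for revision.
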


\section{Graded twisting of module categories}
\label{sec:categorical-picture}

In this section we introduce twistings of equivariant module categories, which is a categorical counterpart of the construction in Section~\ref{sec:twist-comod-algebr}. Throughout this section we assume that all categories (except for $\underline{\Gamma}$ and $\underline{\Aut}(\DD)$), and correspondingly functors between them, are $K$-linear.

\subsection{Equivariant module categories}\label{ssec:twist-mod-cat}

Let $\DD$ be a category. By a weak action of~$\Gamma$ on~$\DD$ we mean a monoidal functor $\beta\colon\underline{\Gamma} \to \underline{\Aut}(\DD)$. In this case we also say that $\DD$ is a \emph{$\Gamma$-equivariant category}. In other words, a weak action is given by the following data:
 \begin{itemize}
 \item a family of autoequivalences $(\beta^g)_{g\in\Gamma}$ of $\DD$, with $\beta^e\simeq\Id_\DD$,
 \item natural isomorphisms $\eta_\beta^{g,h}\colon\beta^g \beta^h \to \beta^{g h}$,
 \end{itemize}
 satisfying the compatibility condition $\eta_\beta^{g, h k} \beta^g(\eta_\beta^{h,k}) = \eta_\beta^{g h, k} \eta_\beta^{g h}$. A weak action is called \emph{strict} if $\beta^e = \Id_\DD$ and $\eta_\beta^{g,h}$ are the identity transformations.

\smallskip

Let $\CC$ be a monoidal category. Recall~\citelist{\cite{MR1184424}\cite{MR1604452}\cite{MR1976459}} that a category $\DD$ is said to be a \emph{right $\CC$-module category} if we are given a monoidal functor $\CC^{\otimes\mathrm{op}}\to\underline{\End}(\DD)$, or, more concretely, a bifunctor $\DD \times \CC \to \DD, (X, U) \mapsto X \times U$ and natural isomorphisms
\begin{equation*}\label{eq:mod-cat-assoc}
\nu_X\colon X\times\un\to X\ \ \text{and}\ \ \mu^{U,V}_X\colon (X\times U)\times V \to X \times (U\otimes V)
\end{equation*}
satisfying standard compatibility conditions.
When $\CC$ is strict, we say that the right $\CC$-module category structure on $\DD$ is \emph{strict} if $\nu_X$ and $\mu^{U,V}_X$ are the identity morphisms. In general the morphisms $\nu$ are determined by $\mu$, so we will usually omit them.

In this terminology a weak action of $\Gamma$ on $\DD$ is the same thing as the structure of a right $\underline{\Gamma}$-module category on $\DD$: given a weak action $\beta \colon \Gamma \curvearrowright \DD$, the module category structure is defined by $X\times g=\beta^{g^{-1}}(X)$ and $\mu^{g,h}=\eta^{h^{-1},g^{-1}}_\beta$.

When $\DD$ and $\DD'$ are right $\CC$-module categories, a $\CC$-module functor from $\DD$ to $\DD'$ is given by a pair $(F, \theta)$, where $F$ is a functor $\DD \to \DD'$ and $\theta$ is a family of natural isomorphisms $F(X) \times U \to F(X \times U)$, compatible in the obvious way with the isomorphisms $\mu^{U,V}_X$ on both categories.

Applying this to the case $\CC=\underline{\Gamma}$ we get the notion of a $\Gamma$-equivariant functor between $\Gamma$-equivariant categories. This can be rephrased in terms of the data defining weak actions. Namely, let $\alpha\colon \Gamma \curvearrowright \DD$ and $\beta \colon \Gamma \curvearrowright \DD'$ be weak actions of $\Gamma$. Then a $\Gamma$-equivariant functor from~$\DD$ to~$\DD'$ is given by a functor $F \colon \DD \to \DD'$ and natural isomorphisms of functors $\zeta^g\colon F \alpha^g \to \beta^g F$ such that the diagrams
\begin{equation}\label{eq:eqv-ftr}
\xymatrix@C=4em{
 & F \alpha^h \alpha^k (X) \ar[rd]^{\zeta^h_{\alpha^k (X)}} \ar[ld]_{F(\eta_\alpha^{h,k})} & \\
F \alpha^{h k} (X) \ar[d]_{\zeta^{h k}_X} &  & \beta^h F \alpha^k (X) \ar[d]^{\beta^h(\zeta^k_X)} \\
 \beta^{h k} F (X) & & \beta^h \beta^k F (X) \ar[ll]^{\eta_\beta^{h,k}}
}
\end{equation}
commute for all $X \in \DD$. By abuse of notation we also write $F$ instead of $(F, (\zeta^g)_g)$. Such an equivariant functor is said to be \emph{strict} if $\zeta^g$ is the identity for all $g$. For monoidal $\Gamma$-equivariant categories (discussed already in Subsection~\ref{ssec:cat-twist}) and monoidal functors, we of course require $\zeta^g$ to be natural isomorphisms of monoidal functors.

\smallskip

Assume next we are given a monoidal category $\CC$ and a weak action $\alpha\colon \Gamma \curvearrowright \CC$.

\begin{definition}
By a \emph{right $\Gamma$-$\CC$-module category}, or a right $\Gamma$-equivariant $\CC$-module category, we mean a right $\CC$-module category $\DD$ equipped with a weak action $\beta\colon \Gamma \curvearrowright \DD$ and a family of natural isomorphisms $\beta_2^g\colon \beta^g(X)\times\alpha^g(U) \to \beta^g(X\times U)$ compatible with $\mu$, $\eta_\alpha$ and $\eta_\beta$ in the sense that the following two diagrams commute:
$$
\xymatrix@C=3em{
(\beta^g(X)\times\alpha^g(U))\times\alpha^g(V)\ar[r]^{\ \ \ \ \beta^g_2\times\iota} \ar[d]_{\mu^{\alpha^g(U),\alpha^g(V)}_{\beta^g(X)}} & \beta^g(X\times U)\times\alpha^g(V) \ar[r]^{\beta^g_2} & \beta^g((X\times U) \times V) \ar[d]^{\beta^g(\mu^{U,V}_X)}\\
\beta^g(X)\times(\alpha^g(U)\otimes\alpha^g(V)) \ar[r]_{\ \ \ \ \iota\times\alpha^g_2} & \beta^g(X)\times\alpha^g(U\otimes V) \ar[r]_{\beta^g_2} & \beta^g (X\times(U \otimes V))}
$$
$$
\xymatrix@C=0em{
\beta^g(\beta^h(X)\times\alpha^h(U)) \ar[d]_{\beta^g(\beta_2^h)} & & \beta^g\beta^h(X)\times \alpha^g\alpha^h(U) \ar[ll]_{\beta_2^g} \ar[d]^{\eta_\beta^{g,h}\times \eta_\alpha^{g,h}} \\
\beta^g\beta^h(X\times U)  \ar[dr]_{\eta_\beta^{g,h}} & & \beta^{g h}(X)\times\alpha^{g h}(U)\ar[dl]^{\beta_2^{gh}}\\
& \beta^{gh}(X\times U). \\
}
$$
\end{definition}

\begin{remark}
Equivariant module categories have been defined in~\cite{MR2735754}. It is not difficult to check that the above definition is equivalent to the one in~\cite{MR2735754}.
\end{remark}

\begin{example}\label{exmpl:when-D-is-DB}
Let $A$ be a Hopf algebra with an action of $\Gamma$ by Hopf algebra automorphisms $(\alpha_g)_{g\in\Gamma}$. As the category $\CC$ we take the category of left corepresentations (finite dimensional left comodules) $\Corep(A)$. Given a left corepresentation $U=(E_U,\delta^U\colon E_U \to A \otimes E_U)$, the monoidal functor $\alpha^g$ is defined by $E_{\alpha^g (U)} = E_U$ and $\delta^{\alpha^g (U)} = (\alpha^g\otimes\iota) \delta^U$.  Next, let $(B, \delta^B\colon B \to A \otimes B,\beta)$  be a $\Gamma$-$A$-comodule algebra. Then the category $\tilde\DD_B$ of right $B$-modules $M$ with a compatible left $A$-comodule structure $\delta^M$ becomes a right $\Comod(A)$-module category. Concretely, for $M \in \tilde\DD_B$, we take (following the convention of \cite{MR3291643}) $M \times U$ to be $E_U \otimes M$ with the left $A$-coaction given by $v \otimes x \mapsto S(v_{(-1)}) x_{(-1)} \otimes v_{(0)} \otimes x_{(0)}$ and the right $B$-action $(v \otimes x).b = v \otimes x b$.  The induced action of $\Gamma$ on $\tilde\DD_B$ is as follows: $\beta^g(M)$ has the same underlying space as $M$, while the  $A$-coaction becomes $(\alpha_g \otimes \iota) \delta^M$ and the right $B$-module structure is twisted by $\beta_{g^{-1}}$, so that $v.x$ in $\beta^g(M)$ is the same as $v.\beta_{g^{-1}}(x)$ in~$M$. We thus get a $\Gamma$-$\Corep(A)$-module category $\tilde\DD_B$. Note that the action of $\Gamma$ on $\tilde\DD_B$ is strict, and if the tensor product of vector spaces is strict, then the $\Corep(A)$-module structure is also strict. Note also that the trivial case $B = K$ gives (essentially) $\Corep(A)$ as a $\Gamma$-$\Corep(A)$-module category in a natural way.
\end{example}

The following has already been observed by Galindo~\cite{MR2763944}.

\begin{proposition}[\cite{MR2763944}*{Proposition 5.12}]\label{prop:eqv-action-is-crossed-product-action}
Any $\CC\rtimes\Gamma$-module category can be considered as a $\Gamma$-$\CC$-module category. Conversely, the structure of a $\Gamma$-$\CC$-module category extends in an essentially canonical way to that of a $\CC\rtimes\Gamma$-module category.
\end{proposition}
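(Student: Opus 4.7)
My plan is to treat both directions via the universal property of the semidirect product $\CC\rtimes\Gamma$. Concretely, a right $\CC\rtimes\Gamma$-module structure on $\DD$ is a monoidal functor $(\CC\rtimes\Gamma)^{\otimes\mathrm{op}}\to\underline{\End}(\DD)$, and the result amounts to the assertion that such functors are in bijection, naturally in $\DD$, with pairs consisting of a monoidal functor $\CC^{\otimes\mathrm{op}}\to\underline{\End}(\DD)$ and a weak action $\Gamma\to\underline{\Aut}(\DD)$, together with the intertwining data $\beta_2^g$ subject to the two compatibility diagrams stated in the definition.

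For the forward direction I would restrict the $\CC\rtimes\Gamma$-action along the canonical embeddings $\CC\hookrightarrow\CC\rtimes\Gamma$, $U\mapsto U\boxtimes e$, and $\underline\Gamma\hookrightarrow\CC\rtimes\Gamma$, $g\mapsto\un\boxtimes g$. Setting $X\times U:=X\times(U\boxtimes e)$ and $\beta^g(X):=X\times(\un\boxtimes g^{-1})$ immediately produces a right $\CC$-module category and a weak action of $\Gamma$ on $\DD$. The isomorphisms $\beta_2^g$ are then read off from the canonical identifications $(\un\boxtimes g^{-1})\otimes(\alpha^g(U)\boxtimes e)\cong U\boxtimes g^{-1}\cong(U\boxtimes e)\otimes(\un\boxtimes g^{-1})$ inside $\CC\rtimes\Gamma$, transported through the module associator $\mu$. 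Both compatibility diagrams for $\beta_2^g$ then reduce to instances of the pentagon for $\mu$ applied to suitable triples mixing factors of the form $U\boxtimes e$ and $\un\boxtimes g$, combined with the coherence of $\eta_\alpha$ inside $\CC\rtimes\Gamma$.

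For the reverse direction, given $(\DD,\times,\beta,\beta_2^g)$ I would set $X\star(U\boxtimes g):=\beta^{g^{-1}}(X\times U)$, so that the two formal decompositions $U\boxtimes g\cong(U\boxtimes e)\otimes(\un\boxtimes g)$ and $U\boxtimes g\cong(\un\boxtimes g)\otimes(\alpha^{g^{-1}}(U)\boxtimes e)$ in $\CC\rtimes\Gamma$ yield compatible answers, related precisely by $\beta_2^{g^{-1}}$. The structural isomorphism $(X\star(U\boxtimes g))\star(V\boxtimes h)\to X\star\bigl((U\otimes\alpha^g(V))\boxtimes gh\bigr)$ is then assembled from four pieces: the $\CC$-module associator $\mu$ merges the two $\CC$-factors, $\eta_\beta$ merges the two $\beta$-powers into $\beta^{(gh)^{-1}}$, $\eta_\alpha$ rewrites $\alpha^{h^{-1}}(V)$ in terms of $\alpha^g(V)$, and $\beta_2$ mediates between $\beta^{h^{-1}}$ applied inside and outside of the factor $\times V$. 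The hard part will be the pentagon coherence for $\star$: unpacked, it becomes a large diagram whose commutativity reduces to the combined use of the pentagon for $\mu$, the two cocycle identities for $\eta_\alpha$ and $\eta_\beta$, and the two compatibility squares that define a $\Gamma$-$\CC$-module category. I expect this diagram to partition into several commuting subregions, each governed by exactly one of the above ingredients, so the verification, while lengthy, is a formal diagram chase with no genuinely new input.

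To finish, I would check that these two constructions are mutually quasi-inverse up to canonical natural isomorphism, which yields the essentially canonical character of the extension asserted in the proposition. Starting from a $\CC\rtimes\Gamma$-module, restricting and then reassembling recovers the original action via the canonical iso $(U\boxtimes e)\otimes(\un\boxtimes g)\cong U\boxtimes g$ transported by $\mu$; conversely, starting from a $\Gamma$-$\CC$-module, restricting the reassembled $\star$-action along $U\mapsto U\boxtimes e$ and $g\mapsto\un\boxtimes g$ tautologically recovers $(\times,\beta)$, while the extracted $\beta_2^g$ agrees with the original one by the very construction of $\star$.
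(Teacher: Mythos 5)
Your proposal is correct and follows essentially the same route as the paper's own (sketched) argument: the forward direction restricts along $U\mapsto U\boxtimes e$ and $g\mapsto\un\boxtimes g^{-1}$ with $\beta^g_2$ obtained from the identification $(\un\boxtimes g^{-1})\otimes(\alpha^g(U)\boxtimes e)\cong U\boxtimes g^{-1}\cong (U\boxtimes e)\otimes(\un\boxtimes g^{-1})$ transported through $\mu$, and the converse sets $X\times(U\boxtimes g)=\beta^{g^{-1}}(X\times U)$ with the coherence checked by a (tedious but formal) diagram chase. Your write-up is in fact somewhat more explicit than the paper's about how the associativity data for the reassembled action decomposes into $\mu$, $\eta_\alpha$, $\eta_\beta$ and $\beta_2$, but the underlying construction is identical.
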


\begin{proof}
Let us sketch an argument.
Suppose we have an action of $\CC \rtimes \Gamma$ on $\DD$. Then we put $\beta^g(X) = X \times (\un_\CC \boxtimes g^{-1})$. The natural isomorphisms $\eta^{g,h}_\beta$ and $\beta^g_2$ are defined using the structure morphisms of $\DD$. Explicitly, $\eta_\beta^{g,h}=\mu^{\un\boxtimes h^{-1},\un\boxtimes g^{-1}}$ and $\beta_2^g$ is given  by the compositions
\begin{multline*}
(X \times g^{-1}) \times \alpha^g(U) \to X \times (g^{-1} \boxtimes \alpha^g(U))=X\times(\alpha^{g^{-1}}\alpha^g(U)\boxtimes g^{-1})\\
\to X\times(U\boxtimes g^{-1}) \to (X\times U )\times g^{-1},
\end{multline*}
where we have abbreviated $U \boxtimes e$ and $\un_\CC \boxtimes g$ as $U$ and $g$ respectively, and the middle equality follows from the way monoidal product in $\CC \rtimes \Gamma$ is defined.

Conversely, given a $\Gamma$-$\CC$-module category $\DD$, we can define $$X \times (U\boxtimes g) = \beta^{g^{-1}}(X \times U),$$ and then extend this definition to direct sums in an essentially unique way. The structure morphisms are defined using those given by the  $\Gamma$-$\CC$-module structure. The axioms are verified by a straightforward but tedious computation.
\end{proof}

From this we obtain the notion of a $\Gamma$-equivariant $\CC$-module functor. It is also possible to formulate it in terms of $\Gamma$-actions and $\CC$-module structures from outset, and then the above argument provides an equivalence of two $2$-categories, that of the $(\CC \rtimes \Gamma)$-module categories (as $0$-cells, module functors as $1$-cells, and natural transformations of module functors as $2$-cells) on the one hand, and that of $\Gamma$-$\CC$-module categories on the other, but we are not going to pursue this.

\smallskip

Let $\CC$ be a $\Gamma$-graded monoidal category equipped with an invariant weak action $\alpha\colon \Gamma \curvearrowright \CC$. Assume $\DD$ is a right $\Gamma$-$\CC$-module category, with the action of $\Gamma$ denoted by $\beta$. Then we can consider~$\DD$ as a $\CC^{t,\alpha}$-module category, since by definition $\CC^{t,\alpha}$ is a subcategory of $\CC\rtimes\Gamma$. We denote the $\CC^{t,\alpha}$-module category $\DD$ by $\DD^{t,\alpha,\beta}$, or by $\DD^{t,\beta}$. As we will see below, this almost tautological definition is the right analogue of the construction in Section~\ref{sec:twist-comod-algebr}.

\subsection{Strictification}
\label{sec:strictification}

Let us show that any $\Gamma$-$\CC$-module structure can be replaced by a strict one in the strongest sense. A part, if not all, of the arguments below should be known to the experts. In fact, as we were finalizing this paper, Galindo's work \cite{arXiv:1604.01679} appeared, which discusses strictification of equivariant monoidal categories and relies on similar ideas.

\smallskip

Given a monoidal category $\CC$, by Mac Lane's theorem there exists a strict monoidal category~$\CC'$ equivalent to $\CC$. Next assume that $\DD$ is a right $\CC$-module category. Since this simply means that we are given a monoidal functor $\CC^{\otimes\mathrm{op}}\to\underline{\End}(\DD)$, $\DD$ can also be considered as a $\CC'$-module category. The following is a folklore result, see, e.g.,~\cite{MR1824165}*{Proposition~5} for a weaker early version.

\begin{proposition}
\label{prop:strict-module}
There exists a strict right $\CC'$-module category $\DD'$ equivalent to $\DD$.
\end{proposition}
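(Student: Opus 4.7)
The plan is to imitate Mac Lane's strictification for monoidal categories, adapted to the module setting. First I would use the monoidal equivalence $\CC\simeq\CC'$ to regard $\DD$ as a (non-strict) right $\CC'$-module category, composing the given structure functor $\CC^{\otimes\mathrm{op}}\to\underline{\End}(\DD)$ with the monoidal equivalence; denote the resulting action still by $\times$ and its associator by $\mu$.

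Next I would define $\DD'$ by taking as objects formal pairs $(X,U)$ with $X\in\DD$ and $U\in\CC'$, with morphism spaces
\[
\Hom_{\DD'}((X,U),(Y,V)):=\Hom_{\DD}(X\times U,Y\times V),
\]
and composition inherited from $\DD$. The right $\CC'$-action on $\DD'$ is given on objects by the concatenation
\[
(X,U)\times W:=(X,U\otimes W),
\]
and on a morphism $f\in\Hom_{\DD}(X\times U,Y\times V)$ by the composite $\mu^{V,W}_Y\circ(f\times\iota_W)\circ(\mu^{U,W}_X)^{-1}$ in $\Hom_{\DD}(X\times(U\otimes W),Y\times(V\otimes W))$. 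Because $\CC'$ is strict, we have $(U\otimes W)\otimes Z=U\otimes(W\otimes Z)$ and $U\otimes\un=U$ on the nose, so the module associators and unitors of $\DD'$ are literally identities; the structural axioms of a $\CC'$-module category reduce, via the above definition of the action on morphisms, to the pentagon and triangle for $\mu$ in $\DD$.

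I would then exhibit the equivalence by the evident functor $E\colon\DD'\to\DD$ sending $(X,U)$ to $X\times U$ and acting as the identity on morphisms. By construction $E$ is fully faithful, and it is essentially surjective because $X\cong E(X,\un)$ via the unit isomorphism $\nu_X\colon X\times\un\to X$ of the $\CC'$-module structure on $\DD$. The isomorphisms $\mu^{U,W}_X\colon E((X,U)\times W)\to E(X,U)\times W$ equip $E$ with the structure of a $\CC'$-module functor, and their coherence follows again from the pentagon/triangle for $(\DD,\times,\mu)$.

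The step I expect to require the most bookkeeping is checking that the composition in $\DD'$ is genuinely associative and unital when expressed through the conjugation by $\mu$, and that $E$ respects this together with the module axioms; but all of this is a direct diagram chase using naturality of $\mu$ and the coherence of $\DD$ as a $\CC$-module category, together with the strictness of the tensor product in $\CC'$, so no essentially new ingredient is needed.
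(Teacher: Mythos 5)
Your proposal is correct and is essentially identical to the paper's proof: the same category of pairs $(X,U)$ with $\Hom_{\DD'}((X,U),(Y,V))=\DD(X\times U,Y\times V)$, the same concatenation action $(X,U)\times W=(X,U\otimes W)$, and the same evaluation functor $(X,U)\mapsto X\times U$ equipped with $\mu$ as the module-functor structure (the paper takes $\theta=\mu^{U,V}_X$ in the opposite direction from yours, which is immaterial). The paper simply leaves the coherence checks you describe to the reader.
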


\bp
Let $\DD'$ be the category of pairs $(X,U)$, with $X\in\DD$ and $U\in\CC'$. The morphisms are defined by
${\DD'}((X,U),(Y,V))={\DD}(X\times U,Y\times V)$, and the $\CC'$-module structure by $(X,U)\times V=(X,U\otimes V)$. The equivalence $(F,\theta)\colon\DD'\to\DD$ is given by $F(X,U)=X\times U$ and $\theta=\mu^{U,V}_X\colon F(X,U)\times V\to F(X,U\otimes V)$.
\ep

Suppose now that we have a weak action $(\alpha,\eta)$ of $\Gamma$ on a category $\DD$. We may assume that $\alpha^e=\Id_\DD$. Define a new category $\tilde{\DD}$ as follows. The objects are pairs $(X_*, \xi)$, where:
\begin{itemize}
\item $X_*$ is a formal direct sum $\bigoplus_g X_g \boxtimes g$, with $X_g \in \CC$,
\item $\xi$ is a collection of isomorphisms $\xi^h_g\colon \alpha^h (X_g) \to X_{h g}$ making the diagram
\begin{equation}\label{eq:mor-part-in-tDD}
\xymatrix@C=4em{
\alpha^k \alpha^h (X_g) \ar[r]^{\alpha^k(\xi^h_g)} \ar[d]_{\eta^{k,h}} & \alpha^k (X_{h g}) \ar[d]^{\xi^k_{hg}}\\
\alpha^{k h} (X_g) \ar[r]_{\xi^{k h}_g}  & X_{k h g}
}
\end{equation}
commutative.
\end{itemize}
A morphism from $(X, \xi)$ to $(X', \xi')$ is given by a family of morphisms $X_g \to X'_g$ compatible with $\xi$ and $\xi'$ (of course, more succinctly, we could just take the morphisms $X_e \to X'_e$).

The group $\Gamma$ acts strictly on $\tDD$ as follows. For each $h \in \Gamma$, we define an endofunctor $\tilde{\alpha}^h$ of $\tDD$ by sending $\bigoplus_g X_g \boxtimes g$ to $\bigoplus_g X_{g h} \boxtimes g$ and $\xi=(\xi^k_g)_{k,g}$ to $(\xi^k_{gh})_{k,g}$. Since we are only translating the variable on $\Gamma$ on the right, we have the equality $\tilde{\alpha}^h \tilde{\alpha}^k = \tilde{\alpha}^{h k}$.

\begin{proposition}\label{prop:eqv-eqv-D-tilde-D}
The categories $\DD$ and $\tDD$ are $\Gamma$-equivariantly equivalent via the functors
\begin{align*}
F &\colon \DD \to \tDD, \quad X \mapsto \left(\bigoplus_g \alpha^g (X)\boxtimes g, \: \xi^h_g = \eta^{h,g}\colon \alpha^h \alpha^g (X) \to \alpha^{h g} (X)\right)\\
F' &\colon \tDD \to \DD, \quad (\bigoplus_g X_g \boxtimes g, \xi) \to X_e.
\end{align*}
The construction $\DD \rightsquigarrow \tDD$ is natural in the sense that any $\Gamma$-equivariant functor $(G, \theta)$ from $\DD$ to $\DD'$ induces a canonical strict $\Gamma$-equivariant functor $\tDD \to \tDD'$.
\end{proposition}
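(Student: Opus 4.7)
The strategy is to unwind all definitions and repeatedly apply the cocycle identity
\[
\eta^{g,hk}_X \circ \alpha^g(\eta^{h,k}_X) = \eta^{gh,k}_X \circ \eta^{g,h}_{\alpha^k(X)},
\]
which by hypothesis is what makes $(\alpha,\eta)$ a weak action on $\DD$.

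First I would check that $F$ lands in $\tDD$: the family $\xi^h_g := \eta^{h,g}_X \colon \alpha^h\alpha^g(X) \to \alpha^{hg}(X)$ satisfies \eqref{eq:mor-part-in-tDD} literally by the cocycle identity applied to the triple $(k,h,g)$. The equivariance structure $\zeta^g_X \colon F\alpha^g(X) \to \tilde\alpha^g F(X)$ is defined componentwise on the $h$-summand by $\eta^{h,g}_X$; checking that it is a morphism in $\tDD$, and that the equivariance diagram \eqref{eq:eqv-ftr} holds (with $\eta_{\tilde\alpha}$ being the identity since $\tilde\alpha$ is strict), both reduce to further applications of the cocycle.

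Next I would treat $F'$. Its equivariance structure is forced: $\zeta'^g_{(X_*,\xi)} := (\xi^g_e)^{-1}\colon X_g \to \alpha^g(X_e)$, with naturality in $(X_*,\xi)$ coming from the defining compatibility between morphisms in $\tDD$ and the $\xi$-structures, and the equivariance axiom \eqref{eq:eqv-ftr} following again from \eqref{eq:mor-part-in-tDD}. Quasi-invertibility is almost tautological: $F'F = \Id_\DD$ on the nose thanks to $\alpha^e=\Id_\DD$, while the isomorphism $FF' \xrightarrow{\sim} \Id_{\tDD}$ is given at $(X_*,\xi)$ by the family $(\xi^g_e\colon \alpha^g(X_e)\to X_g)_g$, whose compatibility with the $\xi$-structures on both sides is one more instance of \eqref{eq:mor-part-in-tDD}.

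For the naturality claim, given a $\Gamma$-equivariant functor $(G,\chi)\colon \DD\to\DD'$ with $\chi^g\colon G\alpha^g \to \alpha'^g G$, I would set
\[
\tilde G((X_*,\xi)) := \Bigl(\bigoplus_g G(X_g)\boxtimes g,\ \tilde\xi^h_g := G(\xi^h_g)\circ(\chi^h_{X_g})^{-1}\Bigr),
\]
with $\tilde G$ acting componentwise on morphisms. Strict $\Gamma$-equivariance with respect to the two translation-by-index actions is immediate. The main obstacle is verifying that $\tilde\xi$ satisfies \eqref{eq:mor-part-in-tDD}: expanding $\tilde\xi^k_{hg}\circ\alpha'^k(\tilde\xi^h_g)$, commuting $\alpha'^k G(\xi^h_g)$ past $(\chi^k)^{-1}$ via naturality of $\chi^k$, and invoking the $\xi$-cocycle, one reduces the required identity precisely to the equivariance diagram \eqref{eq:eqv-ftr} for $(G,\chi)$ evaluated at $X_g$. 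Functoriality of $\tilde G$ and its compatibility with composition of equivariant functors are then routine.
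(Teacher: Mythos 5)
Your proposal is correct and follows essentially the same route as the paper: the same formulas for the unit $F'F=\Id_\DD$, the counit $(\xi^g_e)_g\colon FF'\to\Id_{\tDD}$, the equivariance structure $\zeta$ built from $\eta$, and the induced functor $\tilde G$ with $\tilde\xi^h_g = G(\xi^h_g)\circ(\chi^h_{X_g})^{-1}$ (the paper writes this composite with $\theta^h$ in the opposite direction, so your explicit inverse is the more careful rendering). You supply a few verifications the paper leaves implicit, but there is no difference in substance.
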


\begin{proof}
Since $\alpha^e =\Id_\DD$, we have $F' F =\Id_\DD$. On the other hand, we can define a natural isomorphism $F F' \to \Id_{\tDD}$ by combining $\xi^g_e \colon \alpha^g (X_e) =(F F'(X_*, \xi))_g  \to X_g$ for $g \in \Gamma$. Thus, these functors are equivalences of categories.

When $h \in \Gamma$, the required natural isomorphism $\zeta^h \colon F \alpha^h \to \tilde{\alpha}^h F$ is given by collecting $\eta^{g,h} \boxtimes \iota_g \colon \alpha^g \alpha^h (X) \otimes g \to \alpha^{g h} (X) \boxtimes g$ for $g \in \Gamma$. Commutativity of the diagram~\eqref{eq:eqv-ftr} follows from $\eta^{g, h k} \alpha^g(\eta^{h,k}) = \eta^{g h, k} \eta^{g h}$.

\smallskip

Consider now a $\Gamma$-equivariant functor $(G, \theta)$ from $\DD\to\DD'$.
Let $(X_*,\xi) = (\bigoplus_g X_g \boxtimes g, (\xi^h_g)_{h,g})$ be an object in $\tDD$. Then we define an object $\tilde{G}(X_*,\xi)$ of $\tDD'$ to be the pair consisting of $Y_* = \bigoplus_g G(X_g) \boxtimes g$ and the isomorphisms $$\nu^h_g = G(\xi^h_g) \theta^h_{X_g}\colon \beta^h G(X_g) \to G(X_{hg}).$$ That the family $(\nu^h_g)_{h,g}$ satisfies the commutativity of~\eqref{eq:mor-part-in-tDD} follows from the corresponding condition for $\xi$ and the commutativity of~\eqref{eq:eqv-ftr}.
\end{proof}

\begin{remark}
The above argument is inspired by the work of Tambara~\cite{MR1815142}. For finite groups the category~$\tDD$ is $(\DD \boxtimes \mathrm{Vect}^\Gamma_{f,K})^\Gamma$ (see \cite{MR1815142} or the next subsection for the meaning of this notation; here $\mathrm{Vect}^\Gamma_{f,K}$ is the category of finite dimensional $\Gamma$-graded vector spaces over $K$), and the equivalence $\DD \simeq (\DD \boxtimes \mathrm{Vect}^\Gamma_{f,K})^\Gamma$ appears in the proof of~Theorem~4.1 in~\textit{op.~cit}.

The conclusion of the above proposition might look counter-intuitive at first, since on $\DD = \mathrm{Vect}_{f,K}$ (considered as a $K$-linear category, so without the monoidal structure), the weak actions of $\Gamma$ are parametrized by $H^2(\Gamma; K^\times)$ up to equivalence (see, e.g.,~\cite{MR3242743}*{Exercise~2.7.3}). However, strictness is not preserved under the natural correspondence of weak actions on equivalent categories, so we are not claiming that all weak actions on $\DD$ can be simultaneously strictified on some equivalent category.
\end{remark}

Let us next consider a monoidal category $\CC$ and a weak action $(\alpha,\eta)$ of $\Gamma$ on $\CC$. Then the category $\tCC$ defined as above admits the structure of a monoidal category. Namely, the tensor product of $(X_*, \xi)$ and $(Z_*, \zeta)$ is defined to be the pair consisting of the object $\bigoplus_g (X_g \otimes Z_g) \boxtimes g$ and the family of morphisms
$$
(\xi\otimes\zeta)^h_g = (\xi^h_g \otimes \zeta^h_g)(\alpha^h_2)^{-1}\colon \alpha^h(X_g \otimes Z_g) \to X_{h g} \otimes Z_{h g}.
$$
If $\CC$ is strict, then the tensor product in $\tCC$ is also strict. If we further assume that conditions~\eqref{eq:weakact2}--\eqref{eq:weakact3} are satisfied, then the tensor unit $(\bigoplus_g \un\boxtimes g, (\xi^h_g=\iota)_{h,g})$ in $\tilde\CC$ is also strict, so that $\tCC$ becomes a strict monoidal category. Then $\tilde\alpha^g$ becomes a strict tensor functor, and we get the following.

\begin{lemma}\label{lem:mon-eqv-C-tilde-C}
Given a weak action of $\Gamma$ on a strict monoidal category $\CC$ satisfying conditions~\eqref{eq:weakact1}--\eqref{eq:weakact3}, the functors $F\colon \CC \to \tCC$ and $F'\colon \tCC \to \CC$ of Proposition~\ref{prop:eqv-eqv-D-tilde-D} can be enriched to $\Gamma$-equivariant monoidal equivalences.
\end{lemma}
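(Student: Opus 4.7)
The plan is to unwind the definitions and check that the ``obvious'' candidates for the tensorators on $F$ and $F'$ do the job, with the main content being that the coherence data already built into the weak action $(\alpha,\eta)$ — namely the monoidality of each $\alpha^g_2$ and the fact that each $\eta^{h,g}$ is a \emph{monoidal} natural isomorphism — translates verbatim into the required axioms.

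For $F$ I would define $F_2(X,Y)\colon F(X)\otimes_{\tCC}F(Y)\to F(X\otimes Y)$ componentwise by
\[
F_2(X,Y)_g=\alpha^g_2\colon\alpha^g(X)\otimes\alpha^g(Y)\to\alpha^g(X\otimes Y),
\]
and set $F_0=\iota$. The strictness assumptions $\alpha^g(\un)=\un$ and $\eta^{h,g}_\un=\iota$ ensure $F(\un)=\bigoplus_g\un\boxtimes g$ with trivial $\xi$, which is exactly the unit of $\tCC$, so $F_0$ makes sense. The first real check is that $F_2(X,Y)$ is a morphism in $\tCC$, i.e., commutes with the $\xi$'s of source and target. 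Unpacking both sides, this amounts to verifying
\[
\eta^{h,g}_{X\otimes Y}\circ\alpha^h(\alpha^g_2)\circ\alpha^h_2=\alpha^{hg}_2\circ(\eta^{h,g}_X\otimes\eta^{h,g}_Y),
\]
which is precisely the assertion that $\eta^{h,g}$ is a natural monoidal isomorphism from $\alpha^h\alpha^g$ to $\alpha^{hg}$.

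Next I would verify the monoidal coherence of $F$. Since both $\CC$ and $\tCC$ are strict, associativity of $F_2$ reduces to $F_2(X\otimes Y,Z)\circ(F_2(X,Y)\otimes\iota)=F_2(X,Y\otimes Z)\circ(\iota\otimes F_2(Y,Z))$, which holds componentwise by the hexagon for $\alpha^g_2$ (monoidality of $\alpha^g$). The unit axioms follow from $\alpha^g_2(\un,X)=\alpha^g_2(X,\un)=\iota$. For $F'$, since $F'((X_*,\xi)\otimes(Z_*,\zeta))=(X\otimes Z)_e=X_e\otimes Z_e=F'(X_*,\xi)\otimes F'(Z_*,\zeta)$ on the nose (the tensor product in $\tCC$ is defined component-by-component and $\alpha^e_2=\iota$), we can take $F'_2=\iota$ and $F'_0=\iota$, so monoidality of $F'$ is automatic. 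The identity $F'F=\Id_\CC$ now holds as an equality of monoidal functors, and for $FF'\simeq\Id_{\tCC}$ I would check that the natural isomorphism built from the $\xi^g_e$'s in Proposition~\ref{prop:eqv-eqv-D-tilde-D} is monoidal; this reduces, on the $g$-component, to the compatibility of $\xi$ with the tensor product in $\tCC$ recorded in its very definition.

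Finally, for $\Gamma$-equivariance as \emph{monoidal} equivalences, one must upgrade the natural isomorphisms $\zeta^h\colon F\alpha^h\to\tilde\alpha^hF$ (with components $\eta^{g,h}_X\boxtimes\iota_g$) and the analogous one for $F'$ to monoidal natural isomorphisms. Because $\tilde\alpha^h$ is by construction a strict tensor functor on $\tCC$, the required compatibility of $\zeta^h$ with $F_2$ and $\alpha^h_2$ again collapses to the monoidal-naturality of $\eta^{g,h}$. The main obstacle is purely notational: keeping track on which $g$-component one is working and which of the many coherence isomorphisms is in play. Once organized in diagrams indexed by $g\in\Gamma$, each diagram either is a diagram that already appears in the definition of a weak action on a monoidal category, or becomes trivial thanks to conditions~\eqref{eq:weakact1}--\eqref{eq:weakact3}.
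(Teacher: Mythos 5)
Your proposal is correct and follows essentially the same route as the paper: define $F_2$ componentwise by the $\alpha^g_2$, observe that it is a morphism in $\tCC$ precisely because each $\eta^{h,g}$ is a monoidal natural isomorphism, note that $F'$ is already strict monoidal, and deduce monoidality of the equivariance isomorphisms $\zeta^h$ from the same fact. You simply spell out the coherence checks that the paper leaves implicit.
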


\begin{proof}
The functor $F'$ is already a strict tensor functor, so we only have to define $F_2$. Given~$X$ and~$Z$ in $\CC$, we define the natural transformation $F_2\colon F (X) \otimes F (Z) \to F(X \otimes Z)$ by the collection of morphisms $\alpha^g_2 \colon \alpha^g (X) \otimes \alpha^g (Z) \to \alpha^g(X \otimes Z)$. The fact that this gives morphisms in $\tCC$ follows from monoidality of $\eta^{g,h}$.  For the same reason the natural isomorphisms $\zeta^h$ from the proof of  Proposition~\ref{prop:eqv-eqv-D-tilde-D} are monoidal.
\end{proof}

Let us summarize the above considerations. We start with a $\Gamma$-$\CC$-module category $\DD$ and perform the following steps:
\begin{itemize}
\item take a strict monoidal category $\CC'$ equivalent to $\CC$; since $\underline{\Aut}^\otimes(\CC')$ is monoidally equivalent to $\underline{\Aut}^\otimes(\CC)$, we have a weak action of $\Gamma$ on $\CC'$ such that the monoidal equivalence between $\CC$ and $\CC'$ becomes $\Gamma$-equivariant;
\item next we modify the action on $\CC'$ to get an isomorphic action satisfying conditions~\eqref{eq:weakact1}--\eqref{eq:weakact3};
\item we then apply to $\CC'$ the procedure described before Proposition~\ref{prop:eqv-eqv-D-tilde-D} and Lemma~\ref{lem:mon-eqv-C-tilde-C} to get a strict monoidal category $\tilde\CC$ equipped with a strict action of $\Gamma$ by strict tensor functors such that $\tCC\simeq\CC'$ as $\Gamma$-equivariant monoidal categories;
\item as $\tCC\rtimes\Gamma$ is monoidally equivalent to $\CC\rtimes\Gamma$, we can consider $\DD$ as a $\tCC\rtimes\Gamma$-module category; applying Proposition~\ref{prop:strict-module} we finally get an equivalent strict $\tCC\rtimes\Gamma$-category $\DD'$.
\end{itemize}

Therefore in developing a general theory of $\Gamma$-equivariant module categories there is no loss of generality in assuming that for every such category everything that can be strict is strict.

Note, however, that we still need nonstrict functors between such categories. But if we consider only singly generated categories, as we do below, even this is unnecessary.

\subsection{Duality}

We want to show next that the constructions of the algebras $R^{t,\beta}$ and module categories $\DD^{t,\beta}$ given in Sections~\ref{ssec:twist-comod} and~\ref{ssec:twist-mod-cat}, respectively, are related by a Tannaka--Krein type duality.

The Tannaka--Krein duality principle states that quantum groups can be encoded by monoidal categories and fiber functors into the category of vector spaces. Pursuing this correspondence, the $A$-comodule algebras are encoded by~$\Corep(A)$-module categories, see~\citelist{\cite{MR1976459}\cite{MR3121622}\cite{MR3426224}\cite{MR3291643}} for precise statements in various contexts. Since for infinite dimensional Hopf algebras complete results of this type seem to be available only for compact quantum groups, we will work now in this setting.

Specifically, we need the following Tannaka--Krein type duality result for compact quantum groups $G$ and unital $G$-C$^*$-algebras.

\begin{theorem}[\citelist{\cite{MR3121622}*{Theorem~6.4}\cite{MR3426224}*{Theorem~3.3}}]
\label{tactions}
Let $G$ be a reduced compact quantum group.  Then the following two categories are equivalent:
\begin{itemize}
\item the category of unital $G$-C$^*$-algebras $B$ with unital $G$-equivariant $*$-homomorphisms as morphisms;
\item the category of pairs $(\DD,X)$, where $\DD$ is a right $(\Rep G)$-module C$^*$-category closed under subobjects and $X$ is a generating object in $\DD$, with equivalence classes of unitary $(\Rep G)$-module functors respecting the prescribed generating objects as morphisms.
\end{itemize}
\end{theorem}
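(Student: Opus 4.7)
The result is cited from \cite{MR3121622}*{Theorem~6.4} and \cite{MR3426224}*{Theorem~3.3}, so any complete proof will follow those references. Here I sketch the approach I would take.

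The plan is to construct a pair of functors between the two categories and verify that they are mutually quasi-inverse. Going from $B$ to $(\DD,X)$: given a unital $G$-C$^*$-algebra $B$, I would let $\DD_B$ be the C$^*$-category of $G$-equivariant finitely generated right Hilbert $B$-modules, closed under subobjects. The right $(\Rep G)$-action would be $M \times U = H_U \otimes M$ with the diagonal coaction and the $B$-action on the second leg, paralleling the algebraic construction in Example~\ref{exmpl:when-D-is-DB}. The distinguished generator is $X = B$ viewed as a module over itself. A $G$-equivariant unital $*$-homomorphism $B \to B'$ is sent to the base-change module functor $M \mapsto M \otimes_B B'$.

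In the other direction, the reconstruction of $B$ from $(\DD, X)$ would be a Tannakian internal-End construction. As a $G$-comodule, I would set
\begin{equation*}
\mathfrak{B}(\DD, X) = \bigoplus_{U \in \Irr G} \Hom_\DD(X, X \times U) \otimes_\C \bar{H}_U,
\end{equation*}
with multiplication obtained from composing natural transformations of module functors via the structure morphisms $\mu^{U,V}_X$, and with $*$-structure coming from the dagger on $\DD$ together with the duals in $\Rep G$. Completing in the universal C$^*$-norm should yield a C$^*$-algebra carrying a continuous $G$-action given by the $\Irr G$-grading. The assignment is made functorial by sending a module functor respecting generators to the induced $*$-homomorphism on the $\Hom$-spaces.

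The main verifications I would then carry out are: (i) that both assignments are well-defined on $1$-morphisms, with the appropriate identification of $2$-morphisms; (ii) that $B \simeq \mathfrak{B}(\DD_B, B)$ naturally, via the Peter--Weyl decomposition of the $G$-coaction on $B$; and (iii) that $(\DD_{\mathfrak{B}(\DD,X)}, \mathfrak{B}(\DD,X)) \simeq (\DD, X)$ as a pair consisting of a module C$^*$-category together with a generating object, using closure under subobjects and the generating property of $X$ to extend the tautological equivalence from the orbit $\{X \times U\}_U$ to all of $\DD$. The step I expect to be the main obstacle is the C$^*$-algebraic content of the reconstruction: associativity of the multiplication on $\mathfrak{B}(\DD, X)$, compatibility of $*$, and the existence and choice of a C$^*$-norm making the whole assignment natural in $(\DD, X)$. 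This is precisely the technical core of \cite{MR3121622} and \cite{MR3426224}, where closely related formulations are treated in detail.
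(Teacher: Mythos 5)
Your sketch matches the paper's treatment: the paper does not prove this theorem but cites it, and the correspondence it records afterwards is exactly the one you describe --- $\DD_B$ the category of $G$-equivariant finitely generated right Hilbert $B$-modules with distinguished object $B$, and in the reverse direction the regular algebra $\mathcal{A}_{\DD,X}=\bigoplus_{[U]\in\Irr G}\bar H_U\otimes\DD(X,X\times U)$ with product induced by $\mu^{U,V}$ and a suitable C$^*$-completion. The only caveat is a convention detail: the coaction on $M\times U$ used in the paper (Example~\ref{exmpl:when-D-is-DB}) is $v\otimes x\mapsto S(v_{(-1)})x_{(-1)}\otimes v_{(0)}\otimes x_{(0)}$ rather than a plain diagonal coaction, but this does not affect the structure of the argument, whose technical core is in any case delegated to the cited references, as you acknowledge.
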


Here, by a generating object~$X$ in a $\CC$-module category $\DD$ we mean that every object of $\DD$ is a subobject of $X\times U$ for some~$U$. Having generating objects $X \in \DD$ and $X' \in \DD'$ specified, we only consider the module functors $(F, \theta)$ such that $F(X) = X'$. The equivalence relation on such functors $(F, \theta), (F', \theta')\colon \DD \to \DD'$ is defined as the existence of a natural unitary transformation $\eta\colon F \to F'$ which satisfies $\eta_X = \iota$ and is compatible with module functor structures $\theta$ and $\theta'$, in the sense that the diagrams of the following form are commutative:
$$
\xymatrix{
F(Y) \times U \ar[r]^{\theta} \ar[d]_{\eta \times \iota} & F(Y \times U) \ar[d]^{\eta}\\
F'(Y) \times U \ar[r]_{\theta'} & F'(Y \times U).
}
$$

\smallskip

Briefly, the correspondence in the above theorem is defined as follows. Given a unital $G$-C$^*$-algebra $B$, as $\DD_B$ we take category of $G$-equivariant finitely generated right Hilbert $B$-modules (which is a full subcategory of the category $\tilde\DD_B$ considered in Example~\ref{exmpl:when-D-is-DB}). The distinguished object $X$ is $B$ itself.

In the opposite direction, given a pair $(\DD,X)$ as in the theorem, the corresponding $G$-C$^*$-algebra $A_{\DD,X}$ is defined as a suitable completion of the \emph{regular algebra}
$$
\mathcal{A}_{\DD,X} = \bigoplus_{[U] \in \Irr G} \bar{H}_U \otimes \DD(X, X \times U),
$$
where $\bar{H}_U$ is the conjugate Hilbert space of $H_U$, endowed with the product structure coming from the monoidal structure on $\Rep G$ and the $(\Rep G)$-module category structure on $\DD$. In more detail,  the product of $\bar\xi\otimes S\in \bar{H}_U \otimes \DD(X, X \times U)$ and $\bar\zeta\otimes T\in \bar{H}_V \otimes \DD(X, X \times V)$ is obtained by writing
$$
\overline{(\xi\otimes\zeta)}\otimes\mu^{U,V}(S\times\iota)T\in \bar{H}_{U\otimes V} \otimes \DD(X, X \times (U\otimes V))
$$ as an element of $\mathcal{A}_{\DD,X}$ using a decomposition of $U\otimes V$ into irreducibles.

\smallskip

Turning to equivariant module categories, recall the following notion.

\begin{definition}[\cite{MR1815142}]\label{dfn:gamma-fixed-obj}
Given a weak action $(\beta,\eta_\beta)$ of $\Gamma$ on a category $\DD$, a \emph{$\Gamma$-invariant}, or $\Gamma$-equivariant, object is a pair $(X,\zeta)$ consisting of an object $X\in\DD$ and a family $\zeta$ of isomorphisms $\zeta^g\colon \beta^g(X) \to X$ such that the diagrams
 $$
 \xymatrix@C=4em{
 \beta^g \beta^h (X) \ar[r]^{\beta^g(\zeta^h)} \ar[d]_{\eta_\beta^{g,h}} & \beta^g(X) \ar[d]^{\zeta^g}\\
 \beta^{g h}(X) \ar[r]_{\zeta^{g h}} & X
 }
 $$
 commute for $g, h \in \Gamma$. The category of $\Gamma$-invariant objects is denoted by $\DD^\Gamma$.
\end{definition}

In the C$^*$-setting we of course also assume that the isomorphisms $\zeta^g$ are unitary.

\begin{remark}
If $\DD$ is a $\Gamma$-$\CC$-module category, then any invariant object $(X,\zeta)$ defines a $\Gamma$-$\CC$-module functor $\CC\to\DD$ mapping $\un$ into $X$.
\end{remark}

Let us say that $\Gamma$ \emph{fixes} $X\in\DD$ if $(X,\zeta)\in\DD^\Gamma$ for some $\zeta$. We have the following result complementing Theorem~\ref{tactions}.

\begin{proposition}
Let $G$ be a reduced compact quantum group endowed with an action $\alpha$ of~$\Gamma$, $\DD$ a right $(\Rep G)$-module C$^*$-category closed under subobjects, and $X\in\DD$ be a generating object. Then the following conditions are equivalent:
\begin{enumerate}
\item there is an action of $\Gamma$ on $A_{\DD,X}$ turning this algebra into a $\Gamma$-equivariant $G$-C$^*$-algebra;
\item the $(\Rep G)$-module category structure on $\DD$ extends to that of a right $\Gamma$-$(\Rep G)$-module C$^*$-category such that $\Gamma$ fixes $X$.
\end{enumerate}
\end{proposition}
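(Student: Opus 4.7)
The plan is to enhance Theorem~\ref{tactions} by transporting the $\Gamma$-structure through the Tannaka--Krein correspondence in both directions, leaning on the strictification machinery of Section~\ref{sec:strictification} to tame the coherence data.

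For (i) $\Rightarrow$ (ii), I would identify $\DD$ with the category $\DD_B$ of $G$-equivariant finitely generated right Hilbert $B$-modules, where $B=A_{\DD,X}$ and $X$ corresponds to $B$ itself. Given a $\Gamma$-action $\beta$ on $B$ compatible with $\alpha$, I would follow Example~\ref{exmpl:when-D-is-DB} and set $\beta^g(M)$ to be $M$ with the $B$-action precomposed with $\beta_{g^{-1}}$ and the $G$-coaction postcomposed with $\alpha_g\otimes\iota$. The compatibility $\delta^B\beta_g=(\alpha_g\otimes\beta_g)\delta^B$ guarantees that $\beta^g(M)$ is again a $G$-equivariant Hilbert $B$-module, and the multiplicativity of $\beta$ on $B$ makes the induced weak action on $\DD_B$ strict. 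The coherence unitaries $\beta^g_2\colon\beta^g(M)\times\alpha^g(U)\to\beta^g(M\times U)$ can be taken as the identity on underlying spaces (the twist on both sides agrees), and the algebra automorphism $\beta_g\colon B\to B$ itself serves as a unitary $\zeta^g\colon\beta^g(B)\to B$ in $\DD_B$, exhibiting $X=B$ as $\Gamma$-fixed.

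For (ii) $\Rightarrow$ (i), I would first invoke the strictification of Section~\ref{sec:strictification} to reduce to the case in which $\Rep G$ is strict and the $\alpha^g$ are strict tensor functors with trivial $\eta_\alpha^{g,h}$, while the action on $\DD$ is strict as well; only the unitaries $\zeta^g$ and the module coherences $\beta^g_2$ remain nontrivial. For each $g\in\Gamma$ I would then define a linear endomorphism $\beta_g$ of the regular algebra
\begin{equation*}
\mathcal{A}_{\DD,X} = \bigoplus_{[U]\in\Irr G} \bar H_U \otimes \DD(X,X\times U)
\end{equation*}
sending the summand indexed by $[U]$ to the one indexed by $[\alpha^g(U)]$: on $\bar H_U$ it is the identity map of the underlying Hilbert space (since $\alpha^g(U)$ has the same carrier as $U$), and on $\DD(X,X\times U)$ it sends $S$ to the composition
\begin{equation*}
X \xrightarrow{(\zeta^g)^{-1}} \beta^g(X) \xrightarrow{\beta^g(S)} \beta^g(X\times U) \xrightarrow{(\beta^g_2)^{-1}} \beta^g(X)\times\alpha^g(U) \xrightarrow{\zeta^g\times\iota} X\times\alpha^g(U).
\end{equation*}
Then I would verify in order that $\beta_g$ is a $*$-algebra map, that $\beta_g\beta_h=\beta_{gh}$, that $\delta^B\beta_g=(\alpha_g\otimes\beta_g)\delta^B$ (this drops out from the fact that the $G$-coaction on $\mathcal{A}_{\DD,X}$ acts only on the $\bar H_U$-factor via the left regular representation), and finally that $\beta_g$ is isometric for the universal C$^*$-norm, hence extends to an automorphism of $A_{\DD,X}$.

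The main obstacle I expect is the multiplicativity check: the product in $\mathcal{A}_{\DD,X}$ is built from $\mu^{U,V}$ composed with a Peter--Weyl decomposition of $U\otimes V$ into irreducibles, so that $\beta_g$ being a homomorphism amounts to a precise match between $\beta^g_2$, $\zeta^g$, and those structural morphisms. Strictification eliminates the pure associativity bookkeeping, but the interplay between $\zeta^g\colon\beta^g(X)\to X$, the module coherences $\beta^g_2$, and the tensor decomposition on $\mathcal{A}_{\DD,X}$ still requires a careful diagram chase. Once both constructions are in place, the fact that they determine each other up to the natural equivalences appearing in Theorem~\ref{tactions} follows from the corresponding non-equivariant statement.
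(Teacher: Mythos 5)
Your proposal is correct and follows essentially the same route as the paper: identify $\DD$ with $\DD_B$ via Theorem~\ref{tactions} and use Example~\ref{exmpl:when-D-is-DB} for one direction, and for the converse define $\beta_g$ on the regular algebra by conjugating $\beta^g(T)$ with $\zeta^g$, with multiplicativity of $g\mapsto\beta_g$ coming from the cocycle identity $\zeta^{gh}=\zeta^g\beta^g(\zeta^h)$. The only difference is that the paper applies Proposition~\ref{prop:strict-module} to the $(\Rep G)\rtimes\Gamma$-module structure so that the coherences $\beta^g_2$ also become identities, which removes the diagram chase you flag as the main remaining obstacle.
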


\begin{proof}
If $B=A_{\DD,X}$, then by Theorem~\ref{tactions} we may assume that $\DD$ is the category $\DD_B$ introduced above and $X$ is $B$ itself.
Assume we are given an action $\beta$ of $\Gamma$ on $B$ turning it into a $\Gamma$-equivariant $G$-C$^*$-algebra. Then by Example~\ref{exmpl:when-D-is-DB} we can define a strict action of $\Gamma$ on $\DD_B$, so that $\DD_B$ becomes a $\Gamma$-$(\Rep G)$-module category. Taking as $\zeta^g\colon\beta^g(X)\to X$ the maps $\beta_g\colon B\to B$, we see that $(X,\zeta)\in\DD^\Gamma_B$.

Conversely, suppose that $\DD$ is a $\Gamma$-$(\Rep G)$-module C$^*$-category, with the action of $\Gamma$ on $\DD$ denoted by $\beta$, and $X$ is fixed by $\Gamma$. By Proposition~\ref{prop:strict-module} we may assume that the $(\Rep G)\rtimes\Gamma$-module category $\DD$ is strict, so that both the action of $\Gamma$ and the $(\Rep G)$-module structure on~$\DD$ are strict and the isomorphisms $\beta^g_2\colon\beta^g(Y)\times\alpha^g(U)\to\beta^g(Y\times U)$ are the identities.

 Choose $\zeta$ such that $(X,\zeta)\in\DD^\Gamma$. Note that for every irreducible representation $U$ of $G$, the space $\bar H_U\otimes\DD(X,X\times U)$ can be identified with a subspace of $\mathcal{A}_{\DD, X}$ in a canonical way, no matter which representatives of irreducible representations were used to define $\mathcal{A}_{\DD, X}$. We then define a map $\beta_g\colon\mathcal{A}_{\DD, X}\to \mathcal{A}_{\DD, X}$ by
$$
\bar H_U\otimes\DD(X,X\times U)\ni\bar\xi\otimes T\mapsto \bar\xi\otimes(\zeta^g\times\iota)\beta^g(T)(\zeta^g)^{-1}
\in \bar H_{\alpha^g(U)}\otimes\DD(X,X\times \alpha^g(U)).
$$
It is easy to see that this is a $*$-automorphism. The cocycle identity $\zeta^{gh}=\zeta^g\beta^g(\zeta^h)$ implies that $\beta_{gh}=\beta_g\beta_h$, so we get an action of $\Gamma$ on $A_{\DD,X}$. Finally, it is again immediate by definition that the action of $G$ on $A_{\DD,X}$ is $\Gamma$-equivariant.
\end{proof}

\begin{remark}\label{rem:how-to-get-crossed-prod-comod-alg}
Assume $(\DD,(X,\zeta))$ is as in the above proof. If we consider $\DD$ as a $(\Rep G)\rtimes\Gamma$-module and carry out the reconstruction for $(\DD,X)$, then we obtain the $C(G)\rtimes_{ \alpha, r}\Gamma$-comodule algebra $A_{\DD,X}\rtimes_{\beta, r} \Gamma$. If the $(\Rep G)\rtimes\Gamma$-module category structure is strict, the canonical unitaries $u_g\in A_{\DD,X}\rtimes_{\beta,r} \Gamma$ are given by $u_g=\bar 1\otimes(\zeta^{g^{-1}})^{-1}\in \bar\C\otimes\DD(X,\beta^{g^{-1}}(X))=\bar \C\otimes\DD(X,X\times g)$.
\end{remark}

Recall that the universal grading group of a monoidal category $\CC$ is called the \emph{chain group} and denoted by~$\Ch(\CC)$~\citelist{\cite{MR2097019}\cite{MR2130607}} (see also~\cite{MR733774}). For $\CC = \Rep G$, we also denote this group by $\Ch(G)$.

We are now ready to establish a duality between the constructions of $R^{t,\beta}$ and $\DD^{t,\beta}$.

\begin{proposition}
Let $G$ be a reduced compact quantum group, $\Gamma$ a quotient of $\Ch(G)$, and $\alpha$ an invariant action of $\Gamma$ on $G$ with respect to the associated $\Gamma$-grading. Let $\DD$ be a $\Gamma$-$(\Rep G)$-module C$^*$-category, with the action of $\Gamma$ denoted by $\beta$, and $(X,\zeta)$ be an object in $\DD^\Gamma$. Consider the corresponding $\Gamma$-equivariant $G$-C$^*$-algebra $A_{\DD,X}$, and denote the action of $\Gamma$ again by $\beta$.
Then the $G^{t,\alpha}$-C$^*$-algebra~$A_{\DD^{t,\beta},X}$ associated with the $(\Rep G^{t,\alpha})$-module category $\DD^{t,\beta}$ and the object $X$ is isomorphic to the closure $(A_{\DD, X})^{t,\beta}$ of $(\A_{\DD,X})^{t,\beta}$ in $A_{\DD, X} \rtimes_{\beta,r}  \Gamma$.
\end{proposition}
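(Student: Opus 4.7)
The plan is to compare both algebras inside the reduced crossed product $A_{\DD,X}\rtimes_{\beta,r}\Gamma$, using the naturality of the Tannaka--Krein reconstruction with respect to restriction of the acting category.

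First, I would view $\DD$ as a $(\Rep G)\rtimes\Gamma$-module C$^*$-category via Proposition~\ref{prop:eqv-action-is-crossed-product-action}, and apply the reconstruction to the pair $(\DD,X)$. Since $X\times g=\beta^{g^{-1}}(X)$, which is unitarily isomorphic to $X$ through $\zeta^{g^{-1}}$, the object $X$ remains a generator. By Remark~\ref{rem:how-to-get-crossed-prod-comod-alg} the corresponding $C(G)\rtimes_{\alpha,r}\Gamma$-comodule C$^*$-algebra is precisely $A_{\DD,X}\rtimes_{\beta,r}\Gamma$, with canonical unitaries $u_g=\bar 1\otimes(\zeta^{g^{-1}})^{-1}$.

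Next, recall that $\Rep G^{t,\alpha}$ is, by construction, the full monoidal subcategory of $\Rep G\rtimes\Gamma$ whose objects are direct sums of $U\boxtimes g$ with $U\in(\Rep G)_g$, and that the $(\Rep G^{t,\alpha})$-module structure on $\DD^{t,\beta}$ is just the restriction of the $(\Rep G)\rtimes\Gamma$-structure. Consequently, $\mathcal{A}_{\DD^{t,\beta},X}$ is the subspace of the regular algebra of the previous paragraph spanned by the summands indexed by the irreducibles of $G^{t,\alpha}$:
\begin{equation*}
\mathcal{A}_{\DD^{t,\beta},X}=\bigoplus_{g\in\Gamma}\bigoplus_{[U]\in\Irr(G)_g}\bar H_{U\boxtimes g}\otimes\DD(X,X\times(U\boxtimes g)).
\end{equation*}
After applying the strictification of Section~\ref{sec:strictification}, we may assume the $\Gamma$-action and the module structures are strict, so $X\times(U\boxtimes g)=\beta^{g^{-1}}(X\times U)$. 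Precomposition with $(\zeta^{g^{-1}})^{-1}\colon X\to\beta^{g^{-1}}(X)$ (and the functoriality of $\beta^{g^{-1}}$) gives a natural isomorphism $\DD(X,X\times U)\cong\DD(X,X\times(U\boxtimes g))$, under which $T$ corresponds to $\beta^{g^{-1}}(T)\circ(\zeta^{g^{-1}})^{-1}$.

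I would then define the candidate isomorphism $\Phi\colon\mathcal{A}_{\DD^{t,\beta},X}\to(\mathcal{A}_{\DD,X})^{t,\beta}$ by
\begin{equation*}
\bar\xi\otimes\bigl(\beta^{g^{-1}}(T)(\zeta^{g^{-1}})^{-1}\bigr)\longmapsto(\bar\xi\otimes T)\cdot u_g,
\end{equation*}
for $U\in\Irr(G)_g$ and $T\in\DD(X,X\times U)$. Bijectivity onto $\bigoplus_g(\mathcal{A}_{\DD,X})_g\otimes g=(\mathcal{A}_{\DD,X})^{t,\beta}$ is immediate from the $\Gamma$-grading of $A_{\DD,X}$ induced by $\Ch(G)\to\Gamma$. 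The compatibility with the $*$-structure follows from $u_g^{*}=u_{g^{-1}}$ together with the naturality of $\zeta$.

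The one substantive point is the multiplicativity of $\Phi$. The product in $\mathcal{A}_{\DD^{t,\beta},X}$ uses the twisted tensor product $U_1\otimes_\alpha U_2=U_1\otimes\alpha^{g_1}(U_2)$ for $U_i\in(\Rep G)_{g_i}$, whereas on the right-hand side we have the crossed-product relation $u_{g_1}(a\otimes \iota)u_{g_1}^{-1}=\beta_{g_1}(a)$ on $A_{\DD,X}$; these two pieces of data are matched precisely by the functoriality of the Tannaka--Krein reconstruction applied to the inclusion $\Rep G^{t,\alpha}\hookrightarrow\Rep G\rtimes\Gamma$. The main obstacle is therefore the bookkeeping of strictifications and of the $\zeta^{g^{-1}}$-terms absorbed into the $u_g$; once this is done, $\Phi$ is a $*$-algebra isomorphism of dense $*$-subalgebras, and since both ambient completions live canonically inside $A_{\DD,X}\rtimes_{\beta,r}\Gamma$, passing to closures yields the asserted C$^*$-isomorphism $A_{\DD^{t,\beta},X}\cong(A_{\DD,X})^{t,\beta}$.
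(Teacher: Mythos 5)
Your proposal is correct and follows essentially the same route as the paper: both arguments embed $\mathcal{A}_{\DD^{t,\beta},X}$ into the reconstruction of $(\DD,X)$ as a $(\Rep G)\rtimes\Gamma$-module category, identify that ambient algebra with $A_{\DD,X}\rtimes_{\beta,r}\Gamma$ via Remark~\ref{rem:how-to-get-crossed-prod-comod-alg}, and recognize the image as the diagonal subalgebra $(\A_{\DD,X})^{t,\beta}$. You simply make explicit (via the $\zeta^{g^{-1}}$-twist and the unitaries $u_g$) the linear identification that the paper dismisses as straightforward.
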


\begin{proof}
As linear spaces, $(\A_{\DD, X})^{t,\beta}$ and $\A_{\DD^{t,\beta},X}$ can be identified in a straightforward way. Moreover, since $\Pol(G^{t,\alpha})$ can be regarded as a sub-$*$-bialgebra of $\Pol(G) \rtimes_{\alpha} \Gamma$, the algebra $\A_{\DD^{t,\beta},X}$ embeds into $A_{\DD \curvearrowleft \Rep G \rtimes_\alpha \Gamma, X}$, which is the crossed product $A_{\DD, X} \rtimes_{\beta,r}  \Gamma$ by Remark~\ref{rem:how-to-get-crossed-prod-comod-alg}. Combining these observations, we obtain that $A_{\DD^{t,\beta},X}$ can be indeed identified with the `diagonal' subalgebra of $A_{\DD, X} \rtimes_{\beta,r}  \Gamma$.
\end{proof}

\section{Poisson boundaries of twisted categories}
\label{sec:poisson-boundary}

In this final section we study the relation between the \emph{categorical Poisson boundaries} and graded twisting. We follow the conventions and terminology of~\cite{arXiv:1405.6572}. In particular, we assume that~$\CC$ is a rigid C$^*$-tensor category, closed under finite direct sums and subobjects, with simple unit and (at most) countable number of irreducible classes.

Fix representatives $\{U_s\}_s$ of isomorphism classes of simple objects in $\CC$. Denote by $m^s_{rt}$ the multiplicity of $U_s$ in $U_r\otimes U_t$. More generally, for any object $X$ we denote by $m^s_{Xt}$ the multiplicity of $U_s$ in $X\otimes U_t$ and write $\Gamma_X$ for the matrix  with entries $a_{st}=m^s_{Xt}$.

For each $s$, consider the Markov operator
$$
P_s\colon \ell^\infty(\Irr(\CC)) \to \ell^\infty(\Irr(\CC)), \quad (P_s f)(r) = \sum_{t \in X} p_s(r, t) f(t)
$$
with transition probabilities
$$
p_s(r,t)=m^t_{sr}\frac{d(t)}{d(s)d(r)},
$$
where we write $d(s)$ for the intrinsic dimension $d(U_s)$ of $U_s$.
For an arbitrary probability measure~$\mu$ on $\Irr(\CC)$ we put $P_\mu=\sum_s\mu(s)P_s$.

Recall that $\CC$ is said to be \emph{weakly amenable} if there is a left-invariant mean on $\ell^\infty( \Irr (\CC))$, that is, a state invariant under the operators $P_s$. This is equivalent to existence of an \emph{ergodic} probability measure $\mu$, meaning that the only $P_\mu$-invariant functions in $\ell^\infty(\Irr(\CC))$ are scalars. Recall also that $\CC$ is called \emph{amenable} if $d(X)=\|\Gamma_X\|$ for all $X$.

The next proposition was already proved~\cite{MR3580173} when $\CC$ is of the form $\Rep G$ for some coamenable compact quantum group $G$.

\begin{proposition}
 Suppose that $\CC$ is weakly amenable. Then its chain group $\Ch(\CC)$ is amenable.
\end{proposition}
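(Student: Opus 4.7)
The plan is to push the invariant mean forward along the canonical degree map from $\Irr(\CC)$ to $\Ch(\CC)$. Since $\CC$ is $\Ch(\CC)$-graded, every simple object $U_s$ lies in a unique homogeneous component, and we obtain a map $\deg\colon \Irr(\CC) \to \Ch(\CC)$. By definition of the chain group, the image of $\deg$ generates $\Ch(\CC)$ as a group (and in fact already as a monoid, since $\deg(U_s^*) = \deg(U_s)^{-1}$ because $\un$ is a subobject of $U_s \otimes U_s^*$).

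The key observation is that the Markov kernels $p_s(r,t)$ are compatible with the grading: since $p_s(r,t) \ne 0$ only when $U_t$ is a subobject of $U_s \otimes U_r$, and subobjects of a homogeneous object are homogeneous of the same degree, we have
\[
p_s(r,t) \ne 0 \implies \deg(t) = \deg(s)\deg(r).
\]
Consequently, for any $\phi \in \ell^\infty(\Ch(\CC))$ and any $s$, setting $f = \phi \circ \deg$ we get
\[
(P_s f)(r) = \sum_t p_s(r,t)\, \phi(\deg(t)) = \phi(\deg(s)\deg(r))\sum_t p_s(r,t) = \phi(\deg(s)\deg(r)),
\]
the last equality using that $P_s$ is a Markov operator. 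In other words, $P_s f = (L_{\deg(s)}\phi) \circ \deg$, where $L_g$ denotes the left-translation operator $(L_g\phi)(h) = \phi(gh)$.

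Now, given a weakly amenable $\CC$, fix a left-invariant mean $m$ on $\ell^\infty(\Irr(\CC))$, so that $m(P_s f) = m(f)$ for every $s$ and $f$. Define a state $\bar m$ on $\ell^\infty(\Ch(\CC))$ by
\[
\bar m(\phi) = m(\phi \circ \deg).
\]
By the computation above, for every $s \in \Irr(\CC)$ we have
\[
\bar m(L_{\deg(s)}\phi) = m(P_s f) = m(f) = \bar m(\phi).
\]
Thus $\bar m$ is invariant under left translation by every element of the form $\deg(s)$. Since these elements, together with their inverses $\deg(s^*)$, generate $\Ch(\CC)$, the state $\bar m$ is an invariant mean on $\Ch(\CC)$, so this group is amenable.

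There is no real obstacle: the argument is essentially the one observation that the Markov kernels respect the grading. The only thing to verify carefully is that the image of $\deg$ generates $\Ch(\CC)$ (so that invariance under the translations $L_{\deg(s)}$ extends to invariance under the whole group), which follows from the very definition of the chain group as the universal grading group together with the existence of duals in $\CC$.
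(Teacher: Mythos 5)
Your proof is correct, but it follows a genuinely different and more elementary route than the one in the paper. You push the invariant mean forward along the degree map $\deg\colon\Irr(\CC)\to\Ch(\CC)$, using two facts: that $p_s(r,t)\neq 0$ forces $\deg(t)=\deg(s)\deg(r)$ (every simple summand of the homogeneous object $U_s\otimes U_r$ has that degree), and that $\sum_t p_s(r,t)=1$. This converts $P_s$-invariance of the mean $m$ directly into invariance of $\bar m$ under translation by each $\deg(s)$; since the set of translations fixing $\bar m$ is a subgroup and the $\deg(s)$ generate $\Ch(\CC)$ (by rigidity they in fact exhaust it), $\bar m$ is an invariant mean and $\Ch(\CC)$ is amenable. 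The paper argues differently: it first invokes the amenability of the Poisson boundary for an ergodic measure to obtain the dimension function $d_a(X)=\|\Gamma_X\|$, introduces the associated convolution of probability measures on $\Irr(\CC)$, and uses the Kesten-type characterization of amenability of $d_a$ due to Hiai and Izumi, namely $\lim_n\sqrt[2n]{\mu^{2n}([\un])}=1$ for a symmetric nondegenerate $\mu$; it then checks that the pushforward of measures to $\Ch(\CC)$ intertwines convolutions, so that $\tilde\mu^{2n}(e)\ge\mu^{2n}([\un])$, and concludes by Kesten's criterion for groups. Your argument dispenses with both the Poisson boundary input and the spectral-radius criterion, working directly from the definition of weak amenability as the existence of a $P_s$-invariant state; what the paper's route buys is the explicit connection with the amenable dimension function $d_a$, which fits the machinery used in the rest of that section. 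Both arguments are complete.
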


\begin{proof}
From the amenability of the Poisson boundary for an ergodic measure~\cite{arXiv:1405.6572}, we know that the map $X\mapsto \|\Gamma_X\|$ is a dimension function, which we denote by $d_a$. Thus, we obtain a semiring homomorphism from $\Z_{\ge 0}[\Irr(\CC)]$ to $\R_{\ge 0}$ by sending $[X]$ to $d_a(X)$.

Consider the convolution of measures on $\Irr(\CC)$ defined using the dimension function $d_a$, so
$$
(\nu*\mu)(t)=\sum_{s,r}\nu(s)\mu(r)m^t_{sr}\frac{d_a(t)}{d_a(s) d_a(r)}.
$$
We write $\mu^n$ instead of $\mu^{*n}$. Let $\mu$ be any nondegenerate probability measure on~$\Irr(\CC)$ which is symmetric, i.e., $\mu(\bar{s}) = \mu(s)$ for $s \in \Irr(\CC)$, with $U_{\bar{s}} \simeq \bar{U}_s$. One of equivalent ways of expressing the amenability of $d_a$~\cite{MR1644299}*{Section~4} is
$$
\lim_{n\to\infty}\sqrt[2n]{\mu^{2n}([\un])} = 1.
$$
Define a probability measure on $\Ch(\CC)$ by $\tilde{\mu}(g) = \sum_{[U]\in\Irr(\CC)\colon g_U=g} \mu([U])$, where $g_U\in\Ch(\CC)$ is the degree of a simple object $U$. We have
$$
\widetilde{\mu*\nu}=\sum_{[U], [V] \in \Irr(\CC), W\subset U\otimes V}\mu([U])\nu([V])\frac{d_a( W)}{d_a (U) d_a (V)} \delta_{g_W},
$$
where $W$ runs through a maximal family of mutually orthogonal simple summands of $U \otimes V$. Since for such $W$ we have $g_W=g_Ug_V$ by definition of the chain group, and since
$$
\sum_{W\subset U\otimes V} d_a (W) = d_a (U) d_a( V),
$$
we get
$$
\widetilde{\mu*\nu}=\sum_{[U],[V]} \mu([U])\nu([V]) \delta_{g_U g_V} = \tilde{\mu} * \tilde{\nu}.
$$
It follows that $\tilde{\mu}^{2n}(e) \ge \mu^{2n}([\un])$, which implies the amenability of $\Ch(\CC)$.
\end{proof}

\begin{proposition}\label{prop:wk-amen-perm-cross-prod}
Suppose that $\Ch(\CC)\to \Gamma$ is a surjective homomorphism and $\alpha\colon\Gamma \curvearrowright \CC$ is an invariant weak action on $\CC$ with respect to the associated $\Gamma$-grading. Then
\begin{enumerate}
\item if $\CC$ is weakly amenable, then $\CC^{t,\alpha}$ and $\CC\rtimes\Gamma$ are weakly amenable;
\item if $\CC$ is amenable, then $\CC^{t,\alpha}$ and $\CC\rtimes\Gamma$ are amenable.
\end{enumerate}

\end{proposition}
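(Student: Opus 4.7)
The plan is to address (ii) first by a direct fusion-matrix computation, and then (i) by constructing appropriate invariant means, with Proposition~5.1 serving as the bridge that makes $\Gamma$ amenable whenever $\CC$ is weakly amenable. Throughout, I will use the tautological identifications $\Irr(\CC^{t,\alpha})\simeq\Irr(\CC)$ and $\Irr(\CC\rtimes\Gamma)\simeq\Irr(\CC)\times\Gamma$, together with the fact that each $\alpha^g$ induces a dimension-preserving permutation of $\Irr(\CC)$ that preserves the $\Gamma$-grading.

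For (ii), the key point is that the intrinsic dimension of a simple object is unchanged in either construction: in $\CC^{t,\alpha}$ the dual of $X\in\CC_g$ becomes $\alpha^{g^{-1}}(X^*)$, still of dimension $d(X)$, while in $\CC\rtimes\Gamma$ the simple $(X,g)=X\boxtimes g$ tautologically has dimension $d(X)$. I would then observe that the fusion matrix $\Gamma^{t,\alpha}_X$ for $X\in\CC_g$ is obtained from $\Gamma_X$ by permuting columns via $\alpha^g$, and $\Gamma_{(X,g)}$ is $\Gamma_X$ permuted similarly in the $\Irr(\CC)$ coordinate, tensored with the permutation matrix for left translation by $g$ on $\ell^2(\Gamma)$. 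Since permutation matrices are isometries, $\|\Gamma^{t,\alpha}_X\|=\|\Gamma_{(X,g)}\|=\|\Gamma_X\|=d(X)$, giving amenability.

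For (i), by Proposition~5.1 the chain group $\Ch(\CC)$ is amenable, hence so is its quotient~$\Gamma$. The heart of the argument is to produce a left-invariant mean $m_\CC$ on $\ell^\infty(\Irr(\CC))$ that is \emph{simultaneously} $\Gamma$-invariant with respect to the $\alpha^g$-permutations. This I would do by averaging: the identity $L_g\circ P_X=P_{\alpha^g(X)}\circ L_g$ (where $L_g f=f\circ\alpha^g$) shows that $\Gamma$ acts on the convex weak-$*$ compact set of left-invariant means, and amenability of $\Gamma$ then supplies a fixed point $m_\CC$. With $m_\CC$ in hand, $m_\CC$ itself serves as a left-invariant mean on $\Irr(\CC^{t,\alpha})$ via the identity $P^{t,\alpha}_X f=(P_X f)\circ\alpha^g$ for $X\in\CC_g$, which is absorbed by the $\Gamma$-invariance. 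For $\CC\rtimes\Gamma$, I would pick a left-invariant mean $m_\Gamma$ on $\Gamma$ and set $m(f)=m_\Gamma(h\mapsto m_\CC(f(\cdot,h)))$; using the fusion rule $m^{(Z,k)}_{(X,g),(Y,h)}=\delta_{k,gh}\,m^Z_{X,\alpha^g(Y)}$, invariance under $P_{(X,g)}$ reduces, after applying $m_\CC$ in the first coordinate (using $P_X$-invariance and $\Gamma$-invariance), to left-translation invariance of $m_\Gamma$.

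The main obstacle I anticipate is the $\Gamma$-equivariant averaging step producing $m_\CC$: without Proposition~5.1 there would be no reason the two grading symmetries, one coming from the fusion-ring random walk on $\CC$ and the other from the external action of $\Gamma$, could be made to cooperate on a single mean. Once that fixed mean is obtained, the remaining verifications on $\CC^{t,\alpha}$ and $\CC\rtimes\Gamma$ are essentially formal manipulations of the fusion coefficients.
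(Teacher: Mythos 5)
Your part (i) is essentially the paper's own argument: the authors also first make the invariant mean $m_\CC$ on $\ell^\infty(\Irr(\CC))$ simultaneously $\Gamma$-invariant (they do the averaging explicitly, $\tilde m_\CC(f)=m_\Gamma(g\mapsto m_\CC(f_g))$, rather than via a fixed-point theorem, but this is the same idea and rests on the same commutation relation between the Markov operators and the $\Gamma$-action), and then use $m_\CC$ for $\CC^{t,\alpha}$ and the double average $m_\Gamma(g\mapsto m_\CC(f(\cdot,g)))$ for $\CC\rtimes\Gamma$. That half is fine.

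Part (ii), however, has a genuine gap. Your permutation computation only establishes $\|\Gamma_X\|=d(X)$ for \emph{simple} (or at least homogeneous) objects of $\CC^{t,\alpha}$ and $\CC\rtimes\Gamma$, whereas amenability as defined here requires this for \emph{all} objects, and the implication ``true on simples $\Rightarrow$ true everywhere'' fails: for a general $X=\bigoplus_i U_i\boxtimes g_i$ one only gets $\|\Gamma_X\|\le\sum_i\|\Gamma_{U_i\boxtimes g_i}\|$, and the reverse inequality is exactly where amenability can break down. Concretely, in the group ring $\Z[\Gamma]$ (the fusion ring of $\mathrm{Vect}\rtimes\Gamma$) every simple is invertible with $\|\Gamma_{u_g}\|=1=d(u_g)$, yet the fusion ring is non-amenable whenever $\Gamma$ is; since your argument for (ii) never uses amenability of $\Gamma$, it would ``prove'' amenability of $\CC\rtimes\Gamma$ in situations where it is false. (Within the hypotheses of the proposition $\Gamma$ is forced to be amenable, being a quotient of $\Ch(\CC)$, but a correct proof must actually exploit this.) The paper closes this gap by doing (i) first: weak amenability of $\CC\rtimes\Gamma$ implies, by the Poisson boundary results of \cite{arXiv:1405.6572}, that $X\mapsto\|\Gamma_X\|$ is a \emph{dimension function}, hence additive; only then does the simple-object computation $\|\Gamma_{U\boxtimes g}\|=\|\Gamma_U\|=d(U)=d(U\boxtimes g)$ (which the authors obtain even more cheaply, from fullness of $\CC\subset\CC\rtimes\Gamma$ and invertibility of $\un\boxtimes g$, rather than from an explicit permutation of the fusion matrix) extend by additivity to all objects. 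To repair your proof, either import that additivity statement after proving (i), or supply some other argument for $\|\sum_i\Gamma_{U_i\boxtimes g_i}\|\ge\sum_i d(U_i)$; as written, the step ``giving amenability'' does not follow.
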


\begin{proof} (i) By the previous proposition we already know that $\Gamma$ is amenable. Let $m_\CC$ be a left invariant mean on $\ell^\infty (\Irr(\CC))$, and $m_\Gamma$ be a left invariant mean on $\ell^\infty( \Gamma)$. The group $\Gamma$ acts on~$\Irr(\CC)$ and we may assume that $m_\CC$ is invariant under this action. Indeed, for $f\in\ell^\infty(\Irr(\CC))$ and $g\in\Gamma$ put $f_g=f(g^{-1}\cdot)$. Define a state $\tilde m_\CC$ on $\ell^{\infty}(\Irr(\CC))$ by
$$
\tilde m_\CC=m_\Gamma(g\mapsto m_\CC(f_g)).
$$
This state is $\Gamma$-invariant, so $\tilde m_\CC(f_g)=\tilde m_\CC(f)$, and it is still an invariant mean, which can be easily seen by checking first that $P_s(f)_g=P_{g s}(f_g)$ for all $s\in\Irr(\CC)$ and $g\in\Gamma$.

Now, in order to prove that $\CC^{t,\alpha}$ is weakly amenable, let us identify $\Irr(\CC^{t,\alpha})$ with $\Irr(\CC)$. We denote by $P^{t,\alpha}_s$ the Markov operators on $\ell^\infty(\Irr(\CC))=\ell^\infty(\Irr(\CC^{t,\alpha}))$ defined by $\CC^{t,\alpha}$. Then if $s\in\Irr(\CC)$ has degree $g$, we have
\begin{equation*}
P^{t,\alpha}_s(f)=P_{s}(f_{g^{-1}}).
\end{equation*}
As $m_\CC$ is $\Gamma$-invariant, from this we see that $m_\CC$ is a left-invariant mean for $\CC^{t,\alpha}$.

It is even easier to see that if we identify $\Irr(\CC\rtimes\Gamma)$ with $\Irr(\CC)\times\Gamma$, then the formula
$$
f\mapsto m_\Gamma(g\mapsto m_\CC(f(\cdot,g)))
$$
defines a left-invariant mean for $\CC\rtimes\Gamma$.

\smallskip

(ii) We already know that $\CC\rtimes\Gamma$ is weakly amenable. Hence $X\mapsto\|\Gamma_X\|$ is a dimension function on $\CC\rtimes\Gamma$. Since $\CC$ is a full rigid monoidal subcategory of $\CC\rtimes\Gamma$, we have $\|\Gamma_U\| = \|\Gamma_{U\boxtimes e}\|$. Moreover, since $\un\boxtimes g$ is invertible, we have
$$
\| \Gamma_{U\boxtimes g}\| =  \|\Gamma_{U\boxtimes e}\| = \|\Gamma_U\|.
$$

On the other hand, the intrinsic dimension satisfies $d(U\boxtimes g) = d(U)$. Combining these observations, we obtain
$$
\| \Gamma_{\bigoplus_i U_i \boxtimes g_i} \| = \sum_i \|\Gamma_{U_i}\| = \sum_i d(U_i) = \sum_i d(U_i \boxtimes g_i).
$$
This shows the amenability of $\CC \rtimes \Gamma$. As $\CC^{t,\alpha}$ is a full subcategory of $\CC\rtimes\Gamma$, it is also amenable.
\end{proof}

We next want to compare the Poisson boundaries of twisted categories. As above, assume that a rigid C$^*$-tensor category $\CC$ is $\Gamma$-graded using a surjective homomorphism $\Ch(\CC)\to\Gamma$, and that we are given an invariant action $\alpha$ of $\Gamma$ on $\CC$. By the considerations in Section~\ref{sec:strictification}, we may assume that both $\CC$ and the action $\alpha$ are strict, and that $\alpha^g$ is a strict monoidal autoequivalence for all $g$.

Consider the category $\hat\CC$ as in~\cite{arXiv:1405.6572}, which is the C$^*$-tensor category (with nonsimple unit) with the same objects as in $\CC$ and the morphisms $U\to V$ given by bounded natural transformations between the endofunctors $\iota\otimes U\colon X \mapsto X \otimes U$ and $\iota\otimes V$ on $\CC$. Then the category $\hat\CC$ is still $\Gamma$-graded and we have an invariant action of $\Gamma$ on it, so we can form a graded twisting ${\hat\CC\,}^{t,\alpha}$. Define a functor
$F\colon {\hat\CC\,}^{t,\alpha}\to\widehat{\CC^{t,\alpha}}$ as follows. On the one hand we set $F(U\boxtimes g)=U\boxtimes g$ for the objects. On the other hand, take a morphism $\eta\in {\hat\CC\,}^{t,\alpha}(U\boxtimes g,V\boxtimes g)$. By definition it has the form $\tilde\eta\boxtimes\iota$, with $\tilde\eta\in\hat\CC(U,V)$. We then define $F(\eta)\in \widehat{\CC^{t,\alpha}}(U\boxtimes g,V\boxtimes g)$ as the unique morphism such that
\begin{equation}\label{eq:twist-C-hat-functor}
F(\eta)_{X\boxtimes h}=\alpha^h(\tilde\eta_{\alpha^{h^{-1}}(X)})\boxtimes\iota\colon (X\otimes\alpha^h(U))\boxtimes h g \to (X\otimes\alpha^h(V))\boxtimes h g
\end{equation}
for all $h\in\Gamma$ and $X\in\CC_h$.

\begin{lemma}
The functor $F \colon {\hat\CC\,}^{t,\alpha}\to\widehat{\CC^{t,\alpha}}$ is a unitary strict monoidal isomorphism of categories.
\end{lemma}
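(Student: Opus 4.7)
The four things to verify are that $F$ is well-defined (lands in $\widehat{\CC^{t,\alpha}}$), functorial, bijective on both objects and morphisms, $*$-preserving, and strict monoidal. Since $F$ is the identity on objects and the objects of the two categories are identified in the same way, bijectivity on objects and matching of the tensor product on objects are automatic.

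For well-definedness I would check naturality of $F(\eta)$ in $\widehat{\CC^{t,\alpha}}$. Because $\CC^{t,\alpha}$ has no nonzero morphisms between objects of different grading, naturality only has to be tested within each homogeneous piece, and for $T\in\CC_h(X,X')$ the required identity reduces, via $T\otimes\iota=\alpha^h(\alpha^{h^{-1}}(T)\otimes\iota)$, to applying $\alpha^h$ to the naturality square of $\tilde\eta$ at $\alpha^{h^{-1}}(T)$; here we use that, after the strictifications of Section~\ref{sec:strictification}, each $\alpha^g$ is a strict tensor autoequivalence with $\alpha^g\alpha^{g^{-1}}=\Id$. Functoriality and preservation of the $*$-structure are then immediate, the latter because $\alpha^h$ is a $*$-functor and the involution on both $\hat\CC$ and $\widehat{\CC^{t,\alpha}}$ is computed componentwise.

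For bijectivity on morphisms I would exhibit an inverse. Given $\mu\in\widehat{\CC^{t,\alpha}}(U\boxtimes g,V\boxtimes g)$ define $\tilde\eta\in\hat\CC(U,V)$ by
\begin{equation*}
\tilde\eta_Y=\alpha^{k^{-1}}(\mu_{\alpha^k(Y)\boxtimes k})\quad\text{for }Y\in\CC_k,
\end{equation*}
extended to arbitrary $Y\in\CC$ via the decomposition $Y=\bigoplus_k Y_k$. Naturality of $\tilde\eta$ within each $\CC_k$ comes from naturality of $\mu$ at morphisms $T\boxtimes\iota$, and the vanishing of cross-degree morphisms in $\CC$ makes full naturality automatic; substituting back and using $\alpha^h\alpha^{h^{-1}}=\Id$ yields $F(\tilde\eta)=\mu$, while injectivity of $F$ is evident because $F(\eta)_{X\boxtimes e}=\tilde\eta_X\boxtimes\iota$ already recovers $\tilde\eta$ on $\CC_e$ (and the analogous components on the other pieces).

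The remaining step is strict monoidality of $F$ on morphisms, which I expect to be the principal obstacle---not conceptually, but because of the bookkeeping involved. One unrolls the stacking formula $(\eta_1\otimes\eta_2)_W=(\eta_2)_{W\otimes V_1}\circ((\eta_1)_W\otimes\iota)$ in both $\hat\CC$ and $\widehat{\CC^{t,\alpha}}$, and observes that the functor $\alpha^{g_1}$ on $\hat\CC$ which implements the twist in the tensor product of $\hat\CC^{t,\alpha}$ is given by exactly the same rule as $F$ itself, namely $(\alpha^{g_1}\tilde\eta_2)_X=\alpha^{g_1}(\tilde\eta_{2,\alpha^{g_1^{-1}}(X)})$. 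Invoking strict monoidality of each $\alpha^k$ (available after strictification), the two sides of $F(\eta_1\otimes\eta_2)_{Z\boxtimes h}=(F(\eta_1)\otimes F(\eta_2))_{Z\boxtimes h}$ agree factor by factor, completing the proof.
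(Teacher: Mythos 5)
Your proposal is correct and follows essentially the same route as the paper: the paper likewise treats bijectivity and $*$-preservation as immediate and reduces everything to checking that $F$ is a strict tensor functor, which it verifies by the same mechanism you describe (the twist in ${\hat\CC\,}^{t,\alpha}$ is implemented by the extension of $\alpha^k$ to $\hat\CC$ given by the same conjugation formula as $F$, combined with strictness of the $\alpha^g$). The only difference is in which steps are written out: you make the inverse and the naturality check explicit, while the paper instead carries out the sample computation $F(\iota_{Y\boxtimes k}\otimes\eta)=\iota_{Y\boxtimes k}\otimes F(\eta)$ in full.
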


\bp It is immediate that $F$ is bijective on morphisms and objects and that it is $*$-preserving. So we only need to check that it is a strict tensor functor. This is a routine verification. Let us check, for example, that $F(\iota_{Y\boxtimes k}\otimes\eta)=\iota_{Y\boxtimes k}\otimes F(\eta)$. Using the notation before the lemma, we have
$$
(\iota_{Y\boxtimes k}\otimes F(\eta))_{X\boxtimes h}=F(\eta)_{(X\boxtimes h)\otimes(Y\boxtimes k)}=F(\eta)_{(X\otimes\alpha^h(Y))\boxtimes hk}
=\alpha^{hk}(\tilde\eta_{\alpha^{(hk)^{-1}}(X\otimes\alpha^h(Y))})\boxtimes\iota.
$$
On the other hand,
\begin{align*}
F(\iota_{Y\boxtimes k}\otimes\eta)_{X\boxtimes h}&=F((\iota_Y\otimes\alpha^k(\tilde\eta))\boxtimes\iota)_{X\boxtimes h}
=\alpha^{h}((\iota_Y\otimes\alpha^k(\tilde\eta))_{\alpha^{h^{-1}}(X)})\boxtimes\iota\\
&=\alpha^{h}(\alpha^k(\tilde\eta)_{\alpha^{h^{-1}}(X)\otimes Y})\boxtimes\iota=\alpha^{hk}(\tilde\eta_{\alpha^{k^{-1}}(\alpha^{h^{-1}}(X)\otimes Y)}))\boxtimes\iota,
\end{align*}
and we see that we get the desired equality.
\ep

Define Markov operators $P_s$ on $\hat\CC(U,V)$ by
$$
P_s(\eta)_X=(\tr_s\otimes\iota)(\eta_{U_s\otimes X}),
$$
where $\tr_s\otimes\iota$ is the normalized categorical partial trace.
When $U=V=\un$, so that $\hat\CC(U,V)$ can be identified with $\ell^\infty(\Irr(\CC))$, these are the same operators~$P_s$ that we introduced earlier.

\begin{definition}
We say that a natural transformation $\eta\in\hat\CC(U,V)$ is \emph{absolutely harmonic} if $P_s(\eta)=\eta$ for all $s\in\Irr(\CC)$.
\end{definition}

We will also say that a morphism in $\hat\CC\rtimes\Gamma$ is absolutely harmonic if its homogeneous components are absolutely harmonic.

Denote by $\PP(U,V)\subset\hat\CC(U,V)$ the subspace of absolutely harmonic elements. For $U=V$ this is an ultraweakly closed operator subspace of the von Neumann algebra $\hat\CC(U)$, so it admits at most one structure of a C$^*$-algebra. If $\CC$ is weakly amenable, such a structure indeed exists by results of~\cite{arXiv:1405.6572}. By considering $\PP(U,V)$ as a subspace of $\PP(U\oplus V)$ we then get a composition rule for all absolutely harmonic elements, so $\PP$ becomes a C$^*$-tensor category. We then complete this category with respect to finite direct sums and subobjects and continue to denote this completion by $\PP$. This category together with the embedding functor $\CC\to\PP$ is the Poisson boundary of $\CC$ with respect to any ergodic measure~\cite{arXiv:1405.6572}. We will therefore call it the \emph{absolute Poisson boundary}. When needed, we will write $\PP(\CC)$ instead of $\PP$.

\begin{theorem}\label{thm:twist-poisson}
The strict tensor functor $F\colon  {\hat\CC\,}^{t,\alpha}\to\widehat{\CC^{t,\alpha}}$ defined by~\eqref{eq:twist-C-hat-functor} maps the absolutely harmonic elements onto the absolutely harmonic ones. In particular, if $\CC$ is weakly amenable, then $F$ defines an isomorphism between $\CC^{t,\alpha}\to\PP(\CC)^{t,\alpha}$ and the absolute Poisson boundary of~$\CC^{t,\alpha}$.
\end{theorem}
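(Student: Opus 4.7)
The plan is to establish the first assertion by directly comparing Markov operators via $F$, after which the Poisson boundary statement follows. The main obstacle is the partial trace calculation in $\CC^{t,\alpha}$, which must be carefully related to the partial trace in $\CC$ through the autoequivalences $\alpha^g$; once this is worked out the rest is bookkeeping.

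For the first assertion, let $\eta \in {\hat\CC\,}^{t,\alpha}(U\boxtimes g, V\boxtimes g)$ correspond to $\tilde\eta \in \hat\CC(U,V)$, let $s\in\Irr(\CC)$ label a simple object of $\CC^{t,\alpha}$ identified with $U_s\boxtimes g_s$ of degree $g_s$, and let $X\in\CC_h$. Using $(U_s\boxtimes g_s)\otimes_\alpha(X\boxtimes h) = (U_s\otimes\alpha^{g_s}(X))\boxtimes g_sh$ and the definition~\eqref{eq:twist-C-hat-functor} of $F$, one obtains
\begin{equation*}
F(\eta)_{(U_s\otimes\alpha^{g_s}(X))\boxtimes g_sh} = \alpha^{g_sh}\bigl(\tilde\eta_{\alpha^{(g_sh)^{-1}}(U_s)\otimes\alpha^{h^{-1}}(X)}\bigr)\boxtimes\iota.
\end{equation*}
The key step is then to verify that the partial trace over $U_s\boxtimes g_s$ in $\CC^{t,\alpha}$ equals $\alpha^{g_s^{-1}}$ applied to the partial trace over $U_s$ in $\CC$. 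This follows from the identification of the dual $(U_s\boxtimes g_s)^\ast=\alpha^{g_s^{-1}}(\bar U_s)\boxtimes g_s^{-1}$ together with $\mathrm{ev}_{U_s\boxtimes g_s}=\alpha^{g_s^{-1}}(\mathrm{ev}_{U_s})\boxtimes\iota$ and $\mathrm{coev}_{U_s\boxtimes g_s}=\mathrm{coev}_{U_s}\boxtimes\iota$. Combining this with monoidality of $\alpha^{g_sh}$, which commutes with partial traces as $\tr_{U_s}\circ\alpha^{g_sh}=\alpha^{g_sh}\circ\tr_{\alpha^{(g_sh)^{-1}}(U_s)}$, yields
\begin{equation*}
P^{t,\alpha}_s(F(\eta))_{X\boxtimes h} = \alpha^h\bigl(P_{s'}(\tilde\eta)_{\alpha^{h^{-1}}(X)}\bigr)\boxtimes\iota, \quad \text{where } U_{s'}=\alpha^{(g_sh)^{-1}}(U_s).
\end{equation*}

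Comparing with $F(\eta)_{X\boxtimes h}=\alpha^h(\tilde\eta_{\alpha^{h^{-1}}(X)})\boxtimes\iota$, the equality $P^{t,\alpha}_s(F(\eta))=F(\eta)$ at $X\boxtimes h$ amounts to $P_{s'}(\tilde\eta)_Y=\tilde\eta_Y$ for all $Y\in\CC_h$. As $(s,h)$ varies over $\Irr(\CC)\times\Gamma$, the pair $(s',h)$ exhausts the same set (given $(t,h)$ one can take $U_s=\alpha^{g_th}(U_t)$, with $g_s=g_t$ since $\alpha$ preserves the grading). Hence $F(\eta)$ is absolutely harmonic in $\widehat{\CC^{t,\alpha}}$ if and only if $\tilde\eta$ is absolutely harmonic in $\hat\CC$, which proves the first assertion.

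For the Poisson boundary statement, note that the weak action and the grading of $\hat\CC$ restrict to $\PP(\CC)$ because $\alpha^g$ intertwines the Markov operators up to the induced permutation of $\Irr(\CC)$; thus $\PP(\CC)^{t,\alpha}$ is defined as a full subcategory of ${\hat\CC\,}^{t,\alpha}$. By the first assertion, $F$ restricts to a strict tensor isomorphism from $\PP(\CC)^{t,\alpha}$ onto the full subcategory of $\widehat{\CC^{t,\alpha}}$ of absolutely harmonic elements. Since $\CC^{t,\alpha}$ is weakly amenable by Proposition~\ref{prop:wk-amen-perm-cross-prod}, closing both subcategories under finite direct sums and subobjects yields the desired isomorphism with the absolute Poisson boundary of~$\CC^{t,\alpha}$.
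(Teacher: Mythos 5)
Your proposal is correct and follows essentially the same route as the paper's proof: both compute $P^{t,\alpha}_s(F(\eta))_{X\boxtimes h}$ using the identity $(\tr_{U_s\boxtimes g_s}\otimes\iota)(\tilde T\boxtimes\iota)=\alpha^{g_s^{-1}}((\tr_{U_s}\otimes\iota)(\tilde T))\boxtimes\iota$ together with the compatibility of $\alpha^{g_sh}$ with partial traces, arrive at the same formula $\alpha^h(P_{(g_sh)^{-1}s}(\tilde\eta)_{\alpha^{h^{-1}}(X)})\boxtimes\iota$, and conclude by noting that the $\Gamma$-action on $\Irr(\CC)$ preserves degrees. Your explicit justification of the partial trace identity via the duals $(U_s\boxtimes g_s)^*=\alpha^{g_s^{-1}}(\bar U_s)\boxtimes g_s^{-1}$ is a small elaboration of a step the paper leaves as an observation.
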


\bp
Take $\eta=\tilde\eta\boxtimes\iota\in {\hat\CC\,}^{t,\alpha}(U\boxtimes g,V\boxtimes g)=\hat\CC(U,V)\boxtimes\iota$. We have to show that $\tilde\eta$ is absolutely harmonic if and only if $F(\eta)$ is absolutely harmonic. For this, take $Y=U_s$ and let $k$ be the degree of $Y$. We will write $P^{t,\alpha}_s$ for the Markov operator on $\widehat{\CC^{t,\alpha}}$ defined by $Y\boxtimes k$. Then for $X\in\CC_h$ we have
$$
P^{t,\alpha}_s(F(\eta))_{X\boxtimes h}=(\tr_{Y\boxtimes k}\otimes \iota)(F(\eta)_{(Y\boxtimes k)\otimes(X\boxtimes h)})
=(\tr_{Y\boxtimes k}\otimes \iota)(\alpha^{kh}(\tilde\eta_{\alpha^{(kh)^{-1}}(Y\otimes\alpha^k(X))})\boxtimes\iota).
$$
Now observe that if $T=\tilde T\boxtimes\iota\in\CC^{t,\alpha}((Y\boxtimes k)\otimes(W\boxtimes l))=\CC(Y\otimes\alpha^k(W))\boxtimes\iota$, then
$$
(\tr_{Y\boxtimes k}\otimes\iota)(T)=\alpha^{k^{-1}}(\tr_Y\otimes\iota)(\tilde T)\boxtimes\iota.
$$
It follows that
\begin{align*}
P^{t,\alpha}_s(F(\eta))_{X\boxtimes h}
&=\alpha^{k^{-1}}(\tr_{Y}\otimes \iota)(\alpha^{kh}(\tilde\eta_{\alpha^{(kh)^{-1}}(Y\otimes\alpha^k(X))}))\boxtimes\iota\\
&=\alpha^{h}(\tr_{\alpha^{(kh)^{-1}}(Y)}\otimes \iota)(\alpha^{kh}(\tilde\eta_{\alpha^{(kh)^{-1}}(Y)\otimes\alpha^{h^{-1}}(X))}))\boxtimes\iota\\
&=\alpha^h(P_{(kh)^{-1}s}(\tilde\eta)_{\alpha^{h^{-1}}(X)})\boxtimes\iota.
\end{align*}
As $F(\eta)_{X\boxtimes h}=\alpha^h(\tilde\eta_{\alpha^{h^{-1}}(X)})\boxtimes\iota$, we therefore see that $F(\eta)$ is absolutely harmonic if and only~if
$$
\tilde\eta_{\alpha^{h^{-1}}(X)}=P_{(kh)^{-1}s}(\tilde\eta)_{\alpha^{h^{-1}}(X)}
$$
for all $h,k\in\Gamma$, $X\in\CC_h$ and $s\in\Irr(\CC)$ of degree $k$. As the action of $\Gamma$ preserves the degree, this is equivalent to absolute harmonicity of $\eta$.

\smallskip

Assume now that $\CC$ is weakly amenable. By Proposition~\ref{prop:wk-amen-perm-cross-prod}(i) we know that $\CC^{t,\alpha}$ is also weakly amenable, so it has a well-defined absolute Poisson boundary. As $F$ defines a complete order isomorphism between the spaces of absolutely harmonic elements, it respects the composition rule for such elements, hence it defines an isomorphism between $\CC^{t,\alpha}\to\PP(\CC)^{t,\alpha}$ and the absolute Poisson boundary of~$\CC^{t,\alpha}$.
\ep

\begin{remark}
If $G$ is a coamenable compact quantum group, then it is known that the absolute Poisson boundary of $\Rep G$ is the forgetful functor $\Rep G\to\Rep K$, where $K\subset G$ is the maximal quantum subgroup of Kac type~\citelist{\cite{MR2335776}\cite{MR3291643}}. Applying the above theorem in this case we conclude that the maximal quantum subgroup of $G^{t,\alpha}$ of Kac type is $K^{t,\bar\alpha}$, where $\bar\alpha$ is the action of~$\Gamma$ on $K$ induced by~$\alpha$. This conclusion, however, is true for any $G$ and it does not require the Poisson boundary theory. Indeed, one of the equivalent ways of defining $\Pol(K)$ is as the quotient of $\Pol(G)$ by the ideal $I$ generated by the elements $a-S^2(a)$. This ideal is $\Gamma$-graded and $\Gamma$-stable, so, as discussed in Section~\ref{ssec:quotients}, we get a Hopf ideal $j(I)\subset\Pol(G^{t,\alpha})$. By working with homogeneous components it is easy to see that $j(I)$ is again the ideal generated by $a-S^2(a)$, so for the maximal quantum subgroup $\tilde K\subset G^{t,\alpha}$ of Kac type we get $\Pol(\tilde K)=\Pol(G^{t,\alpha})/j(I)=(\Pol(G)/I)^{t,\bar\alpha}=\Pol(K^{t,\bar\alpha})$.
\end{remark}

For completeness, let us also briefly consider Poisson boundaries of crossed products. For $g\in\Gamma$ define a map
$F_g\colon \hat\CC(U,V)\to \widehat{\CC\rtimes\Gamma}(U\boxtimes g, V\boxtimes g)$ by
$$
F_g(\eta)_{X\boxtimes h}=\alpha^h(\eta_{\alpha^{h^{-1}}(X)})\boxtimes\iota\colon (X\otimes\alpha^h(U))\boxtimes hg\to (X\otimes\alpha^h(V))\boxtimes hg.
$$
Using these maps we easily get the following result.

\begin{proposition}
For any $g\in\Gamma$ and $U,V\in\CC_g$, the map $F_g$ defines a bijection between the absolutely harmonic elements in $\hat\CC(U,V)$ and in $\widehat{\CC\rtimes\Gamma}(U\boxtimes g, V\boxtimes g)$. Furthermore, if $\CC$ is weakly amenable (so that $\CC\rtimes\Gamma$ is also weakly amenable), then $F_e$ defines a unitary monoidal equivalence between $\PP(\CC)$ and the full subcategory of $\PP(\CC\rtimes\Gamma)$ formed by the objects $X\boxtimes e$, which then extends to a unitary monoidal equivalence between $\PP(\CC)\rtimes\Gamma$ and $\PP(\CC\rtimes\Gamma)$.
\end{proposition}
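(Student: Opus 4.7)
My plan is to mirror the proof of Theorem~\ref{thm:twist-poisson}, with the semidirect product $\CC\rtimes\Gamma$ now playing the role that the graded twisting $\CC^{t,\alpha}$ played there. As in that proof, I will assume that $\CC$ and the action $\alpha$ are strict. The first step is to compute the action of the Markov operators $P^{\rtimes}_{s,k}$ on $\widehat{\CC\rtimes\Gamma}$, indexed by the simple objects $U_s\boxtimes k$, on an element $F_g(\eta)$. Using the partial trace identity $(\tr_{Y\boxtimes k}\otimes\iota)(\tilde T\boxtimes\iota)=\alpha^{k^{-1}}(\tr_Y\otimes\iota)(\tilde T)\boxtimes\iota$ established in the proof of Theorem~\ref{thm:twist-poisson}, a calculation strictly parallel to the one carried out there yields
$$
P^{\rtimes}_{s,k}(F_g(\eta))_{X\boxtimes h}=\alpha^h\bigl(P_{\alpha^{(kh)^{-1}}(s)}(\eta)_{\alpha^{h^{-1}}(X)}\bigr)\boxtimes\iota.
$$
Because $\alpha^{(kh)^{-1}}$ permutes $\Irr(\CC)$ and $(k,h)$ ranges freely over $\Gamma\times\Gamma$, the equation $P^{\rtimes}_{s,k}(F_g(\eta))=F_g(\eta)$ for all $(s,k)$ is equivalent to $P_r(\eta)=\eta$ for all $r\in\Irr(\CC)$, which shows that $F_g$ sends absolutely harmonic elements to absolutely harmonic ones and that the preimage of an absolutely harmonic morphism under $F_g$ is again absolutely harmonic.

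To complete the bijection, injectivity of $F_g$ is immediate from $F_g(\eta)_{X\boxtimes e}=\eta_X\boxtimes\iota$. For surjectivity I write an absolutely harmonic $\mu\in\widehat{\CC\rtimes\Gamma}(U\boxtimes g,V\boxtimes g)$ as $\mu_{X\boxtimes h}=\tilde\mu_{X,h}\boxtimes\iota$ and exploit harmonicity at the invertible simples $\un\boxtimes k$: with $(\tr_{\un\boxtimes k}\otimes\iota)(\tilde T\boxtimes\iota)=\alpha^{k^{-1}}(\tilde T)\boxtimes\iota$, the identity $\mu=P^{\rtimes}_{e,k}(\mu)$ forces $\alpha^k(\tilde\mu_{X,h})=\tilde\mu_{\alpha^k(X),kh}$; specializing $h=e$ gives $\tilde\mu_{X,h}=\alpha^h(\tilde\mu_{\alpha^{h^{-1}}(X),e})$, so $\mu=F_g(\tilde\nu)$ where $\tilde\nu\in\hat\CC(U,V)$ has components $\tilde\nu_X=\tilde\mu_{X,e}$. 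The naturality of $\tilde\nu$ is inherited from that of $\mu$ under morphisms of the form $f\boxtimes\iota_e$, and its absolute harmonicity follows from the first step.

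For the second half, unitarity of $F_e$ is visible from the formula, and monoidality reduces to the direct identity $F_e(\eta_1\otimes\eta_2)=F_e(\eta_1)\otimes F_e(\eta_2)$, verified in the same spirit as the monoidality of $F\colon\hat\CC^{t,\alpha}\to\widehat{\CC^{t,\alpha}}$ in the lemma preceding Theorem~\ref{thm:twist-poisson}. Combined with the bijection this identifies $\PP(\CC)$, through $F_e$, with the full subcategory of $\PP(\CC\rtimes\Gamma)$ on objects $X\boxtimes e$. For the extension to $\PP(\CC)\rtimes\Gamma$, I will use that the invertible objects $\un\boxtimes g$ sit in $\PP(\CC\rtimes\Gamma)$ as absolutely harmonic simples, that the $\Gamma$-grading on $\CC\rtimes\Gamma$ passes to $\PP(\CC\rtimes\Gamma)$ (since the Markov operators preserve homogeneous components), and that every simple object in $\PP(\CC\rtimes\Gamma)$ of degree $g$ is isomorphic to $(X\boxtimes e)\otimes(\un\boxtimes g)$ with $X$ simple in $\PP(\CC)$; these ingredients assemble into the desired equivalence $\PP(\CC)\rtimes\Gamma\simeq\PP(\CC\rtimes\Gamma)$. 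The main delicate point is the surjectivity step, where harmonicity must be exploited specifically at the invertible simples $\un\boxtimes k$; all remaining steps are essentially direct transcriptions of the reasoning already developed for the graded twisting case.
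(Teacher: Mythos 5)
Your proposal is correct and follows exactly the route the paper intends: the paper gives no written proof beyond ``using these maps we easily get the following result,'' the implicit argument being a transcription of the computation in Theorem~\ref{thm:twist-poisson}, which is what you carry out. You also correctly identify and handle the one point that does not follow by pure transcription, namely surjectivity of $F_g$ onto the harmonic elements, which indeed requires exploiting harmonicity at the invertible simples $\un\boxtimes k$ to force the equivariance relation $\tilde\mu_{X,h}=\alpha^h(\tilde\mu_{\alpha^{h^{-1}}(X),e})$ (the paper's subsequent remark offers an alternative, computation-free proof of the equivalence $\PP(\CC)\rtimes\Gamma\simeq\PP(\CC\rtimes\Gamma)$ via universality of the Poisson boundary as an amenable functor).
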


\begin{remark}
The equivalence between $\PP(\CC)\rtimes\Gamma$ and $\PP(\CC\rtimes\Gamma)$ can be established even without the minimal amount of computations indicated above. Namely, one of the main properties of the absolute Poisson boundary $\CC\to\PP(\CC)$ is that this is a universal \emph{amenable} functor~\cite{arXiv:1405.6572}. Therefore, by this universality, the natural embedding $\CC \to \CC \rtimes \Gamma \to \PP(\CC\rtimes\Gamma)$ induces an essentially unique tensor functor $\PP(\CC) \to \PP(\CC\rtimes\Gamma)$ (which is of course the functor~$F_e$). We also have a canonical embedding of $\underline{\Gamma}$ into $\PP(\CC\rtimes\Gamma)$. These two embeddings combine to give a tensor functor $\PP(\CC)\rtimes \Gamma \to \PP(\CC\rtimes\Gamma)$.  Conversely, by Proposition~\ref{prop:wk-amen-perm-cross-prod}, $\PP(\CC)\rtimes \Gamma$ is amenable. Moreover, the embedding $\CC \to \PP(\CC) \rtimes \Gamma$ induces a tensor functor $\CC\rtimes\Gamma \to \PP(\CC)\rtimes\Gamma$. Again by the universality, we obtain a tensor functor $\PP(\CC \rtimes \Gamma) \to \PP(\CC)\rtimes \Gamma$. The tensor functors between $\PP(\CC\rtimes \Gamma)$ and $\PP(\CC)\rtimes\Gamma$ are identities on $\CC$ and $\underline{\Gamma}$, hence by the universality they are equivalences.

When $\Gamma$ is abelian, similar arguments provide an alternative route to Theorem~\ref{thm:twist-poisson}. Namely, we have an embedding of $\CC^{t,\alpha}$ into $\PP(\CC)^{t,\alpha}$, which induces a unitary tensor functor $\Xi_\alpha\colon\PP(\CC^{t,\alpha})\to\PP(\CC)^{t,\alpha}$. Using the commutativity of $\Gamma$, we can `untwist' $\CC^{t,\alpha}$ and go back to $\CC$. Then the universality implies that $\Xi_\alpha$ is a unitary monoidal equivalence.
\end{remark}

\begin{bibdiv}
\begin{biblist}

\bib{MR3169407}{article}{
    AUTHOR = {Agore, A. L.},
     TITLE = {Crossed product of {H}opf algebras},
   JOURNAL = {Comm. Algebra},
    VOLUME = {41},
      YEAR = {2013},
    NUMBER = {7},
     PAGES = {2519\ndash2542},
      ISSN = {0092-7872},
  review={\MR{3169407}},
       URL = {http://dx.doi.org/10.1080/00927872.2012.660261},
}

\bib{MR2802546}{article}{
   author={Agore, A. L.},
   author={Militaru, G.},
   title={Extending structures II: The quantum version},
   journal={J. Algebra},
   volume={336},
   date={2011},
   pages={321--341},
   issn={0021-8693},
   review={\MR{2802546}},
}

\bib{MR1045737}{article}{
      author={Alev, J.},
      author={Hodges, T.~J.},
      author={Velez, J.-D.},
       title={Fixed rings of the {W}eyl algebra {$A_1({\bf C})$}},
        date={1990},
        ISSN={0021-8693},
     journal={J. Algebra},
      volume={130},
      number={1},
       pages={83\ndash 96},
         url={http://dx.doi.org/10.1016/0021-8693(90)90101-S},
      review={\MR{1045737 (91c:16037)}},
}

\bib{MR1334152}{article}{
      author={Andruskiewitsch, N.},
      author={Devoto, J.},
       title={Extensions of {H}opf algebras},
        date={1995},
        ISSN={0234-0852},
     journal={Algebra i Analiz},
      volume={7},
      number={1},
       pages={22\ndash 61},
        note={translation in St. Petersburg Math. J. \textbf{7} (1996), no. 1,
  17\ndash 52},
      review={\MR{1334152 (96f:16044)}},
}

\bib{MR1954457}{article}{
      author={Arkhipov, Sergey},
      author={Gaitsgory, Dennis},
       title={Another realization of the category of modules over the small
  quantum group},
        date={2003},
        ISSN={0001-8708},
     journal={Adv. Math.},
      volume={173},
      number={1},
       pages={114\ndash 143},
         url={http://dx.doi.org/10.1016/S0001-8708(02)00016-6},
      review={\MR{1954457 (2004e:17010)}},
}

\bib{MR2097019}{article}{
      author={Baumg{{\"a}}rtel, Hellmut},
      author={Lled{{\'o}}, Fernando},
       title={Duality of compact groups and {H}ilbert {$C^*$}-systems for
  {$C^*$}-algebras with a nontrivial center},
        date={2004},
        ISSN={0129-167X},
     journal={Internat. J. Math.},
      volume={15},
      number={8},
       pages={759\ndash 812},
         url={http://dx.doi.org/10.1142/S0129167X04002545},
         doi={10.1142/S0129167X04002545},
      review={\MR{2097019 (2006d:22008)}},
}

\bib{MR1386496}{article}{
    AUTHOR = {Beattie, M.},
author={Chen, C.-Y.},
author={Zhang, J. J.},
     TITLE = {Twisted {H}opf comodule algebras},
date={1996},
    ISSN = {0092-7872},
JOURNAL = {Comm. Algebra},
    VOLUME = {24},
      YEAR = {1996},
    NUMBER = {5},
     PAGES = {1759\ndash 1775},
 URL = {http://dx.doi.org/10.1080/00927879608825669},
      review={\MR{1386496 (97f:16060)}}
}

\bib{MR3580173}{article}{
      author={Bichon, Julien},
      author={Neshveyev, Sergey},
      author={Yamashita, Makoto},
       title={Graded twisting of categories and quantum groups by group
  actions},
        date={2016},
        ISSN={0373-0956},
     journal={Ann. Inst. Fourier (Grenoble)},
      volume={66},
      number={6},
       pages={2299\ndash 2338},
         url={http://aif.cedram.org/item?id=AIF_2016__66_6_2299_0},
      review={\MR{3580173}},
}

\bib{MR0148665}{incollection}{
      author={Cartier, P.},
       title={Groupes alg\'ebriques et groupes formels},
        date={1962},
   booktitle={Colloq. {T}h\'eorie des {G}roupes {A}lg\'ebriques ({B}ruxelles,
  1962)},
   publisher={Librairie Universitaire, Louvain; GauthierVillars, Paris},
       pages={87\ndash 111},
      review={\MR{0148665}},
}

\bib{arXiv:1501.07881}{misc}{
      author={Ceken, Secil},
      author={Palmieri, John~H.},
      author={Wang, Yanhua},
      author={Zhang, James},
       title={Invariant theory for quantum {W}eyl algebras under finite group
  action},
        date={2015},
         how={preprint},
      eprint={\href{http://arxiv.org/abs/1501.07881}{{\tt arXiv:1501.07881
  [math.RA]}}},
}

\bib{arXiv:1303.7203}{article}{
      author={Chan, Kenneth},
      author={Kirkman, Ellen},
      author={Walton, Chelsea},
      author={Zhang, James},
       title={Quantum binary polyhedral groups and their actions on quantum
  planes},
         journal={J. Reine Angew. Math.},
volume={719},
        year={2016},
      pages={211\ndash252},
        review={\MR{3552496}},
}

\bib{MR3119216}{article}{
author={Chan, Kenneth},
author={Walton, Chelsea},
author={Wang, Yanhua},
author={ Zhang, James},
title={Hopf actions on filtered regular algebras},
 journal= {J. Algebra},
 volume={397},
year={2014},
 pages={68\ndash90},
      ISSN={0021-8693},
  review={\MR{3119216}},
       URL = {http://dx.doi.org/10.1016/j.jalgebra.2013.09.002},
}

\bib{MR3263140}{article}{
      author={Chirvasitu, Alexandru},
       title={Cosemisimple {H}opf algebras are faithfully flat over {H}opf
  subalgebras},
        date={2014},
        ISSN={1937-0652},
     journal={Algebra Number Theory},
      volume={8},
      number={5},
       pages={1179\ndash 1199},
         url={http://dx.doi.org/10.2140/ant.2014.8.1179},
      review={\MR{3263140}},
}

\bib{MR3121622}{article}{
      author={De~Commer, Kenny},
      author={Yamashita, Makoto},
       title={Tannaka-{K}re\u\i n duality for compact quantum homogeneous
  spaces. {I}. {G}eneral theory},
        date={2013},
        ISSN={1201-561X},
     journal={Theory Appl. Categ.},
      volume={28},
       pages={No. 31, 1099\ndash 1138},
      eprint={\href{http://arxiv.org/abs/1211.6552}{{\tt arXiv:1211.6552
  [math.OA]}}},
      review={\MR{3121622}},
}

\bib{MR1604452}{article}{
      author={tom Dieck, Tammo},
       title={Categories of rooted cylinder ribbons and their representations},
        date={1998},
        ISSN={0075-4102},
     journal={J. Reine Angew. Math.},
      volume={494},
       pages={35\ndash 63},
         url={http://dx.doi.org/10.1515/crll.1998.010},
      review={\MR{1604452}},
}

\bib{MR1198206}{article}{
      author={Doi, Yukio},
       title={Unifying {H}opf modules},
        date={1992},
        ISSN={0021-8693},
     journal={J. Algebra},
      volume={153},
      number={2},
       pages={373\ndash 385},
         url={http://dx.doi.org/10.1016/0021-8693(92)90160-N},
      review={\MR{1198206 (94c:16048)}},
}

\bib{MR2735754}{article}{
   author={Etingof, Pavel},
   author={Nikshych, Dmitri},
   author={Ostrik, Victor},
   title={Weakly group-theoretical and solvable fusion categories},
   journal={Adv. Math.},
   volume={226},
   date={2011},
   number={1},
   pages={176--205},
   issn={0001-8708},
   review={\MR{2735754}},
}

\bib{MR3242743}{book}{
      author={Etingof, Pavel},
      author={Gelaki, Shlomo},
      author={Nikshych, Dmitri},
      author={Ostrik, Victor},
       title={Tensor categories},
      series={Mathematical Surveys and Monographs},
   publisher={American Mathematical Society, Providence, RI},
        date={2015},
      volume={205},
        ISBN={978-1-4704-2024-6},
         url={http://dx.doi.org/10.1090/surv/205},
      review={\MR{3242743}},
}

\bib{MR2763944}{article}{
   author={Galindo, C{\'e}sar},
   title={Clifford theory for tensor categories},
   journal={J. Lond. Math. Soc. (2)},
   volume={83},
   date={2011},
   number={1},
   pages={57--78},
   issn={0024-6107},
   review={\MR{2763944}},
}

\bib{arXiv:1604.01679}{misc}{
      author={Galindo, C{\'e}sar},
       title={Coherence for monoidal $G$-categories and braided $G$-crossed categories},
         how={preprint},
        date={2016},
      eprint={\href{http://arxiv.org/abs/1604.01679}{{\tt arXiv:1604.01679
  [math.QA]}}},
}

\bib{MR1176901}{article}{
      author={Goodearl, K.~R.},
       title={Prime ideals in skew polynomial rings and quantized {W}eyl
  algebras},
        date={1992},
        ISSN={0021-8693},
     journal={J. Algebra},
      volume={150},
      number={2},
       pages={324\ndash 377},
         url={http://dx.doi.org/10.1016/S0021-8693(05)80036-5},
      review={\MR{1176901 (93h:16051)}},
}

\bib{MR1824165}{article}{
      author={H{{\"a}}ring-Oldenburg, Reinhard},
       title={Actions of tensor categories, cylinder braids and their
  {K}auffman polynomial},
        date={2001},
        ISSN={0166-8641},
     journal={Topology Appl.},
      volume={112},
      number={3},
       pages={297\ndash 314},
         url={http://dx.doi.org/10.1016/S0166-8641(00)00006-7},
      review={\MR{1824165}},
}

\bib{MR1644299}{article}{
      author={Hiai, Fumio},
      author={Izumi, Masaki},
       title={Amenability and strong amenability for fusion algebras with
  applications to subfactor theory},
        date={1998},
        ISSN={0129-167X},
     journal={Internat. J. Math.},
      volume={9},
      number={6},
       pages={669\ndash 722},
         url={http://dx.doi.org/10.1142/S0129167X98000300},
      review={\MR{1644299 (99h:46116)}},
}

\bib{MR733774}{article}{
      author={McMullen, John~R.},
       title={On the dual object of a compact connected group},
        date={1984},
        ISSN={0025-5874},
     journal={Math. Z.},
      volume={185},
      number={4},
       pages={539\ndash 552},
         url={http://dx.doi.org/10.1007/BF01236263},
         doi={10.1007/BF01236263},
      review={\MR{733774 (85e:22010)}},
}

\bib{MR1243637}{book}{
      author={Montgomery, Susan},
       title={Hopf algebras and their actions on rings},
      series={CBMS Regional Conference Series in Mathematics},
   publisher={Published for the Conference Board of the Mathematical Sciences,
  Washington, DC; by the American Mathematical Society, Providence, RI},
        date={1993},
      volume={82},
        ISBN={0-8218-0738-2},
      review={\MR{1243637 (94i:16019)}},
}


\bib{MR2130607}{article}{
      author={M{\"u}ger, Michael},
       title={On the center of a compact group},
        date={2004},
        ISSN={1073-7928},
     journal={Int. Math. Res. Not.},
      number={51},
       pages={2751\ndash 2756},
      eprint={\href{http://arxiv.org/abs/math/0312257}{{\tt arXiv:math/0312257
  [math.GR]}}},
         url={http://dx.doi.org/10.1155/S1073792804133850},
         doi={10.1155/S1073792804133850},
      review={\MR{2130607 (2005m:22003)}},
}

\bib{MR676974}{book}{
   author={N{\u{a}}st{\u{a}}sescu, C.},
   author={van Oystaeyen, F.},
   title={Graded ring theory},
   series={North-Holland Mathematical Library},
   volume={28},
   publisher={North-Holland Publishing Co., Amsterdam-New York},
   date={1982},
   pages={ix+340},
   isbn={0-444-86489-X},
   review={\MR{676974}},
}

\bib{MR3426224}{article}{
      author={Neshveyev, Sergey},
       title={Duality theory for nonergodic actions},
        date={2014},
        ISSN={1867-5778},
     journal={M{\"u}nster J. Math.},
      volume={7},
      number={2},
       pages={413\ndash 437},
      review={\MR{3426224}},
}

\bib{MR3291643}{article}{
      author={Neshveyev, Sergey},
      author={Yamashita, Makoto},
       title={Categorical duality for {Y}etter-{D}rinfeld algebras},
        date={2014},
        ISSN={1431-0635},
     journal={Doc. Math.},
      volume={19},
       pages={1105\ndash 1139},
      review={\MR{3291643}},
}

\bib{arXiv:1405.6572}{misc}{
      author={Neshveyev, Sergey},
      author={Yamashita, Makoto},
       title={Poisson boundaries of monoidal categories},
         how={preprint},
        date={2014},
      eprint={\href{http://arxiv.org/abs/1405.6572}{{\tt arXiv:1405.6572
  [math.OA]}}},
  note={to appear in Ann. Sci. \'Ecole Norm. Sup.}
}

\bib{MR1976459}{article}{
      author={Ostrik, Victor},
       title={Module categories, weak {H}opf algebras and modular invariants},
        date={2003},
        ISSN={1083-4362},
     journal={Transform. Groups},
      volume={8},
      number={2},
       pages={177\ndash 206},
         url={http://dx.doi.org/10.1007/s00031-003-0515-6},
      review={\MR{1976459 (2004h:18006)}},
}

\bib{MR0437582}{article}{
      author={Radford, David~E.},
       title={Pointed {H}opf algebras are free over {H}opf subalgebras},
        date={1977},
        ISSN={0021-8693},
     journal={J. Algebra},
      volume={45},
      number={2},
       pages={266\ndash 273},
         url={http://dx.doi.org/10.1016/0021-8693(77)90326-X},
         doi={10.1016/0021-8693(77)90326-X},
      review={\MR{0437582}},
}

\bib{MR1761130}{incollection}{
      author={Schauenburg, Peter},
       title={Faithful flatness over {H}opf subalgebras: counterexamples},
        date={2000},
   booktitle={Interactions between ring theory and representations of algebras
  ({M}urcia)},
      series={Lecture Notes in Pure and Appl. Math.},
      volume={210},
   publisher={Dekker, New York},
       pages={331\ndash 344},
      review={\MR{1761130 (2001d:16061)}},
}

\bib{MR1228767}{article}{
      author={Schneider, Hans-J\"{u}rgen},
       title={Some remarks on exact sequences of quantum groups},
        date={1993},
        ISSN={0092-7872},
     journal={Comm. Algebra},
      volume={21},
      number={9},
       pages={3337\ndash 3357},
         url={http://dx.doi.org/10.1080/00927879308824733},
      review={\MR{1228767 (94e:17026)}},
}


\bib{MR0321963}{article}{
      author={Takeuchi, Mitsuhiro},
       title={A correspondence between {H}opf ideals and sub-{H}opf algebras},
        date={1972},
        ISSN={0025-2611},
     journal={Manuscripta Math.},
      volume={7},
       pages={251\ndash 270},
      review={\MR{0321963 (48 \#328)}},
}

\bib{MR1432363}{article}{
      author={Takeuchi, Mitsuhiro},
       title={Cocycle deformations of coordinate rings of quantum matrices},
        date={1997},
        ISSN={0021-8693},
     journal={J. Algebra},
      volume={189},
      number={1},
       pages={23\ndash 33},
         url={http://dx.doi.org/10.1006/jabr.1996.6878},
      review={\MR{1432363 (97m:16077)}},
}

\bib{MR1815142}{article}{
      author={Tambara, Daisuke},
       title={Invariants and semi-direct products for finite group actions on
  tensor categories},
        date={2001},
        ISSN={0025-5645},
     journal={J. Math. Soc. Japan},
      volume={53},
      number={2},
       pages={429\ndash 456},
         url={http://dx.doi.org/10.2969/jmsj/05320429},
      review={\MR{1815142 (2002e:18010)}},
}

\bib{MR2335776}{article}{
      author={Tomatsu, Reiji},
       title={A characterization of right coideals of quotient type and its
  application to classification of {P}oisson boundaries},
        date={2007},
        ISSN={0010-3616},
     journal={Comm. Math. Phys.},
      volume={275},
      number={1},
       pages={271\ndash 296},
      eprint={\href{http://arxiv.org/abs/math/0611327}{{\tt arXiv:math/0611327
  [math.OA]}}},
         url={http://dx.doi.org/10.1007/s00220-007-0267-6},
         doi={10.1007/s00220-007-0267-6},
      review={\MR{2335776 (2008j:46058)}},
}

\bib{MR547117}{book}{
      author={Waterhouse, William~C.},
       title={Introduction to affine group schemes},
      series={Graduate Texts in Mathematics},
   publisher={Springer-Verlag, New York-Berlin},
        date={1979},
      volume={66},
        ISBN={0-387-90421-2},
      review={\MR{547117}},
}

\bib{MR1184424}{incollection}{
      author={Yetter, David~N.},
       title={Tangles in prisms, tangles in cobordisms},
        date={1992},
   booktitle={Topology '90 ({C}olumbus, {OH}, 1990)},
      series={Ohio State Univ. Math. Res. Inst. Publ.},
      volume={1},
   publisher={de Gruyter, Berlin},
       pages={399\ndash 443},
      review={\MR{1184424}},
}

\bib{MR1367080}{article}{
    AUTHOR = {Zhang, J. J.},
     TITLE = {Twisted graded algebras and equivalences of graded categories},
   JOURNAL = {Proc. London Math. Soc.},
    VOLUME = {72},
      YEAR = {1996},
    NUMBER = {2},
     PAGES = {281--311},
      ISSN = {0024-6115},
  review={\MR{1367080}},
       URL = {http://dx.doi.org/10.1112/plms/s3-72.2.281},
}

\end{biblist}
\end{bibdiv}

\end{document}